\numberwithin{equation}{section}
\def\msquare{\mathord{\scalebox{0.4}[0.4]{\scalerel*{\Box}{\strut}}}}
\def\dx{\,\mathrm{d}\bx} 
\def\dS{\,\mathrm{d}S}
\def\dV{\,\mathrm{d}V}
\newcommand\REL[1]{{\mathfrak{Re}}\left({#1}\right)}
\newcommand\IMG[1]{\mathrm{Im}\left({#1}\right)}
\newcommand\DIM[1]{\mbox{dim}\left({#1}\right)}
\newcommand\RANK[1]{\mbox{rank}\left({#1}\right)}
\newcommand{\dfdt}[1]{\frac{\partial {#1}}{\partial t}}
\newcommand{\dfhdt}[1]{\frac{d {#1}}{d t}}
\newcommand{\EFC}[2]{\calC^{#1}\left({#2}\right)}
\newcommand{\ESOBOLEV}[2]{H^{#1}\left({#2}\right)}
\newcommand{\PartDer}[2]{\partial^{#1}_{#2}}
\newcommand{\DerA}[2]{D^{#1}{#2}}
\newcommand{\Taylor}[3]{T_{#1}^{#2}\left[#3\right]}
\newcommand{\AvTaylor}[2]{\calQ^{#1}\left[#2\right]}
\newcommand{\Conjugate}[1]{\overline{#1}}
\newcommand{\jump}[1]{\left[\!\left[#1\right]\!\right]}
\numberwithin{equation}{section}
\newcommand{\vnSpace}[1]{{{\vn}_{#1}^x}}
\newcommand{\vnTime}[1]{{n_{#1}^t}}
\newlength{\dhatheight}
\def\QFLUX{
    \settoheight{\dhatheight}{\ensuremath{\widehat{q_h}}}
    \addtolength{\dhatheight}{-0.40ex}
    \widehat{\vphantom{\rule{1pt}{\dhatheight}}
    \smash{\widehat{\nabla \psi}}}_{hp}
}
\newcommand{\mvec}[1]{{\vec{\mathbf{#1}}}}
\newcommand{\vn}{{\mvec n}}
\newcommand{\uu}[1]{\hbox{\boldmath$#1$}}
\newcommand{\Uu}[1]{{\mathbf{#1}}}
\newcommand{\bc}{{\Uu c}}
\newcommand{\bj}{{\boldsymbol j}}
\newcommand{\bjx}{{\boldsymbol j_\bx}}
\newcommand{\bC}{{\Uu C}}
\newcommand{\bD}{{\Uu D}}
\newcommand{\bG}{{\Uu G}}
\newcommand{\bP}{{\Uu P}}
\newcommand{\bS}{{\Uu S}}
\newcommand{\bT}{{\Uu T}}
\newcommand{\bV}{{\Uu V}}
\newcommand{\bzero}{\Uu{0}}
\newcommand{\calC}{{\mathcal C}}
\newcommand{\calD}{{\mathcal D}}
\newcommand{\calT}{{\mathcal T}}
\newcommand{\calE}{{\mathcal E}}
\newcommand{\calF}{{\mathcal F}}
\newcommand{\calO}{{\mathcal O}}
\newcommand{\calQ}{{\mathcal Q}}
\newcommand{\Fh}{\calF_h}
\newcommand{\Kx}{K_{\bx}}
\newcommand{\Th}{{\calT_h}}
\newcommand{\Thxn}{{\calT_{h_{\bx,n}}^{\bx}}}
\newcommand{\deK}{{\partial K}}
\newcommand{\GD}{{\Gamma_{\mathrm D}}}
\newcommand{\GN}{{\Gamma_{\mathrm N}}}
\newcommand{\GR}{{\Gamma_{\mathrm R}}}
\newcommand{\uhp}{{\psi_{hp}}}
\newcommand{\uhpT}{{\psi_{hp}^{-}}}
\newcommand{\shp}{{s_{hp}}}
\newcommand{\cs}{{\conj{s_{hp}}}}
\newcommand{\IN}{\mathbb{N}}
\newcommand{\IR}{\mathbb{R}}
\newcommand{\po}{\partial \Omega}
\newcommand{\gD}{{g_{\mathrm D}}}
\newcommand{\gN}{{g_{\mathrm N}}}
\newcommand{\OO}{{(\Omega)}}
\newcommand{\oon}{\;\text{on}\;}
\newcommand{\bx}{{\Uu x}}
\newcommand{\bn}{{\Uu n}}
\newcommand{\bb}{{\Uu b}}
\newcommand{\bN}{{\Uu N}}
\newcommand{\bM}{{\Uu M}}
\newcommand{\bz}{{\Uu z}}
\newcommand{\btau}{{\boldsymbol\tau}}		
\newcommand{\IC}{\mathbb{C}}
\newcommand{\IP}{\mathbb{P}}
\newcommand{\IT}{\mathbb{T}}
\newcommand{\IZ}{\mathbb{Z}}
\newcommand{\tfor}{\text{ for }}
\newcommand*{\jmp}[1]{[\![#1]\!]}                     
 \newcommand{\mvl}[1]{\left\{ \!\left\{#1\right\}\!\right\}}  
 \newcommand{\FT}{{\Fh^T}}
\newcommand{\FO}{{\Fh^0}}
\newcommand{\FD}{{\Fh^{\mathrm D}}}
\newcommand{\FN}{{\Fh^{\mathrm N}}}
\newcommand{\FR}{{\Fh^{\mathrm R}}}
\newcommand{\rtime}{{\mathrm{time}}}
\newcommand{\rspace}{{\mathrm{space}}}
\newcommand{\Fspa}{{\Fh^\rspace}}
\newcommand{\Ftime}{{\Fh^\rtime}}
\newcommand{\bVp}{{\IT_p}} 
\newcommand*{\conj}[1]{\overline{#1}}
\newcommand{\hp}{_{hp}}
\newcommand\hUhp{\widehat\psi\hp}
\DeclareMathOperator{\re}{{\mathfrak{Re}}} %
\DeclareMathOperator{\im}{{\mathfrak{Im}}} %
\DeclareMathOperator{\esssup}{ess\,sup}
\DeclareMathOperator{\essinf}{ess\,inf}
\DeclareMathOperator{\spn}{span}
\newcommand{\cbA}[2]{{{\mathcal{A}}}\left({#1};\ {#2}\right)}
\newcommand*{\N}[1]{\left\|#1\right\|}
\newcommand*{\abs}[1]{\left|#1\right|}
\newcommand{\Tnorm}[2]{|||#1|||_{#2}}
\newcommand{\MatrixNorm}[2]{|||#1|||_{#2}}
\newcommand*{\Norm}[2]{\left\|#1\right\|_{#2}}
\newcommand{\DG}{_{\mathrm{DG}}}
\newcommand{\DGp}{_{\mathrm{DG^+}}}
\newcommand{\di}{\,\mathrm{d}}                              
\newcommand*{\der}[2]{\frac{\partial #1}{\partial #2}}
\newcommand{\derp}[2]{{\frac{\partial #1}{\partial #2}}}
\newcommand{\cw}{{\conj w}}
\newcommand{\deO}{{\partial\Omega}}
\newcommand{\cS}{{\mathcal S}}
\newcommand{\ba}{{\Uu a}}
\newcommand{\bd}{{\Uu d}}
\newcommand{\bv}{{\Uu v}}
\newcommand{\ee}{{\rm e}}
\newcommand{\ri}{{\rm i}}
\newcommand{\mj}{{\boldsymbol{j}}}
\newcommand{\tr}{\mathrm{tr}}
\newcommand{\tand}{\text{ and }}
\title{A space--time Trefftz discontinuous Galerkin method for the linear 
Schr\"odinger equation
\thanks{\funding{A.\ Moiola acknowledges support from GNCS--INDAM, from PRIN project ``NA-FROM-PDEs'' and from MIUR through the ``Dipartimenti di Eccellenza'' Program (2018--2022) --- Dept.\ of Mathematics, University of Pavia.}}}
\author{Sergio G\'omez\thanks{Dipartimento di Matematica ``F.~Casorati'', Universit\`a di Pavia, 27100 Pavia, Italy.\newline
(\email{sergio.gomez01@universitadipavia.it}, \email{andrea.moiola@unipv.it}).}
\and Andrea Moiola\footnotemark[2]}
\begin{document}		
\maketitle

\begin{abstract}
A space--time Trefftz discontinuous Galerkin method for the Schr\"odinger equation with piecewise-constant potential is proposed and analyzed. 
Following the spirit of Trefftz methods, trial and test spaces are spanned by
non-polynomial complex wave functions that satisfy the Schr\"odinger equation locally on each element of the space--time 
mesh.
This allows {for a significant reduction in} the number of degrees of freedom in 
comparison with full polynomial spaces. 
We prove well-posedness and stability of the method, and, for the one- and 
two-dimensional cases, optimal, high-order, $h$-convergence error estimates in a 
skeleton norm.
Some numerical experiments validate the theoretical results presented.
\end{abstract}

\begin{keywords}
Linear Schr\"odinger equation; Trefftz method; discontinuous Galerkin method; 
\textit{a priori} error estimate; $h$-convergence; non-polynomial basis 
functions.
\end{keywords}

\begin{AMS}
  65M60, 78M10, 35Q41
\end{AMS}

\section{Introduction\label{SEC::INTRODUCTION}}

In this work we consider the following initial boundary value problem for the 
homogeneous, time-dependent Schr\"odinger equation on a space--time 
cylinder $Q = \Omega \times I$, where $\Omega$ is an open and 
bounded domain in $\IR^d$, $d\in\IN$, with Lipschitz boundary $\po$ and $I = 
(0, T)$, for 
some $T > 0$:
\begin{subequations}
\label{EQN::SCHRODINGER-EQUATION}
\begin{align}
i\dfdt{\psi} + \Delta \psi - V \psi & = 0, \quad \quad\; \mbox{ in }\ Q, 
\label{EQN::SCHRODINGER-EQUATION-1} \\
\psi & = \gD, \quad \ \   \mbox{ on }\ \po \times I,
\label{EQN::SCHRODINGER-EQUATION-2}\\
\psi(\bx, 0) & = \psi_0(\bx), \ \mbox{ on }\ \Omega.
\label{EQN::SCHRODINGER-EQUATION-3}
\end{align}
\end{subequations}
Here the Dirichlet boundary datum $\gD$ and the initial condition $\psi_0$ 
are given functions; $V: \Omega \rightarrow \IR$ is a piecewise-constant 
potential and the Laplacian operator $\Delta$ refers to the space 
variable $\bx$ only. 
Problem \cref{EQN::SCHRODINGER-EQUATION} is well-posed if, e.g., $\psi_0\in 
H^1_0\OO$ and $\gD=0$, by \cite[Chapter 3, Thm.~10.1, 
Rem.~10.2]{Lions_Magenes_1972};
in this case $\psi\in C^0(0,T; H^1_0\OO)\cap C^1(0,T; H^{-1}\OO)$.

The model \cref{EQN::SCHRODINGER-EQUATION} arises from a wide number of 
applications: it is the fundamental equation of quantum mechanics 
\cite{Lifshitz_Landau_1965}, in optics it is known as ``paraxial wave
equation'' and approximates the Helmholtz equation when 
the optical field acts mostly along one specific axis (Fresnel's approximation)
\cite{Grella_1982}, while in underwater acoustics it is called ``parabolic 
equation'' \cite{Keller_Papadakis_1977}.

The aim of this work is to propose and analyze a space--time Trefftz-DG
method 
for the numerical solution of \cref{EQN::SCHRODINGER-EQUATION}. The main feature 
of Trefftz methods is that they seek approximations in spaces 
spanned by local solutions of the partial differential equation considered. 
This typically requires non-polynomial basis functions. Trefftz schemes are 
mainly motivated by their significant reduction in the computational cost and 
number of degrees of freedom with respect to traditional polynomial 
approximations, and by their effectiveness in dealing with the intrinsic highly 
oscillatory behavior in the solution of certain problems.
On the other hand, Discontinuous Galerkin (DG) is a class of finite element 
methods that do not impose continuity in a strong sense on the approximation, 
making them specially suitable to be combined with Trefftz bases, which are 
naturally discontinuous.

Trefftz-DG methods have been successfully derived for many important 
equations; among others, the Helmholtz equation 
\cite{Gittelson_Hiptmair_Perugia_2009,Hiptmair_Moiola_Perugia_2016}, the time-harmonic and time-dependent Maxwell's equations  
\cite{Egger_Kretzchmar_Scnepp_Weiland_2015,Hiptmair_Moiola_Perugia_2013}, 
the second- and first-order formulations of the acoustic wave equation 
\cite{Banjai_Georgoulis_Lijoka_2017,Moiola_Perugia_2018}. However, to the 
best of our knowledge, this is the first attempt to 
study the application of a Trefftz-DG method to the linear Schr\"odinger 
equation.

The well-posedness and quasi-optimality error analysis of the Trefftz-DG scheme 
closely follows the analysis previously developed for the wave 
\cite[\S5.2]{Moiola_Perugia_2018} and the Helmholtz 
\cite[\S2.2.1]{Hiptmair_Moiola_Perugia_2016} equations.
This part admits the use of any discrete Trefftz space.
The chosen DG formulation admits general polytopic space meshes that 
are shape-regular, locally quasi-uniform {and aligned to the discontinuities in the potential $V$.}
Inverse estimates are not needed in the analysis: this is a strong advantage in 
comparison to other DG formulations (such as interior-penalty, see 
\cite{Banjai_Georgoulis_Lijoka_2017} for a Trefftz example) because 
 inverse estimates for non-polynomial discrete Trefftz spaces are in 
general hard to obtain (see \cite[\S3.2]{Gittelson_Hiptmair_Perugia_2009}).

Differently from the acoustic wave equation \cite{Moiola_Perugia_2018}, the presence of derivatives of different orders in \eqref{EQN::SCHRODINGER-EQUATION-1} prevents the existence of non-trivial polynomial solutions of the Schr\"odinger equation, so we construct Trefftz basis functions as simple complex exponentials \eqref{EQN::BASIS-FUNCTIONS}.
In order to establish convergence rates in the mesh size $h$, the key ingredient is the analysis of the approximation properties of carefully designed discrete spaces.
The key idea was introduced by O.~Cessenat and B.~Despr\'es in the proof of 
\cite[Thm.~3.7]{Cessenat_Despres_1998} (in the case of the ultra weak 
variational formulation (UWVF) applied to the Helmholtz equation): given any 
smooth PDE solution $\psi$, 
if the local discrete space contains an element with the same degree-$p$ Taylor polynomial of $\psi$, then the space enjoys the same $h$-approximation properties of the space $\IP^p$ of degree-$p$ polynomials.
We prove that this condition is satisfied by a simple discrete space $\IT^p$ in low space dimensions $d=1$ {and $d=2$ (we comment on the case $d\ge3$ in \Cref{Rem:3D})}.
Additional difficulties are due to the fact that we only assume Sobolev 
regularity of the PDE solution, so the Taylor polynomial has to be understood in 
an ``averaged'' sense.
It turns out that $\dim\IT^p\ll\dim\IP^p$: the Trefftz scheme {allows for} much faster 
convergence in terms of degrees of freedom than classical polynomial DG schemes, 
see \cref{Rem:LessDOFs}.
This approach to the Trefftz approximation theory is completely different from that 
used for the Helmholtz equation in \cite{Moiola_Hiptmair_Perugia_2011}, which is 
based on the use of an integral (Vekua) transform and circular wave expansions.
{A shortcoming of the Taylor-polynomial approach of \cite{Cessenat_Despres_1998} and the present paper, in contrast to the Vekua-transform technique of \cite{Moiola_Hiptmair_Perugia_2011}, is that the approximation analysis does not extend to the $p$-convergence case.
This is a difficult task that, in the context of Trefftz schemes, has been achieved for time-harmonic equations but not yet for the wave equation.
}

The paper is structured as follows: in \Cref{SECT::TREFFTZ-DG}
we introduce some standard notation and present the proposed Trefftz-DG method 
whose numerical fluxes are chosen as upwind in time and classical average in 
space with an appropriate complex penalization. 
In \Cref{SECT::WELL-POSEDNESS} we prove the well-posedness and quasi-optimality 
of the Trefftz-DG approximation for arbitrary dimensions and 
discrete Trefftz subspaces. 
\Cref{SECT::ERROR-ESTIMATE} is devoted to the error analysis for discrete 
subspaces spanned by complex exponentials 
satisfying the Schr\"odinger equation. In \Cref{SECT::GENERAL-APPROXIMATION} 
we 
present a condition which guarantees optimal local approximation of the exact 
solution in a general discrete Trefftz space; assuming that such 
condition 
is satisfied we prove an $h$-estimate in a mesh-skeleton norm in \Cref{SECT::DG-ERROR-ESTIMATE}.
In \Cref{SECT::APPROX-1D,SECT::APPROX-2D} 
we prove that the assumed condition is indeed true for the $(1 + 1)$ and $(2 + 
1)$ dimensional cases under some restrictions of the tuning parameters for our 
basis choice.
Some numerical experiments validating our theoretical results are 
presented in \Cref{SECT::NUMERICAL-EXPERIMENTS}. We propose some possible future 
extensions of the method and its analysis
in \Cref{SECT::CONCLUSIONS}.


\section{Trefftz-discontinuous Galerkin method\label{SECT::TREFFTZ-DG}}
\subsection{Mesh and DG notation}\label{S:Mesh}
Let the time interval $(0, T)$ be partitioned as
\begin{gather*}
 0\ =\ t_0\ <\ t_1\ <\ \ldots\ <\ t_N\ =\ T,  \\
I_n := (t_{n-1}, t_n),  \qquad h_n:= t_n - t_{n-1}, \qquad 
h_t: = \max\limits_{1 \leq n \leq N} h_n.
\end{gather*}
We denote the time-slabs $D_n:= \Omega \times I_n$.
For each $n =1, \ldots, N$, we assume to have a 
polytopic partition $\Thxn = \left\{\Kx\right\}$ of $\Omega$ such that for all 
$K_{\bx} \in \Thxn$, the restriction of the potential $V$ to $K_{\bx}$ is 
constant and
\[
h_{K_{\bx}}:= \mbox{diam}(K_{\bx}), \qquad
h_\bx:=\max_{K_\bx\in\Thxn,\; n=1,\ldots,N} h_{K_\bx}.
\]
Additionally we also assume that each $\Thxn$ satisfies the following 
properties:
\begin{itemize}
\item {\bf Shape-regularity}: there exists a number $\mathsf{sr}(\Th)>0$ such that 
$h_{\Kx}\le \rho_{\Kx}\mathsf{sr}(\Th)$ for all elements $\Kx\in\Thxn$, where $\rho_{K_\bx}$ is the radius of a $d$-dimensional ball contained in $K_{\bx}$.
\item {\bf Local quasi-uniformity in space}: there exists a number 
$\mathsf{lqu}(\Th)>0$ such that $h_{\Kx^1}\le h_{\Kx^2}\, \mathsf{lqu}(\Th)$ for 
all $n=1,\ldots,N$ and $K_\bx^1,K_\bx^2\in\Thxn$ such that $K_\bx^1\cap K_\bx^2$ 
has positive $(d-1)$-dimensional measure.
\end{itemize}
We define the space--time finite element mesh 
$$\Th(Q) := \Big\{K = \Kx \times I_n\ : \ \Kx\in \Thxn,\ n = 1, \ldots, N \Big\}.$$

Each internal mesh face $F$ (i.e.\ any $F=\deK_1\cap \deK_2$, for 
$K_1,K_2\in\Th(Q)$, with positive $d$-dimensional measure) is either
\begin{equation*}
\begin{cases}
\text{a space-like face:} & \text{if } F\subset\Omega\times\{t_n\}, \; \tfor 0< n< N, \text{ or}\\
\text{a time-like face:} & \text{if } F\subset \partial \left(\Kx^1\times 
I_n\right)  \cap\partial \left(\Kx^2\times I_n\right), 
\;\\& \qquad 
\tfor \Kx^1,\Kx^2\in\Thxn,\; 1\le n\le N.
\end{cases}
\end{equation*}
We denote the mesh skeleton and its parts as
\begin{align*}
\Fh :=& \bigcup_{K \in \Th(Q)} \deK,\qquad
\FO := \Omega \times \left\{0\right\}, \quad 
\FT := \Omega \times \left\{T\right\}, \quad
\FD := \po \times (0, T),
\\
\Ftime :=& \mbox{ the union of all the time-like faces},\\
\Fspa  :=& \mbox{ the union of all the space-like faces}.
\end{align*}

We also employ the standard DG notation for the averages $\mvl{\cdot}$ and 
space $\jump{\cdot}_{\bN}$ and time $\jump{\cdot}_t$ jumps for 
piecewise-continuous complex scalar $w$ and vector $\btau$ fields:
\begin{align*}
&\begin{cases}
\mvl{w} : = \frac{1}{2} \left(w|_{K_1} + w|_{K_2}\right)\\
\mvl{\btau} : = \frac{1}{2} \left(\btau|_{K_1} + \btau|_{K_2}\right)
\end{cases}
&&\oon \deK_1\cap\deK_2\subset\Ftime,
\\
&\begin{cases}
\jump{w}_\bN : = w|_{K_1} \vnSpace{K_1} + w|_{K_2} \vnSpace{K_2}\\
\jump{\btau}_\bN : = \btau|_{K_1} \cdot \vnSpace{K_1} + \btau|_{K_2} \cdot \vnSpace{K_2}
\end{cases}
&&\oon\deK_1\cap\deK_2\subset\Ftime,
\\
&\begin{cases}
\jump{w}_t : = w^- - w^+\\
\jump{\btau}_t : = \btau^- - \btau^+,
\end{cases}
&&\oon\Fspa,
\end{align*}
where $\vnSpace{K}\in\IR^d$ is the space component of the outward-pointing unit 
normal vector on $\deK\cap\Ftime $, and the superscripts ``$-$'' and ``$+$'' 
are used to 
denote the traces on $\Omega\times\{t_n\}$ of scalar and vector 
fields from the time-slabs $D_n$ and $D_{n+1}$, at lower and higher times, 
respectively.

\subsection{Formulation of the Trefftz-DG method}\label{S:DG}
We define the local and global Trefftz spaces:
\begin{align}
\nonumber
\bT(K) & := \Big\{ w \in \ESOBOLEV{1}{I_n; L^2(\Kx)} \cap L^2\left(I_n; 
\ESOBOLEV{2}{\Kx}\right) \; \text{ such that}  \\
&\hspace{25mm} i \dfdt{w} + \Delta w 
- V w = 0 \ \mbox{ on }K = \Kx \times I_n\Big\}, 
\label{EQN:TrefftzSpace}\\
\bT(\Th) &:= \left\{w \in L^2\left(Q\right)^{d+1}\ \Big|\ w|_K \in \bT(K), \ 
\forall K \in \Th(Q)\right\}.
\nonumber
\end{align}
For any finite-dimensional subspace $\bVp\left(\Th\right) \subset \bT(\Th)$ the 
proposed Trefftz-DG method applied to \cref{EQN::SCHRODINGER-EQUATION} seeks 
an 
approximation $\uhp(\bx, t)\in \bVp(\Th)$ of the exact solution $\psi(\bx, t) 
\in \bT(\Th)$ such that for any test function $\shp \in \bVp(\Th)$ the 
following equation is satisfied for all $K \in \Th(Q)$
\begin{align}
\int_K \uhp & \Big(\conj{i \dfdt{\shp} + \Delta \shp - V \shp} \Big) \dV 
\nonumber \\
 & + 
\oint_{\partial K}\left[ i \hUhp  \conj\shp \vnTime{K} + \Big( \QFLUX 
\conj\shp - 
\hUhp\nabla \conj\shp\Big) \cdot \vnSpace{K}\right]\dS = 0, 
\label{EQN::TREFFTZ-DG-FORMULATION}
\end{align}
where $\conj{\,\cdot\,}$ denotes the complex conjugate, $\vnTime{K}$ is the time 
component of the outward-pointing unit normal vector on $\deK$ (so 
$\vnTime{K}=\pm1$ on $\deK\cap(\Fspa\cup\FO\cup\FT)$ and $\vnTime{K}=0$ 
otherwise).
\Cref{EQN::TREFFTZ-DG-FORMULATION} is obtained integrating by parts 
the product of \cref{EQN::SCHRODINGER-EQUATION-1} and $\conj\shp$ twice in 
space and once in time, noting the sign change in the time-derivative term due 
to the conjugation of $i$.
The so-called \textit{numerical fluxes} $\hUhp$ and $\QFLUX $ are approximations 
of the traces 
of $\uhp$ and $\nabla \uhp$ on $\Fh$. We choose them as:
\begin{align*}
\label{EQN::NUMERICAL-FLUXES}
	\hUhp & := \left\{
	\begin{tabular}{ll}
	$\uhpT$, & on $\Fspa$,\\[1ex]
	$\uhp$, & on $\FT$,\\[1ex]
	$\psi_0$, & on $\FO$, \\[1ex]
	$\mvl{\uhp} - i\beta \jump{\nabla \uhp}_{\bN}$, & on $\Ftime$,\\[1ex]
	$\gD$, & on $\FD$,
	\end{tabular}
	\right. \\[1mm]
	\QFLUX & := \left\{
	\begin{tabular}{ll}
	$\mvl{\nabla \uhp} + i\alpha \jump{\uhp}_{\bN}$, & on 
$\Ftime$,\\[1ex]
	$\nabla \uhp  + i \alpha \left(\uhp - \gD \right)\vnSpace{\Omega}$, & on 
$\FD$,
	\end{tabular}
	\right.
\end{align*}
where $\alpha \in L^{\infty}(\Ftime \cup \FD)$ and $\beta \in 
L^{\infty}(\Ftime)$ are some mesh-dependent stabilization parameters 
with $\essinf_{\Ftime\cup\FD}\alpha>0$ and $\essinf_\Ftime\beta>0$. {These stabilization parameters play an important role in the convergence of the method; in \cref{THM::ERROR-ESTIMATE} we present a choice that ensures optimal $h$-convergence in a mesh-skeleton norm.}

Since all $\shp \in \bVp(\Th)$ satisfy \cref{EQN::SCHRODINGER-EQUATION-1} in 
each mesh element, the volume integral in \Cref{EQN::TREFFTZ-DG-FORMULATION} 
vanishes and 
thus the Trefftz-DG method only involves integrals on the mesh skeleton.
Consequently, after summing \Cref{EQN::TREFFTZ-DG-FORMULATION} over 
all the elements $K \in 
\Th(Q)$ and substituting the definition of the numerical fluxes, the following 
Trefftz-DG variational formulation is obtained:
\begin{equation}
\label{EQN::VARIATIONAL-TREFFTZ-DG}
\mbox{Seek }\uhp \in \bVp(\Th) \mbox{ such that: } \cbA{\uhp}{\shp} = 
\ell(\shp), \quad \forall \shp \in \bVp(\Th),
\end{equation}
where
\begin{align*}
\cbA{\uhp}{\shp} & := \int_{\Fspa}i \uhpT \jump{\cs}_t  \dx + \int_{\FT} i 
\uhp \cs \dx \\
&\; + \int_{\Ftime} \Big(\mvl{\nabla \uhp} \cdot \jump{\cs}_{\bN} + i \alpha 
\jump{\uhp}_{\bN} \cdot \jump{\cs}_{\bN} - \mvl{\uhp} \jump{\nabla 
\cs}_{\bN}  \\
&\hspace{10mm} + i \beta 
\jump{\nabla \uhp}_{\bN} \jump{\nabla \cs}_{\bN}\Big) \dS + \int_{\FD} 
\left(\nabla \uhp \cdot \vnSpace{\Omega}+ i \alpha 
\uhp \right)\cs \dS  ,\\
\ell(\shp) & := \int_{\FO} i \psi_0 \cs \dx + \int_{\FD} \gD \left(\nabla 
\cs \cdot \vnSpace{\Omega} + i \alpha\cs\right) \dS.
\end{align*}
{As a result of the Trefftz property, the definitions of $\cbA{\cdot}{\cdot}$ and $\ell(\cdot)$ in the variational formulation \cref{EQN::VARIATIONAL-TREFFTZ-DG} are independent of the potential $V$, which has an effect only on the discrete space.}


\section{Well-posedness, stability and quasi-optimality of the Trefftz-DG 
method}
\label{SECT::WELL-POSEDNESS}
The theoretical results in this section are derived for arbitrary 
space dimension $d$ and are independent of the specific {choice} of the discrete subspace
$\bVp(\Th)$, {whose elements are local solutions of the Schr\"odinger equation  \cref{EQN::SCHRODINGER-EQUATION-1}}.

The following identities will be used
\begin{subequations}
\label{EQN::AV-JUMP-IDENTITIES}
\begin{align} 
\REL{w^- \jump{\cw}_t}-  \frac{1}{2} \jump{\abs{w}^2}_t & = 
\frac{1}{2 } \abs{\jump{w}_t}^2, \qquad \mbox{on }\Fspa, 
\label{EQN::AV-JUMP-IDENTITIES-1}\\
\mvl{w} \jump{\btau}_{\bN} + \mvl{\btau} \cdot \jump{w}_{\bN} & = \jump{w 
\btau}_{\bN}, \qquad \ \ \ \mbox{ on } \Ftime.
\label{EQN::AV-JUMP-IDENTITIES-3}
\end{align}
\end{subequations}
Recalling that the numerical flux parameters $\alpha$ and $\beta$ are positive, 
we 
define the following mesh-dependent semi-norms:
\begin{align}
\label{EQN::DG-NORMS}
\Tnorm{w}{\DG}^2  : =  &\, \Norm{\jump{w}_t}{L^2(\Fspa)}^2 + 
\frac{1}{2}\Norm{w}{L^2(\FT \cup \FO)}^2 + \Norm{\alpha^{1/2} 
w}{L^2(\FD)}^2\\
\nonumber
& + \Norm{\alpha^{1/2} 
\jump{w}_{\bN}}{L^2(\Ftime)^d}^2 + \Norm{\beta^{1/2} \jump{\nabla 
w}_{\bN}}{L^2(\Ftime)}^2,
\\
\nonumber
\Tnorm{w}{\DGp}^2  : = &\, \Tnorm{w}{\DG}^2 + \Norm{w^-}{L^2(\Fspa)}^2 + 
\Norm{\alpha^{-1/2}\mvl{\nabla w}}{L^2(\Ftime)^d}^2\\
& + \Norm{\alpha^{-1/2}\nabla 
w \cdot \vnSpace{\Omega}}{L^2(\FD)} + 
\Norm{\beta^{-1/2}\mvl{w}}{L^2(\Ftime)}^2.
\nonumber
\end{align}
Even though $\Tnorm{\cdot}{\DG}$ and $\Tnorm{\cdot}{\DGp}$ are just seminorms 
on $\ESOBOLEV{1}{\Th}$, the following lemma shows that they are indeed norms on 
$\bT(\Th)$. Furthermore, continuity and coercivity of the sesquilinear form 
$\cbA{\cdot\, }{\!\!\cdot}$ with respect to these norms are proven in 
\Cref{PROP::COERCIVITY,PROP::CONTINUITY}.
\begin{lemma}
\label{LEMMA::DG-NORMS} 
	$\Tnorm{\cdot}{\DG}$ and $\Tnorm{\cdot}{\DGp}$ are norms on 
$\bT(\Th)$.
\end{lemma}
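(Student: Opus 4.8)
The plan is to observe that only the definiteness of the two functionals requires the Trefftz structure, and to obtain it from a uniqueness/energy argument for the Schr\"odinger equation. Both $\Tnorm{\cdot}{\DG}$ and $\Tnorm{\cdot}{\DGp}$ are manifestly seminorms on $\ESOBOLEV{1}{\Th}$: non-negativity and absolute homogeneity are clear since each is a sum of squared $L^2$-seminorms of linear jump/trace operators applied to $w$, and the triangle inequality descends from that of the underlying $L^2$-norms. Since $\Tnorm{w}{\DGp}^2\ge\Tnorm{w}{\DG}^2$ by definition, it suffices to show that, for $w\in\bT(\Th)$, the condition $\Tnorm{w}{\DG}=0$ forces $w=0$.

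First I would read off the vanishing of the individual contributions in \cref{EQN::DG-NORMS}. Using $\essinf_{\Ftime\cup\FD}\alpha>0$ and $\essinf_\Ftime\beta>0$, the condition $\Tnorm{w}{\DG}=0$ yields
\begin{equation*}
w=0 \oon \FO\cup\FT\cup\FD, \qquad \jump{w}_t=0 \oon \Fspa, \qquad \jump{w}_\bN=\bzero \tand \jump{\nabla w}_\bN=0 \oon \Ftime.
\end{equation*}

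The crux is to promote $w$ from a piecewise to a genuine global solution. The identity $\jump{w}_\bN=\bzero$ on $\Ftime$, together with the local regularity $w|_K\in L^2(I_n;\ESOBOLEV{2}{\Kx})$, forces the spatial traces to agree across time-like faces, so that $w(\cdot,t)\in\ESOBOLEV{1}{\Omega}$ for a.e.\ $t\in I$; adding $\jump{\nabla w}_\bN=0$ matches the normal derivatives as well, which upgrades this to $w(\cdot,t)\in\ESOBOLEV{2}{\Omega}$ with the elementwise Laplacian equal to the distributional one, and with $w=0$ on $\FD$ we obtain $w(\cdot,t)\in\ESOBOLEV{2}{\Omega}\cap H^1_0\OO$. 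Similarly, $\jump{w}_t=0$ on $\Fspa$ removes any singular contribution to $\partial_t w$ at the slab interfaces, giving $w\in\ESOBOLEV{1}{I;L^2\OO}$. Hence $w$ solves \cref{EQN::SCHRODINGER-EQUATION-1} a.e.\ in $Q$, with $w=0$ on $\po\times I$ and $w(\cdot,0)=0$.

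I would then close by $L^2$-conservation. Testing \cref{EQN::SCHRODINGER-EQUATION-1} against $\conj w$, integrating over $\Omega$ and by parts in space (the boundary term vanishes since $w=0$ on $\po$), and using that $V$ is real, one obtains $\int_\Omega\conj w\,\partial_t w\dx=-\ri\big(\Norm{\nabla w}{L^2\OO}^2+\int_\Omega V\abs{w}^2\dx\big)$, which is purely imaginary; therefore $\frac{d}{dt}\Norm{w(\cdot,t)}{L^2\OO}^2=2\REL{\int_\Omega\conj w\,\partial_t w\dx}=0$, and $w(\cdot,0)=0$ gives $w\equiv0$ in $Q$. (Equivalently, $w$ is the solution of \cref{EQN::SCHRODINGER-EQUATION} with vanishing data, unique by the well-posedness result cited in \Cref{SEC::INTRODUCTION}.) The main obstacle is precisely this gluing step: on the full broken space $\ESOBOLEV{1}{\Th}$ the functionals are only seminorms --- any field supported in the element interiors and vanishing on $\Fh$ lies in their kernel --- so the argument must genuinely use that a local solution is determined by its skeleton data. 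Making the gluing rigorous (recovering global $\ESOBOLEV{2}{\Omega}$-regularity in space and excluding a distributional time jump) is the technical heart; once it is in place, the energy identity is routine.
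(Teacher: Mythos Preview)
Your proof is correct and follows the same overall strategy as the paper: reduce to showing $\Tnorm{w}{\DG}=0\Rightarrow w=0$ on $\bT(\Th)$, use the vanishing jump and trace terms to glue the piecewise Trefftz function into a global homogeneous Schr\"odinger solution, and conclude by uniqueness.

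The difference is in how the gluing and uniqueness are phrased. The paper only upgrades $w$ to $H^1(0,T;L^2\OO)\cap L^2(0,T;H^1_0\OO)$ (using $\jump{w}_t=0$, $\jump{w}_\bN=0$ and $w=0$ on $\FD$), then shows directly that $w$ is a \emph{variational} solution: the remaining condition $\jump{\nabla w}_\bN=0$ is used to kill the interelement boundary terms in the integration-by-parts identity
\[
0=\sum_{K}\int_K\Big(i\partial_t w+\Delta w-Vw\Big)\conj\varphi
=\int_Q\Big(i\partial_t w\,\conj\varphi-\nabla w\cdot\nabla\conj\varphi-Vw\,\conj\varphi\Big)\dV,
\]
and uniqueness is imported from Lions--Magenes. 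You instead push the regularity further, using $\jump{\nabla w}_\bN=0$ together with $\jump{w}_\bN=0$ to obtain $w(\cdot,t)\in H^2(\Omega)\cap H^1_0\OO$, which lets you close with a self-contained $L^2$-conservation argument (and you note the well-posedness alternative). Your route is slightly more elementary in that it avoids citing an external uniqueness theorem, at the cost of the extra (but standard) step of recovering global $H^2$ regularity in space from matched traces and normal derivatives across polytopic faces.
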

\begin{proof}
It is enough to prove that $\Tnorm{v}{\DG} = 0$, $v\in \bT(\Th)$, implies $v = 
0$. 
Indeed, if $\Tnorm{v}{\DG} = 0$, then, by the definitions  
\eqref{EQN:TrefftzSpace} and \eqref{EQN::DG-NORMS} of the Trefftz space and the 
DG norm, $v\in H^1(0,T;L^2\OO)\cap L^2(0,T;H^1_0\OO)$.
Then, for all $w$ in the same space,
$$
0=\sum_{K\in\calT_h}\int_K\Big(i\der vt +\Delta v-Vv\Big)\conj w\dV
=\int_Q\Big(i\der vt \conj w-\nabla v\cdot \nabla\cw-Vv\conj w\Big)\dV
$$
by the Trefftz property, where the boundary terms on $\Ftime$ arising from the integration by parts cancel because $\jmp{\nabla v}_\bN=0$ is implied by $\Tnorm{v}{\DG} = 0$.
Moreover $v(\cdot,0)=0$.
This means that $v$ is {a} variational solution of the homogeneous Schr\"odinger problem \eqref{EQN::SCHRODINGER-EQUATION} (i.e.\ with $\gD=0$ and $\psi_0=0$).
By the uniqueness of the solution in \cite[Chapter 3, 
Thm.~10.1]{Lions_Magenes_1972}, it follows that $v=0$.
\end{proof}

\begin{proposition}[Coercivity]
\label{PROP::COERCIVITY} 
For all $w \in \bT(\Th)$ the following identity holds 
\begin{equation}
\label{EQN::COERCIVITY}
\im\big(\cbA{w}{w}\big) = \Tnorm{w}{\DG}^2.
\end{equation}
\end{proposition}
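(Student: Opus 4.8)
The plan is to set $\uhp=\shp=w$ in $\cbA{\cdot}{\cdot}$, take the imaginary part, and split the skeleton integrals into \emph{dissipative} penalty terms, which reproduce squared seminorms at once, and \emph{conservative} flux terms, which require the Trefftz property. The penalty part is immediate: on $\Ftime$ the products $\jump{w}_\bN\cdot\jump{\cw}_\bN=\abs{\jump{w}_\bN}^2$ and $\jump{\nabla w}_\bN\jump{\nabla\cw}_\bN=\abs{\jump{\nabla w}_\bN}^2$ are real and nonnegative, so, since $\alpha,\beta$ are real, $\im\big(i\alpha\jump{w}_\bN\cdot\jump{\cw}_\bN+i\beta\jump{\nabla w}_\bN\jump{\nabla\cw}_\bN\big)=\alpha\abs{\jump{w}_\bN}^2+\beta\abs{\jump{\nabla w}_\bN}^2$; likewise $\im(i\alpha\abs{w}^2)=\alpha\abs{w}^2$ on $\FD$ and $\im(i\abs{w}^2)=\abs{w}^2$ on $\FT$. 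These yield every term of $\Tnorm{w}{\DG}^2$ except those on $\Fspa$ and $\FO\cup\FT$.

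The crux is the leftover conservative block: the upwind term $\int_{\Fspa}\REL{w^-\jump{\cw}_t}$, the $\Ftime$ average terms $\int_{\Ftime}\im\big(\mvl{\nabla w}\cdot\jump{\cw}_\bN-\mvl{w}\jump{\nabla\cw}_\bN\big)$, and $\int_{\FD}\im\big((\nabla w\cdot\vnSpace{\Omega})\cw\big)$, none of which is manifestly a norm. To tame them I would use that every $w\in\bT(\Th)$ solves the equation elementwise, whence $\sum_{K}\int_K\big(i\dfdt{w}+\Delta w-Vw\big)\cw\dV=0$. Taking the imaginary part, integrating the time-derivative term and integrating the Laplacian by parts in space on each $K$, and discarding the real quantities $\int_K V\abs{w}^2$ and $\int_K\abs{\nabla w}^2$, all volume contributions vanish and one obtains a purely skeletal identity: the telescoped mass $\tfrac12\jump{\abs{w}^2}_t$ on $\Fspa$ together with $\tfrac12\abs{w}^2$ on $\FT$ and $-\tfrac12\abs{w}^2$ on $\FO$ equals minus the normal gradient-fluxes over $\Ftime\cup\FD$. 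A short computation using the average/jump identity \eqref{EQN::AV-JUMP-IDENTITIES-3} shows these gradient-fluxes are precisely the $\Ftime$ and $\FD$ conservative terms of $\cbA{w}{w}$, so the Trefftz identity trades them for $\abs{w}^2$-fluxes on the space-like skeleton.

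Substituting this relation back, the boundary masses assemble into $\tfrac12\Norm{w}{L^2(\FT\cup\FO)}^2$; it is worth stressing that the $\FO$ contribution, absent from $\cbA{\cdot}{\cdot}$ (where the initial datum enters only $\ell$), is generated entirely by the conservation identity. The remaining $-\tfrac12\int_{\Fspa}\jump{\abs{w}^2}_t$ merges with the upwind term via \eqref{EQN::AV-JUMP-IDENTITIES-1}, since $\REL{w^-\jump{\cw}_t}-\tfrac12\jump{\abs{w}^2}_t=\tfrac12\abs{\jump{w}_t}^2$, producing the $\Fspa$ jump contribution. Collecting all pieces gives $\im\big(\cbA{w}{w}\big)=\Tnorm{w}{\DG}^2$. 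I expect the conservative block to be the one real obstacle: the time-like and Dirichlet gradient fluxes carry no sign and become mass terms only through the elementwise PDE, so the argument hinges on careful bookkeeping of the normals $\vnSpace{K},\vnTime{K}$ and of the averages in \eqref{EQN::AV-JUMP-IDENTITIES-3} across each internal time-like face.
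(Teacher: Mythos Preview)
Your proposal is correct and follows essentially the same approach as the paper: both arguments hinge on the elementwise Trefftz identity (integrating $i\partial_t w+\Delta w-Vw$ against $\conj w$ and taking the imaginary part), the average/jump product formula \eqref{EQN::AV-JUMP-IDENTITIES-3} to rewrite the $\Ftime$ flux terms, and the upwind identity \eqref{EQN::AV-JUMP-IDENTITIES-1} on $\Fspa$. The only difference is organisational: the paper adds the vanishing Trefftz integral to $\im\cbA{w}{w}$ and simplifies the sum in one sweep, whereas you first isolate the penalty terms and then use the Trefftz identity to convert the remaining conservative fluxes into space-like mass contributions; the computations and key steps are otherwise identical.
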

\begin{proof}
Elementwise integration by parts for $w \in \bT(\Th)$ gives the identity:
\begin{align}
0 & = \im \bigg(\sum_{K \in \Th(Q)}\int_K w \Big(\conj{i \dfdt{w} + \Delta w - V w}\Big) \dV \bigg) \nonumber \\
& = -\frac{1}{2} \sum_{K \in \Th(Q)} \int_K \dfdt{\abs{w}^2} \dV + 
\im \bigg(\int_{\Ftime} \jump{w \nabla \conj{w}}_{\bN} \dS + \int_{\FD} w \nabla 
\conj{w} \cdot \vnSpace{\Omega}\dS\bigg) \nonumber \\
& = -\frac{1}{2} \bigg(\int_{\Fspa} \jump{\abs{w}^2}_t \dx + 
\int_{\FT}\abs{w}^2 \dx - \int_\FO  \abs{w}^2 \dx \bigg) \nonumber \\
& \qquad + \im \bigg(\int_{\Ftime} \jump{w \nabla \conj{w}}_{\bN} \dS 
+ \int_{\FD} w \nabla \conj{w} \cdot \vnSpace{\Omega}\dS\bigg).
\label{EQN::COERCIVITY-PROOF-1}
\end{align}
Together with the jump identities in \cref{EQN::AV-JUMP-IDENTITIES}, this gives
\begin{align*}
\im\left(\cbA{w}{w}\right)
& = \im \bigg(\cbA{w}{w} + \sum_{K \in \Th(Q)}\int_K w 
\Big(\conj{i \dfdt{w} + \Delta w - V w}\Big) \dV\bigg) \\
& \stackrel{\cref{EQN::COERCIVITY-PROOF-1}}{=} \int_{\Fspa} 
\Big(\REL{w^- \jump{\conj{w}}_t} - \frac{1}{2} 
\jump{\abs{w}^2}_t\Big) \dx \\
& \qquad + \frac{1}{2}\bigg( \int_{\FT} \abs{w}^2 
\dx + \int_{\FO} \abs{w}^2 \dx \bigg)
+ \int_{\FD} \alpha \abs{w}^2 \dS \\
& \qquad + \int_{\Ftime} \left(\alpha 
\abs{\jump{w}_{\bN}}^2 + \beta 
\abs{\jump{\nabla w}_{\bN}}^2\right) \dS \\
& \qquad  + \int_{\Ftime}\im\left(\rule{0mm}{4mm}\mvl{\nabla w} \cdot 
\jump{\conj{w}}_{\bN} - \mvl{w} \jump{\nabla \conj{w}}_{\bN} + 
\jump{w\nabla \conj{w}}_{\bN}\right) \dS \\
 & \stackrel{\cref{EQN::AV-JUMP-IDENTITIES}}
= \Tnorm{w}{\DG}^2. 
\end{align*}
\end{proof}

\begin{proposition}[Continuity]
\label{PROP::CONTINUITY} 
	The sesquilinear form $\cbA{\cdot}{\cdot}$ and the linear functional 
$\ell(\cdot)$ are continuous in the following sense:
\begin{subequations}
\begin{align}
\label{EQN::CONTINUITY-SESQUILINEAR-FORM}
&\abs{\cbA{v}{w}}  \leq 2 \Tnorm{v}{\DGp} \Tnorm{w}{\DG},
& \forall v,w \in \bT(\Th), \\
\label{EQN::CONTINUITY-LINEAR-FUNCTIONAL}
&\abs{\ell(v)}  \leq \Big(2\Norm{\psi_0}{L^2(\FO)}^2 + 2 
\Norm{\alpha^{1/2}\gD}{L^2(\FD)}^2 \Big)^{1/2} \Tnorm{w}{\DGp},
& \forall v \in \bT(\Th).
\end{align}
\end{subequations}
Furthermore, if $\gD = 0$, then  $\abs{\ell(v)} \leq 
\sqrt{2}\Norm{\psi_0}{L^2(\FO)} \Tnorm{w}{\DG}$.
\end{proposition}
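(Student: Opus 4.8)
The plan is to establish both bounds by expanding the sesquilinear form and the functional into their integral contributions over the skeleton pieces $\Fspa$, $\FT$, $\FO$, $\Ftime$ and $\FD$, and estimating each contribution with the Cauchy--Schwarz inequality, inserting the weights $\alpha^{\pm1/2}$, $\beta^{\pm1/2}$ so that every factor carrying the first argument sits in a summand of $\Tnorm{\cdot}{\DGp}^2$ and every factor carrying the second argument sits in a summand of $\Tnorm{\cdot}{\DG}^2$. No structural ingredient beyond Cauchy--Schwarz is needed; the only real work is tracking the constant.

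For \cref{EQN::CONTINUITY-SESQUILINEAR-FORM} I would bound the eight integrands of $\cbA{v}{w}$ one by one. On $\Fspa$ I pair $\Norm{v^-}{L^2(\Fspa)}$ with $\Norm{\jump{w}_t}{L^2(\Fspa)}$; on $\FT$ I pair $\Norm{v}{L^2(\FT)}$ with $\Norm{w}{L^2(\FT)}$; on $\Ftime$, using that $\alpha,\beta$ are real and positive, I rewrite the four terms as $(\alpha^{-1/2}\mvl{\nabla v})\cdot(\alpha^{1/2}\jump{\conj w}_{\bN})$, $(\alpha^{1/2}\jump{v}_{\bN})\cdot(\alpha^{1/2}\jump{\conj w}_{\bN})$, $-(\beta^{-1/2}\mvl{v})(\beta^{1/2}\jump{\nabla\conj w}_{\bN})$ and $(\beta^{1/2}\jump{\nabla v}_{\bN})(\beta^{1/2}\jump{\nabla\conj w}_{\bN})$, which pair the $\DGp$-summands $\alpha^{-1/2}\mvl{\nabla v}$, $\beta^{-1/2}\mvl{v}$ and the $\DG$-summands $\alpha^{1/2}\jump{v}_{\bN}$, $\beta^{1/2}\jump{\nabla v}_{\bN}$ of $v$ against the $\DG$-summands $\alpha^{1/2}\jump{w}_{\bN}$, $\beta^{1/2}\jump{\nabla w}_{\bN}$ of $w$; and on $\FD$ I split the flux term as $(\alpha^{-1/2}\nabla v\cdot\vnSpace{\Omega})(\alpha^{1/2}\conj w)$ and the penalty term as $(\alpha^{1/2}v)(\alpha^{1/2}\conj w)$. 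Collecting the resulting eight products $\sum_i X_iY_i$, I then apply the discrete Cauchy--Schwarz inequality $\sum_i X_iY_i\le(\sum_i X_i^2)^{1/2}(\sum_i Y_i^2)^{1/2}$.

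The constant is the only delicate point. On the $w$-side the three quantities $\Norm{\alpha^{1/2}\jump{w}_{\bN}}{L^2(\Ftime)^d}$, $\Norm{\beta^{1/2}\jump{\nabla w}_{\bN}}{L^2(\Ftime)}$ and $\Norm{\alpha^{1/2}w}{L^2(\FD)}$ each occur in two products, so $\sum_i Y_i^2\le 2\Tnorm{w}{\DG}^2$; on the $v$-side every factor is a distinct summand of $\Tnorm{v}{\DGp}^2$ except the $\FT$ contribution, for which $\Norm{v}{L^2(\FT)}^2\le\Norm{v}{L^2(\FT\cup\FO)}^2=2\cdot\tfrac12\Norm{v}{L^2(\FT\cup\FO)}^2$, whence $\sum_i X_i^2\le 2\Tnorm{v}{\DGp}^2$. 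Multiplying the two square roots yields $\sqrt2\cdot\sqrt2=2$, which is exactly \cref{EQN::CONTINUITY-SESQUILINEAR-FORM}.

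For \cref{EQN::CONTINUITY-LINEAR-FUNCTIONAL} I expand $\ell(v)$ into the $\FO$ term and the two $\FD$ terms and pair $\Norm{\psi_0}{L^2(\FO)}$ with $\Norm{v}{L^2(\FO)}$, and $\Norm{\alpha^{1/2}\gD}{L^2(\FD)}$ with each of $\Norm{\alpha^{-1/2}\nabla v\cdot\vnSpace{\Omega}}{L^2(\FD)}$ and $\Norm{\alpha^{1/2}v}{L^2(\FD)}$. Using $\Norm{v}{L^2(\FO)}\le\sqrt2\,\big(\tfrac12\Norm{v}{L^2(\FT\cup\FO)}^2\big)^{1/2}$ and collapsing the two $\FD$ contributions via $a+b\le\sqrt2\,(a^2+b^2)^{1/2}$, a final Cauchy--Schwarz over the two groups produces the factor $\big(2\Norm{\psi_0}{L^2(\FO)}^2+2\Norm{\alpha^{1/2}\gD}{L^2(\FD)}^2\big)^{1/2}$ multiplying $\Tnorm{v}{\DGp}$, since the surviving $v$-quantities $\tfrac12\Norm{v}{L^2(\FT\cup\FO)}^2$, $\Norm{\alpha^{-1/2}\nabla v\cdot\vnSpace{\Omega}}{L^2(\FD)}^2$ and $\Norm{\alpha^{1/2}v}{L^2(\FD)}^2$ are distinct summands of $\Tnorm{v}{\DGp}^2$. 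Finally, when $\gD=0$ only the $\FO$ term remains, so $\abs{\ell(v)}\le\Norm{\psi_0}{L^2(\FO)}\Norm{v}{L^2(\FO)}\le\sqrt2\,\Norm{\psi_0}{L^2(\FO)}\Tnorm{v}{\DG}$ is immediate. The main obstacle throughout is thus purely the constant bookkeeping: ensuring the doubly-counted $w$-quantities and the $\FT\cup\FO$ splitting combine to give exactly the factor $2$ rather than something larger.
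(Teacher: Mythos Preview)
Your proposal is correct and follows essentially the same approach as the paper's own proof: estimate each skeleton integral by Cauchy--Schwarz with the appropriate $\alpha^{\pm1/2}$, $\beta^{\pm1/2}$ weights, then apply a discrete Cauchy--Schwarz to combine the products. The paper merely states the intermediate sum-of-products bound and says ``apply Cauchy--Schwarz once again'' without spelling out the constant bookkeeping you have worked through; your tracking of the factor $2$ (from the doubly-appearing $w$-terms on $\Ftime$ and $\FD$, and from $\Norm{v}{L^2(\FT)}^2\le 2\cdot\tfrac12\Norm{v}{L^2(\FT\cup\FO)}^2$ on the $v$-side) is exactly what makes that step go through.
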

\begin{proof}
Let $v, w \in \bT(\Th)$. After applying the triangle and the Cauchy--Schwarz 
inequalities several times we obtain
\begin{align*}
\abs{\cbA{v}{w}} \leq &  \Norm{v^-}{L^2(\Fspa)}\Norm{\jump{w}_t}{L^2(\Fspa)} 
+ \Norm{v}{L^2(\FT)}\Norm{w}{L^2(\FT)} \\
+ & \Big(\Norm{\alpha^{-\frac{1}{2}}\nabla v 
\cdot \vnSpace{\Omega}}{L^2(\FD)} + \Norm{\alpha^{\frac{1}{2}} v}{L^2(\FD)} 
\Big) 
\Norm{\alpha^{\frac{1}{2}}w}{L^2(\FD)}\\
+ & \Big(\Norm{\alpha^{-\frac{1}{2}}\mvl{\nabla v}}{L^2(\Ftime)^d} + 
\Norm{\alpha^{\frac{1}{2}} 
\jump{v}_{\bN}}{L^2(\Ftime)^d}\Big)\Norm{\alpha^{\frac{1}{2}}\jump{w}_{\bN}}{
L^2(\Ftime)^d} \\\def\dx{\mbox{d}\uu{x}}
+ & \Big(\Norm{\beta^{-\frac{1}{2}}\mvl{v}}{L^2(\Ftime)} + 
\Norm{\beta^{\frac{1}{2}}\jump{\nabla 
v}_{\bN}}{L^2(\Ftime)}\Big)\Norm{\beta^{\frac{1}{2}}\jump{\nabla 
w}_{\bN}}{L^2(\Ftime)}.
\end{align*}
Bound \cref{EQN::CONTINUITY-SESQUILINEAR-FORM} is then obtained by 
using Cauchy--Schwarz inequality once again. The bounds for the linear 
operator $\ell(\cdot)$ can be obtained in a similar way.
\end{proof}

The following well-posedness and quasi-optimality theorem is a direct 
consequence of the Lax--Milgram theorem and 
\Cref{PROP::COERCIVITY,PROP::CONTINUITY}.

\begin{theorem}[Quasi-optimality]\label{THEOREM::WELL-POSEDNESS}
For any finite-dimensional subspace $\bVp(\Th)$ of\,  $\bT(\Th)$ there exists 
a unique solution $\uhp \in \bVp(\Th)$ satisfying 
\cref{EQN::VARIATIONAL-TREFFTZ-DG}. Furthermore, the following 
quasi-optimality condition holds:
\begin{equation}
\label{EQN::QUASI-OPTIMALITY}
\Tnorm{\psi - \uhp}{\DG} \leq 3 \inf_{\shp \in \bVp(\Th)}\Tnorm{\psi - 
\shp}{\DGp}.
\end{equation}
\end{theorem}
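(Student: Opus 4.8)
The plan is to deduce both assertions from the abstract Lax--Milgram framework, combining \Cref{LEMMA::DG-NORMS,PROP::COERCIVITY,PROP::CONTINUITY} with the consistency of the scheme and a standard C\'ea-type argument. Since the only genuinely nontrivial input is consistency, I expect that to be the point needing most care; everything else is a mechanical assembly of the preceding results.

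First, for well-posedness I would regard $\bVp(\Th)$ as a Hilbert space: being finite-dimensional it is complete, and $\Tnorm{\cdot}{\DG}$ is a genuine norm on it by \Cref{LEMMA::DG-NORMS}. \Cref{PROP::CONTINUITY} shows that $\cbA{\cdot}{\cdot}$ and $\ell(\cdot)$ are bounded, while \Cref{PROP::COERCIVITY} furnishes the coercivity bound $\abs{\cbA{w}{w}} \ge \im\big(\cbA{w}{w}\big) = \Tnorm{w}{\DG}^2$ with unit constant (so that, after multiplying the form by $-i$, the classical coercivity hypothesis is met). The Lax--Milgram theorem then yields a unique $\uhp \in \bVp(\Th)$ solving \cref{EQN::VARIATIONAL-TREFFTZ-DG}.

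Next I would establish consistency and Galerkin orthogonality. Because the exact solution $\psi \in \bT(\Th)$ is single-valued across the mesh skeleton, the numerical fluxes $\hUhp$ and $\QFLUX$ evaluated on $\psi$ reproduce its exact traces, and $\psi=\gD$ on $\FD$, $\psi(\cdot,0)=\psi_0$ on $\FO$; hence $\psi$ satisfies the same variational identity $\cbA{\psi}{\shp} = \ell(\shp)$ for all $\shp \in \bVp(\Th)$ — this is precisely how \cref{EQN::VARIATIONAL-TREFFTZ-DG} was derived from \cref{EQN::SCHRODINGER-EQUATION}. Subtracting the discrete equation gives $\cbA{\psi - \uhp}{\shp} = 0$ for all $\shp \in \bVp(\Th)$.

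Finally, for quasi-optimality I would fix an arbitrary $\shp \in \bVp(\Th)$ and apply the coercivity identity to $\uhp - \shp \in \bVp(\Th)$, use Galerkin orthogonality to replace $\uhp$ by $\psi$, and then invoke the continuity bound \cref{EQN::CONTINUITY-SESQUILINEAR-FORM}:
\[
\Tnorm{\uhp - \shp}{\DG}^2 = \im\big(\cbA{\uhp - \shp}{\uhp - \shp}\big) = \im\big(\cbA{\psi - \shp}{\uhp - \shp}\big) \le 2\,\Tnorm{\psi - \shp}{\DGp}\,\Tnorm{\uhp - \shp}{\DG},
\]
whence $\Tnorm{\uhp - \shp}{\DG} \le 2\,\Tnorm{\psi - \shp}{\DGp}$. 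The triangle inequality together with the elementary bound $\Tnorm{\cdot}{\DG} \le \Tnorm{\cdot}{\DGp}$, immediate from the definitions \cref{EQN::DG-NORMS}, then gives
\[
\Tnorm{\psi - \uhp}{\DG} \le \Tnorm{\psi - \shp}{\DG} + \Tnorm{\shp - \uhp}{\DG} \le 3\,\Tnorm{\psi - \shp}{\DGp},
\]
and taking the infimum over $\shp \in \bVp(\Th)$ yields \cref{EQN::QUASI-OPTIMALITY}. The one real subtlety, as flagged above, is consistency: it requires $\psi$ to be regular enough that all skeleton integrals defining $\cbA{\psi}{\shp}$ and $\ell(\shp)$ are well-defined, which is guaranteed by the assumption $\psi \in \bT(\Th)$.
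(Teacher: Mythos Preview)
Your proposal is correct and follows exactly the route the paper intends: the paper states only that the theorem ``is a direct consequence of the Lax--Milgram theorem and \Cref{PROP::COERCIVITY,PROP::CONTINUITY}'', and you have spelled out precisely that argument, including the consistency/Galerkin-orthogonality step and the C\'ea-type estimate that yields the constant $3=1+2$ from the coercivity and continuity constants.
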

\Cref{THEOREM::WELL-POSEDNESS} {allows us} to obtain an estimate of the error in the 
mesh-skeleton norm $\Tnorm{\cdot }{\DG}$ by studying the best approximation of 
the exact solution in $\bVp(\Th)$ in the (slightly stronger) 
$\Tnorm{\cdot}{\DGp}$ norm; this is the subject of the next section.

\begin{remark}[Error bounds at time $t_n$]
\Cref{THEOREM::WELL-POSEDNESS} {allows us} to control the $L^2\OO$ norm of the 
Galerkin error at final time $t=T$, but only the $L^2$ norm of the time 
\emph{jump} $\jmp{\psi-\psi\hp}_t$ on each space-like mesh interface 
$\Omega\times\{t_n\}$ (recall the definition of the $\Tnorm{\cdot}\DG$ norm 
\eqref{EQN::DG-NORMS}).
However, a similar bound on the stronger error norm 
$\|\psi^--\psi\hp^-\|_{L^2(\Omega\times\{t_n\})}$, involving only the trace of 
the error from the time-slab $D_n = \Omega\times(t_{n-1},t_n)$, can be 
obtained by 
extending the argument of this section to the partial cylinder 
$\Omega\times(0,t_n)$ (as opposed to the full cylinder $Q$), precisely as in 
\cite[Prop.~1]{Moiola_Perugia_2018}.
\end{remark} 

\begin{remark}[Energy dissipation]\label{Rem:Energy}
It follows from integration by parts that \Cref{EQN::SCHRODINGER-EQUATION} with 
homogeneous boundary conditions 
$\gD = 0$ preserves the energy (or probability) functional $\calE(t; 
\psi):=\frac12\int_{\Omega} |\psi(\bx, t)|^2 \dx$, i.e.\ $\dfhdt{}\calE(t; 
\psi) 
=0$.
Unfortunately, the Trefftz-DG method is not conservative. 
However, as in the case of the acoustic wave equation 
\cite[\S5.3]{Moiola_Perugia_2018}, it is dissipative and the energy loss can 
be quantified in terms of the initial condition error, the jumps of the 
solution on the mesh skeleton and 
the error on $\FD$ due to the weak imposition of the 
boundary conditions, which, as stated in 
\Cref{REMARK::ENERGY-ESTIMATE}, all converge to zero as we refine the 
space--time mesh. More precisely, for $\gD=0$ the Trefftz-DG solution of 
\cref{EQN::VARIATIONAL-TREFFTZ-DG} satisfies
\begin{align*}
\calE(0;\psi_0) - \calE(T; \uhp) = &\
\calE_{loss} : =  \ \delta_{\calE} + \frac{1}{2}\Norm{\psi_0 - 
\uhp}{\FO}^2, \\
\delta_{\calE} : = &\,\Norm{\jump{\uhp}_t}{L^2(\Fspa)}^2 + 
\Norm{\alpha^{1/2} \uhp}{L^2(\FD)}^2  \\
& + \Norm{\alpha^{1/2} \jump{\uhp}_{\bN}}{L^2(\Ftime)^d}^2 + \Norm{\beta^{1/2} 
\jump{\nabla 
\uhp}_{\bN}}{L^2(\Ftime)}^2.
\end{align*}
This follows from the definition of the $\Tnorm{\cdot}\DG$ norm of the solution 
$\psi_h$, the coercivity of the sesquilinear form $\cbA{\cdot\,}{\!\cdot}$, the 
definition of $\ell(\cdot)$ and simple manipulations:
\begin{align*}
\calE(0;\uhp) &+ \calE(T; \uhp) + \delta_{\calE}
\overset{\eqref{EQN::DG-NORMS}}=\Tnorm{\uhp}\DG^2
\overset{\eqref{EQN::COERCIVITY}}=\im\left(\cbA{\uhp}{\psi_h}\right)
\overset{\eqref{EQN::VARIATIONAL-TREFFTZ-DG}}=\im\left(\ell(\uhp)\right)\\
&\overset{(\gD=0)} = \re\int_\FO\psi_0\conj{\uhp}\di \bx
= \calE(0;\psi_0)+\calE(0;\uhp) - 
\frac{1}{2} \Norm{\psi_0 - \uhp}{\FO}^2.
\end{align*}
Further manipulations give the identity 
$\calE_{loss}=\Tnorm{\psi-\uhp}\DG^2-\calE(T,\psi-\uhp)$.
\end{remark}

\begin{remark}
We briefly explain how to modify the method to allow {for} other boundary conditions. 
Let $\deO\times I$ be partitioned in three parts with Dirichlet, Neumann and 
Robin conditions:
\begin{align*}
\psi& =\gD  \quad \oon\ \FD=\GD\times I,\\
\nabla\psi\cdot\bn^x_\Omega & = \gN \quad \oon\ \FN=\GN\times I,\\
\nabla\psi\cdot\bn^x_\Omega - i \vartheta\psi & = g_{\mathrm R}\quad  \oon\ 
\FR=\GR\times I,
\end{align*}
for some positive ``impedance'' function $\vartheta \in L^\infty(\FR)$.
We extend the numerical flux parameter $\beta$ to $\Ftime\cup\FN$, fix a 
function $\delta$ on $\FR$ such that $0 < \delta <1$ and choose the 
numerical fluxes on $\deO\times I$ as
\begin{align*}
\label{EQN::NUMERICAL-FLUXES-MIXED-BC}
	\hUhp & := \left\{
	\begin{tabular}{ll}
	$\gD$, & on $\FD$, \\[1ex]
	$\uhp - i\beta \left(\nabla \uhp \cdot \vnSpace{\Omega} - \gN\right)$, & on 
$\FN$, \\[1ex]
	$\uhp + \delta(i \vartheta )^{-1} \left(\nabla \uhp \cdot \vnSpace{\Omega} - 
i\vartheta  \uhp - g_{\mathrm{R}}\right) $, & on $\FR$,
	\end{tabular}
	\right. \\[1mm]
	\QFLUX \cdot \vnSpace{\Omega} & := \left\{
	\begin{tabular}{ll}
	$\nabla \uhp \cdot \vnSpace{\Omega}  + i \alpha \left(\uhp - \gD 
\right)$, & on $\FD$,\\[1ex]
	$\gN$, & on $\FN$,\\[1ex]
	$ \nabla \uhp \cdot \vnSpace{\Omega} - (1 - \delta) \left(\nabla \uhp 
\cdot \vnSpace{\Omega} - i\vartheta  \uhp  - g_{\mathrm{R}}\right) $, & 
on $ \FR$.
	\end{tabular}
	\right.
\end{align*}
This amounts to adding the following boundary terms to $\cbA{\uhp}{\shp}$ and 
$\ell(\shp)$ 
in \eqref{EQN::VARIATIONAL-TREFFTZ-DG}, respectively:
\begin{align*}
\bullet&\int_{\FN} \big(-\uhp \nabla \Conjugate{\shp} \cdot \vnSpace{\Omega} + 
i\beta \left(\nabla \uhp \cdot \vnSpace{\Omega}\right)
(\nabla \Conjugate{\shp} \cdot \vnSpace{\Omega})\big) \dS \\
&\qquad+ \int_{\FR} \Big(\delta\nabla\uhp\cdot\vnSpace{\Omega}
+(1-\delta)i\vartheta \uhp\Big)\Big(\cs+\frac{i}\vartheta 
\nabla\cs\cdot\vnSpace{\Omega}\Big)\dS,
\\
\bullet & \int_{\FN} \gN \left(-\Conjugate{\shp} + i\beta \nabla 
\Conjugate{\shp} 
\cdot \vnSpace{\Omega}\right) \dS
+\int_{\FR} g_{\mathrm R}\Big((\delta - 1) \Conjugate{\shp} + 
\frac{i\delta}{\vartheta} \nabla \Conjugate{\shp} \cdot \vnSpace{\Omega} \Big)\dS .
\end{align*}
In \cref{EQN::DG-NORMS} both norms $\Tnorm{\uhp}{\DG}^2$ and 
$\Tnorm{\uhp}{\DGp}^2$ have to be supplemented by the terms
\begin{align*}
\Norm{\beta^{1/2} \nabla \uhp \cdot \vnSpace{\Omega}}{L_2(\FN)}^2 \!+ 
\Norm{\big(\vartheta (1 - \delta)\big)^{1/2} \uhp}{L_2(\FR)}^2 + \Norm{\left(\delta 
\vartheta^{-1}\right)^{1/2} \nabla \uhp \cdot \vnSpace{\Omega}}{L_2(\FR)}^2,
\end{align*}
and a further term $\Norm{\beta^{-1/2} \uhp}{L_2(\FN)}^2$ has to be 
added to $\Tnorm{\uhp}{\DGp}^2$. 
If $\FR\ne\emptyset$, the continuity constant $2$ of the sesquilinear form 
$\cbA{\cdot}{\cdot}$ in \Cref{PROP::CONTINUITY} becomes
\[
2 \max \bigg\{\Norm{\Big(\frac{1 - \delta}{\delta}\Big)^{1/2}}{L_\infty(\FR)}, \ 
\Norm{\Big(\frac{\delta}{1 - \delta}\Big)^{1/2}}{L_\infty(\FR)} \bigg\}.
\]
Then the a priori analysis carried out in this section can be extended to 
this setting.
\end{remark}

		
\section{Approximation and convergence analysis of the Trefftz-DG 
method\label{SECT::ERROR-ESTIMATE}}

In this section we prove $h$-estimates on the $\Tnorm{\cdot}{\DGp}$ norm of 
the best 
approximation error of Schr\"odinger solutions by discrete Trefftz functions in 
$\bVp(\Th)=\prod_{K\in\calT_h(Q)}\IT_p(K)$. The local space $\bVp(K)$ is defined 
for each $K = \Kx \times I_n \in \Th(Q)$ and for $p\in\IN$ as the following set 
of complex exponentials:
\begin{align}
\label{EQN::BASIS-FUNCTIONS}
\bVp(K)  :=& \spn\big\{ \phi_{\ell}(\bx, t),\ \ell=1,\ldots, n_{d,p}\big\}, 
\ \mbox{ where }\\
\phi_{\ell}(\bx, t) := &
\exp\left[i\left(k_\ell \bd_\ell^\top \bx - (k_{\ell}^2 + V|_{K})t\right)\right] 
\ \mbox{ for } \ell = 1, \ldots, 
n_{d,p},\nonumber 
\end{align}
for some parameters $\left\{k_{\ell}\right\}\subset \IR$ and
directions $\left\{\bd_\ell\right\}\subset 
\cS_1^d:=\{\bv\in\IR^d,|\bd|=1\}$, which can be chosen differently in each 
cell $K$.
It is immediate to verify that $i\derp{\phi_\ell}t+\Delta\phi_\ell-V\phi_\ell=0$ 
in~$K$.
These exponential solutions are called ``pseudoplane waves'' in the context of 
Fresnel optics in \cite[eq.~(11)]{Grella_1982}.

In the following we choose the local dimension $n_{d,p}$ in dependence of
the dimension $d$ of the problem and  a ``degree'' parameter $p$.
The parameter $p$ is to be understood as the degree of the polynomial space 
$\IP^p(K)$ providing the same $h$-convergence rates of $\IT_p(K)$.
The Trefftz property {allows us} to construct $\IT_p(K)$ with dimension $n_{d,p}\ll 
\DIM{\IP^p(K)}$.

The approximation theory in \cref{SECT::GENERAL-APPROXIMATION} is not restricted 
to the complex-exponential basis functions in \eqref{EQN::BASIS-FUNCTIONS}, 
in fact it {allows for} any Trefftz basis satisfying \cref{ConditionApprox}.
This condition requires that the discrete space can ``replicate'' the Taylor polynomial of degree $p$ of any Schr\"odinger solution.
We expect that the same condition can be shown for other Trefftz bases that can be computationally convenient for some geometries $\Omega$ or potentials $V$.
For instance, in two space dimensions, in the presence of circular symmetries, one could define a discrete Trefftz space with basis
$b_{\ell,m}(r,\theta,t)=J_\ell(k_m r)\ee^{\ri\ell\theta}\ee^{-\ri k_m^2t}$ for a set of $\ell\in\IZ$ and $k_m\in\IR$, where $J_\ell(\cdot)$ is the $\ell$th first-order Bessel function and $(r,\theta)$ are the polar coordinates in the plane.

\subsection{Notation and preliminary results\label{SUBSECTION::PRELIMINARS}}
We use the standard multi-index notation for partial 
derivatives and monomials, adapted to the space--time setting: for $\bj = 
(\bjx, j_t) = \left(j_{x_1}, \ldots, j_{x_d}, j_t\right)\in \IN_0^{d+1},$
\begin{align*}
\bj!&:= j_{x_1}!\cdots j_{x_d}!j_t!,
&\abs{\bj} &:= \abs{\bjx} + j_t := j_{x_1} + \cdots + j_{x_d} + j_t,\\
\DerA{\bj}{f} &:= \PartDer{j_{x_1}}{x_1}\cdots 
\PartDer{j_{x_d}}{x_d}\PartDer{j_t}{t} f,
&\bx^{\bjx}t^{j_t} &:= x_1^{j_{x_1}}\cdots x_d^{j_{x_d}}t^{j_t}.
\end{align*}

\begin{definition}[Taylor polynomial]
\label{DEF::TAYLOR-POLYNOMIAL}
On an open and bounded set $D\subset\IR^{d+1}$,
the Taylor polynomial of order $m\in\IN$ (and degree $m - 1$), centered at $(\bz,s)\in D$,
of a function $\varphi \in \EFC{m - 1}{D}$ is denoted
\begin{equation*}
\Taylor{(\bz,s)}{m}{\varphi}(\bx, t) := \sum_{\abs{\bj} < m} 
\frac{1}{\bj!} \DerA{\bj}{\varphi}(\bz, s)(\bx - \bz)^{\bjx} (t - s)^{j_t}. 
\end{equation*}
\end{definition}
If $\varphi \in \EFC{m}{D}$ and the segment 
$[(\bz,s),(\bx,t)]\subset D$, 
the Lagrange's form of the Taylor remainder (see \cite{Callahan_2010}, Corollary 
3.19) is bounded as follows:
\begin{align}
\nonumber
\abs{\varphi(\bx,t) - \Taylor{(\bz,s)}{m}{\varphi}(\bx,t)} 
& \leq  \abs{\varphi}_{{\EFC{m}{D}}} \sum_{\abs{\bj} = m} \frac{1}{\bj!} {\left|\left(\bx - 
\bz\right)^{\bjx} (t - s)^{j_t}\right|}\\
\label{EQN::TAYLOR-REMAINDER}
& \leq \frac{(d + 1)^{m/2}}{m!}
h_D^{m} 
\abs{\varphi}_{\EFC{m}{D}},
\end{align}
where $h_D$ is the diameter of $D$, and we used the multinomial theorem 
($(\sum_{r=1}^{d+1}v_r)^m=\sum_{|\bj|=m}\frac{m!}{\bj!}
\bv^\bj$) and 
$|\bv|_1\le (d+1)^{1/2}|\bv|_2 \; \forall\bv\in\IC^{d+1}$.

In order to prove approximation results for solutions belonging to spaces more 
general than $\EFC{m}{Q}$, we 
introduce the averaged Taylor polynomial as presented in \cite{Duran_1983}
(see also a refined version in \cite[Def.~4.1.3]{Brenner_Scott_2007}). 
\begin{definition}[Averaged Taylor polynomial]
\label{DEF::AVERAGED-TAYLOR-POLYNOMIAL}
Let $D \subset \IR^{d + 1}$, $1 \leq 
d \in \IN$, be an open and bounded set, with diameter $h_D$, star-shaped with 
respect to the ball $B:= B_{\rho h_D}(\bz, s)$ centered at $(\bz, s) \in D$ and 
with radius $\rho h_D$, for some $0 < \rho \leq \frac{1}{2}$. If $\varphi \in 
\ESOBOLEV{m-1}{D}$, the \emph{averaged Taylor polynomial} of order $m$ (and degree $m 
- 1$) is defined as
\begin{equation}
\label{EQN::AVERAGED-TAYLOR-POLYNOMIAL}
\AvTaylor{m}{\varphi}(\bx, t) := 
\frac{1}{\abs{B}} \int_{B} \Taylor{(\bz,s)}{m}{\varphi}(\bx, t) \dV(\bz,s).
\end{equation}
\end{definition}

We recall the result of \cite[Prop.~(4.1.17)]{Brenner_Scott_2007}:
\begin{equation}\label{EQN::DQ}
D^\bj\Taylor{(\bz,s)}{m}{\varphi} = 
\Taylor{(\bz,s)}{m-|\bj|}{D^\bj\varphi},\qquad 
D^\bj\AvTaylor{m}{\varphi} = \AvTaylor{m-|\bj|}{D^\bj\varphi},\qquad
|\mj|<m.
\end{equation}
The following Bramble--Hilbert lemma provides an estimate for the error of the 
averaged Taylor polynomial, see \cite{Duran_1983} 
and \cite[Thm.~4.3.8]{Brenner_Scott_2007}.
\begin{lemma}[Bramble--Hilbert]
\label{LEMMA::Bramble--HILBERT}
Under the conditions in \Cref{DEF::AVERAGED-TAYLOR-POLYNOMIAL}, if 
$\varphi \in \ESOBOLEV{m}{D}$, then
\begin{equation}
\label{EQN::Bramble--HILBERT-LEMMA}
\Norm{\DerA{\bj}{\left(\varphi - \AvTaylor{m}{\varphi}\right)}}{L^2(D)} \leq 
C_{d,m, \rho} \:
h_D^{m-\abs{\bj}} \abs{\varphi}_{\ESOBOLEV{m}{D}}, \quad \mbox{ for } \abs{\bj} 
\leq m.
\end{equation}
\end{lemma}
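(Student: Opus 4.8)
The plan is to reduce the estimate to the case $\bj=\bzero$ and then prove that case by representing the averaged Taylor remainder as a weakly singular integral operator.

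First I would treat the two extreme values of $|\bj|$ separately. If $|\bj|=m$, then $\AvTaylor{m}{\varphi}$ is a polynomial of degree $m-1$, so $\DerA{\bj}{\AvTaylor{m}{\varphi}}=0$ and the left-hand side of \cref{EQN::Bramble--HILBERT-LEMMA} equals $\Norm{\DerA{\bj}{\varphi}}{L^2(D)}\le\abs{\varphi}_{\ESOBOLEV{m}{D}}$, which is the claim with $h_D^{0}=1$. If instead $|\bj|<m$, I would invoke the commutation identity \cref{EQN::DQ}. Setting $\psi:=\DerA{\bj}{\varphi}\in\ESOBOLEV{m-|\bj|}{D}$, it gives
\[
\DerA{\bj}{\big(\varphi-\AvTaylor{m}{\varphi}\big)}
=\DerA{\bj}{\varphi}-\AvTaylor{m-|\bj|}{\DerA{\bj}{\varphi}}
=\psi-\AvTaylor{m-|\bj|}{\psi}.
\]
Because every derivative of order $m-|\bj|$ of $\psi$ is a derivative of order $m$ of $\varphi$, we have $\abs{\psi}_{\ESOBOLEV{m-|\bj|}{D}}\le\abs{\varphi}_{\ESOBOLEV{m}{D}}$. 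Hence it suffices to establish the $\bj=\bzero$ estimate $\Norm{\varphi-\AvTaylor{m}{\varphi}}{L^2(D)}\le C_{d,m,\rho}\,h_D^{m}\abs{\varphi}_{\ESOBOLEV{m}{D}}$ for every order $m\ge1$, and then apply it to $\psi$ with $m$ replaced by $m-|\bj|$.

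For this base case I would first assume $\varphi\in\EFC{\infty}{\overline D}$; since the domain is star-shaped with respect to a ball (hence Lipschitz), such functions are dense in $\ESOBOLEV{m}{D}$ and both sides of the inequality are continuous in the $\ESOBOLEV{m}{D}$-norm, so the general case follows by density. The key tool is the integral form of the Taylor remainder,
\[
\varphi(\bx,t)-\Taylor{(\bz,s)}{m}{\varphi}(\bx,t)
=m\sum_{|\bj|=m}\frac{(\bx-\bz)^{\bjx}(t-s)^{j_t}}{\bj!}
\int_0^1\lambda^{m-1}\,\DerA{\bj}{\varphi}\big((\bz,s)+\lambda((\bx,t)-(\bz,s))\big)\di\lambda.
\]
Averaging this identity over $(\bz,s)\in B$ as in \cref{EQN::AVERAGED-TAYLOR-POLYNOMIAL}, interchanging the integrals and changing variables to $(\by,\tau)=(\bz,s)+\lambda((\bx,t)-(\bz,s))$, I would rewrite $\varphi-\AvTaylor{m}{\varphi}$ as a finite sum over $|\bj|=m$ of integral operators acting on $\DerA{\bj}{\varphi}$, each with a kernel $k_{\bj}$ supported in $D$ and bounded by $C\,|(\bx,t)-(\by,\tau)|^{m-(d+1)}$; the star-shapedness with respect to $B_{\rho h_D}$ is precisely what confines the kernel to $D$ and produces the $\rho$-dependence of the constant.

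It then remains to bound these weakly singular operators on $L^2(D)$, which I would do with the Schur test. Since $m\ge1$, spherical coordinates in $\IR^{d+1}$ give $\int_{|\bv|\le h_D}|\bv|^{m-(d+1)}\di\bv\le C\,h_D^{m}$, so both $\sup_{(\bx,t)}\int_D|k_{\bj}|$ and $\sup_{(\by,\tau)}\int_D|k_{\bj}|$ are at most $C_{d,m,\rho}\,h_D^{m}$; consequently each operator has $L^2(D)\to L^2(D)$ norm bounded by $C_{d,m,\rho}\,h_D^{m}$. Summing over the finitely many $\bj$ with $|\bj|=m$ yields $\Norm{\varphi-\AvTaylor{m}{\varphi}}{L^2(D)}\le C_{d,m,\rho}\,h_D^{m}\abs{\varphi}_{\ESOBOLEV{m}{D}}$, completing the base case and hence the lemma. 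I expect the main obstacle to be the derivation of the kernel representation with the sharp bound $|(\bx,t)-(\by,\tau)|^{m-(d+1)}$: correctly performing the change of variables, verifying that the resulting kernel is supported in $D$, and tracking its dependence on $\rho$ form the technical core, whereas the Schur-test estimate and the density argument are comparatively routine.
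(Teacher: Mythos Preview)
The paper does not give its own proof of this lemma: it states the result and immediately cites \cite{Duran_1983} and \cite[Thm.~4.3.8]{Brenner_Scott_2007} for the proof and the explicit constant. Your sketch is precisely the standard argument found in those references---reduction to $\bj=\bzero$ via the commutation identity \cref{EQN::DQ}, integral remainder averaged over the ball, change of variables yielding a kernel bounded by $|(\bx,t)-(\by,\tau)|^{m-(d+1)}$, and an $L^2$ bound via the Schur test---so it is correct and matches the cited proofs.
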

The explicit value of $C_{d,m,\rho}>0$ is stated in 
\cite[p.~986]{Duran_1983} in dependence of $d$, $m$ and $\rho$, with the 
last of these parameters as in \Cref{DEF::AVERAGED-TAYLOR-POLYNOMIAL}.
The last key ingredient is the trace inequality, 
\cite[Theorem 1.6.6]{Brenner_Scott_2007}, which
can be written
for any element $K = \Kx \times I_n \in \Th (Q)$
in our space--time setting as
\begin{align}
\Norm{\varphi}{L^2(\Kx \times \left\{t_{n-1}, t_n\right\})}^2 
\leq 
C_{\tr}\left( h_n^{-1} \Norm{\varphi}{L^2(K)}^2 + h_n 
\Norm{\partial_t \varphi}{L^2(K)} ^2\right), 
\  \forall \varphi \in \ESOBOLEV{1}{I_n; L^2(\Kx)}, 
\label{EQN::TRACE-INEQUALITY-TIME}
\\
\Norm{\varphi}{L^2(\partial \Kx \times I_n)}^2 
\leq C_{\tr} \left(h_{\Kx}^{-1} \Norm{\varphi}{L^2(K)}^2 + h_{\Kx} 
\Norm{\nabla \varphi}{L^2(K)^d}^2 \right), 
\  \forall \varphi \in L^2\left(I_n; \ESOBOLEV{1}{\Kx}\right),
\nonumber
\end{align}
where
$C_{\tr}$ only depends on the shape-regularity parameter $\mathsf{sr}(\Th)$ of 
the space mesh.


\subsection{General approximation estimate} \label{SECT::GENERAL-APPROXIMATION}
We now give a condition on the discrete Trefftz space $\bVp(K)$ and show how it 
entails $h$-approximation estimates.
In the next sections we describe concrete spaces that satisfy this condition.

\Cref{ConditionApprox} states that for any sufficiently smooth Schr\"odinger solution $\psi${, for each element $K\in \Th$, and almost every point $(\bz,s)$ in a ball $B\subset K$, the discrete Trefftz space contains an element whose Taylor polynomial centered at $(\bz,s)$ matches that of $\psi$.}
We also require {the coefficients $(a_1,\ldots,a_{n_{d,p}})$ of this approximant to be uniformly bounded with respect to the points $(\bz,s)$ in the aforementioned ball $B$}, so this is a condition on the basis rather than on the discrete 
space itself.
To allow {for} general $\psi\in H^{p+1}(K)$, whose Taylor polynomial might not be 
defined everywhere and does not guarantee enough approximation, we match 
$\Taylor{(\bz,s)}{p+1}{\psi}$ in almost every point in a ball $B$ (see 
\Cref{rem:PointDerivatives,rem:HvsC}).

\begin{condition}\label{ConditionApprox}
Let $B\subset K$ be a $(d+1)$-dimensional ball such that $K$ is star-shaped 
with respect to $B$.
Let $\{\phi_1,\ldots,\phi_{n_{d,p}}\}\subset \calC^\infty(K)$ be a basis of 
$\IT^p(K)$.
For every $\psi\in\bT(K)\cap H^{p+1}(K)$, there exists a vector-valued 
function $\ba\in L^1(B)^{n_{d,p}}$ satisfying the following two conditions
\begin{subequations}
\begin{align}
D^\bj\psi(\bz,s)&=\sum_{\ell=1}^{n_{d,p}} a_\ell(\bz,s) D^\bj\phi_\ell(\bz,s) 
\quad \text{for all } |\bj|\le p
\quad\text{and a.e. }(\bz,s)\in B,
\label{EQN::Condition1} 
\\
\N{|\ba|_1}_{L^1(B)}&=\int_{B}\sum_{\ell=1}^{n_{d,p}} 
|a_\ell(\bz,s)| \dV(\bz,s)
\le C_\star |K|^{1/2}\N{\psi}_{H^{p+1}(K)},
\label{EQN::Condition2}
\end{align}
\end{subequations}
where $C_\star>0$ might depend on $d,p,$ {and }$ \{\phi_\ell\}$ but is independent of $K$ and $\psi$.
\end{condition}

Multiplying \cref{EQN::Condition1} with 
$\frac{(\bx-\bz)^{\bj_x}(t-s)^{j_t}}{\bj!}$ and summing over $\bj$, we observe 
that \cref{ConditionApprox} implies that for every smooth Schr\"odinger 
solution $\psi$ there is an element of the discrete space $\bVp(K)$ whose Taylor 
polynomial at $(\bz,s)$ coincides with that of $\psi$:
\begin{align}
\nonumber
\Taylor{(\bz, s)}{p + 1}{\psi}(\bx,t) 
\ = \ \Taylor{(\bz, s)}{p + 1}{\sum_{\ell = 1}^{n_{d,p}} a_\ell(\bz, s) \phi_{\ell}}(\bx,t)  
\ = \ \sum_{\ell = 1}^{n_{d,p}} a_\ell(\bz, s) \Taylor{(\bz, s)}{p + 1}{\phi_{\ell}}(\bx,t) \\
\text{for a.e. }(\bz,s)\in B,\quad \forall (\bx,t)\in K. 
\label{EQN::T=T}
\end{align}

\begin{remark}\label{rem:PointDerivatives}
\Cref{ConditionApprox} requires the point value of the partial derivatives of a 
Sobolev function in almost every point of $B$.
This is to be understood
as follows.

Let $\psi\in H^{p+1}(K)$.
By Calder\'on's extension theorem \cite[Thm.~A.4]{McLean_2000}, $\psi$ can be extended to a $\widetilde\psi\in H^{p+1}(\IR^{d+1})$. 
{This allows us to apply \cite[Thm.~10.1.4]{Adams_Hedberg_1996}, which} implies that there exists a zero-measure set $\Upsilon\subset\IR^{d+1}$ such that $\widetilde\psi$ admits a ``differential of order $p$'' at every point $(\bz,s)$ of the complement of $\Upsilon$.
The differential of order $p$, as defined in \cite[Def.~10.1.3]{Adams_Hedberg_1996}, is a polynomial that coincides with the Taylor polynomial $T^{p+1}_{(\bz,s)}[\widetilde\psi]$ whenever this is defined.
With a small abuse of notation, this is the polynomial appearing {on} the left-hand side of \cref{EQN::T=T} for all $(\bz,s)\in B\setminus\Upsilon$ (while $\phi_\ell\in \calC^\infty(K)$ so their Taylor polynomials are the classical ones).
The partial derivatives $D^\bj\psi(\bz,s)$ in \eqref{EQN::Condition1} are the 
partial derivatives of $T^{p+1}_{(\bz,s)}[\psi]$ evaluated in $(\bz,s)$.
\end{remark}

\begin{proposition}
\label{PROP::H-ESTIMATE}

Let $K\in \Th(Q)$ and $\psi \in \bT(K) \cap \ESOBOLEV{p+1}{K}$, with $K$ 
satisfying 
the conditions in \Cref{DEF::AVERAGED-TAYLOR-POLYNOMIAL}. 
Assume that $\bVp(K)$ satisfies \Cref{ConditionApprox}.
Then, there exists an element $\Phi \in \bVp(K)$ and $C>0${, which depends on the constant $C_\star$ in \eqref{EQN::Condition2} but is} independent of $h_K$ and $\psi$, such that:
\begin{align}
\label{EQN::LOCAL-ESTIMATE}
\Norm{\DerA{\bj}{\left(\psi - \Phi\right)}}{L^2(K)} \leq C h_{K}^{p + 1 - 
|\bj|} 
\Norm{\psi}{\ESOBOLEV{p+1}{K}}, \qquad 0 \leq \abs{\bj} \leq p.
\end{align}
\end{proposition}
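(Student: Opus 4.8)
The plan is to interpose the averaged Taylor polynomial $\AvTaylor{p+1}{\psi}$ between $\psi$ and a carefully chosen discrete function $\Phi\in\bVp(K)$. First I would use the coefficient vector $\ba$ supplied by \Cref{ConditionApprox} to define $\Phi$ by averaging over $B$, namely $\Phi:=\sum_{\ell=1}^{n_{d,p}}\bar a_\ell\,\phi_\ell$ with $\bar a_\ell:=\frac{1}{\abs{B}}\int_B a_\ell(\bz,s)\dV(\bz,s)$. This is well defined because $\ba\in L^1(B)^{n_{d,p}}$, and it lies in $\bVp(K)$ since it is a constant-coefficient combination of the basis functions. Then, for each $\abs{\bj}\le p$, I would split via the triangle inequality
\begin{equation*}
\Norm{\DerA{\bj}{(\psi-\Phi)}}{L^2(K)}\le \Norm{\DerA{\bj}{(\psi-\AvTaylor{p+1}{\psi})}}{L^2(K)}+\Norm{\DerA{\bj}{(\AvTaylor{p+1}{\psi}-\Phi)}}{L^2(K)}.
\end{equation*}

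The first term is handled directly by the Bramble--Hilbert \Cref{LEMMA::Bramble--HILBERT}, which bounds it by $C_{d,p+1,\rho}\,h_K^{p+1-\abs{\bj}}\abs{\psi}_{\ESOBOLEV{p+1}{K}}$, already of the desired form. The real work lies in the second term. Using the coincidence of Taylor polynomials \cref{EQN::T=T} I would rewrite
\begin{equation*}
\AvTaylor{p+1}{\psi}-\Phi=\frac{1}{\abs{B}}\int_B\sum_{\ell=1}^{n_{d,p}}a_\ell(\bz,s)\Big(\Taylor{(\bz,s)}{p+1}{\phi_\ell}-\phi_\ell\Big)\dV(\bz,s),
\end{equation*}
so the discrepancy is an $\ba$-weighted average of Taylor remainders of the \emph{smooth} basis functions. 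Applying $\DerA{\bj}{}$ and commuting it through the Taylor polynomial via \cref{EQN::DQ} turns each summand into the order-$(p+1-\abs{\bj})$ remainder of $\DerA{\bj}{\phi_\ell}$, which the Lagrange estimate \cref{EQN::TAYLOR-REMAINDER} bounds pointwise on $K$ by a constant times $h_K^{p+1-\abs{\bj}}\abs{\phi_\ell}_{\EFC{p+1}{K}}$, uniformly in $(\bx,t)$.

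Since this pointwise bound does not depend on $(\bx,t)$, taking the $L^2(K)$-norm introduces a factor $\abs{K}^{1/2}$, while the integral over $B$ is controlled by $\N{|\ba|_1}_{L^1(B)}\le C_\star\abs{K}^{1/2}\Norm{\psi}{\ESOBOLEV{p+1}{K}}$ from \cref{EQN::Condition2}. Collecting the factors, the second term is bounded by $C(d,p)\,C_\star M\,\frac{\abs{K}}{\abs{B}}\,h_K^{p+1-\abs{\bj}}\Norm{\psi}{\ESOBOLEV{p+1}{K}}$, where $M:=\max_\ell\abs{\phi_\ell}_{\EFC{p+1}{K}}$. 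The last step is to absorb the geometric ratio $\abs{K}/\abs{B}$, which is bounded by $\rho^{-(d+1)}$ (up to the volume of the unit ball) by \Cref{DEF::AVERAGED-TAYLOR-POLYNOMIAL}, independently of $h_K$, and to note that $M$ is a fixed constant independent of $h_K$ because the $\phi_\ell$ are unimodular complex exponentials whose derivatives up to order $p+1$ are bounded by powers of the fixed frequencies $k_\ell$.

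I expect the main obstacle to be the careful bookkeeping that makes the final constant $h_K$-independent: on one hand the scale-invariance of $\abs{K}/\abs{B}$ (both behaving like $h_K^{d+1}$), and on the other hand the exact cancellation of the two factors $\abs{K}^{1/2}$ — one from the spatial $L^2(K)$-norm and one from the $L^1$-bound on $\ba$ in \cref{EQN::Condition2} — against $\abs{B}\sim h_K^{d+1}$. Once these are verified, summing the two contributions yields \cref{EQN::LOCAL-ESTIMATE} with $C$ depending only on $d,p,\rho$, on the basis $\{\phi_\ell\}$ through $M$, and on the constant $C_\star$, but not on $h_K$ or $\psi$.
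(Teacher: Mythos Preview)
Your proposal is correct and follows essentially the same argument as the paper: the same definition of $\Phi$ by averaging the coefficients $a_\ell$ over $B$, the same triangle-inequality split through $\AvTaylor{p+1}{\psi}$, Bramble--Hilbert for the first piece, and for the second piece the chain \cref{EQN::T=T}--\cref{EQN::DQ}--\cref{EQN::TAYLOR-REMAINDER}--\cref{EQN::Condition2} followed by the geometric bound $|K|/|B|\le \rho^{-(d+1)}$. Your remark that $M=\max_\ell|\phi_\ell|_{\EFC{p+1}{K}}$ is $h_K$-independent for the exponential basis is a helpful observation that the paper leaves implicit.
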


\begin{proof}
Let $\ba\in L^1(B)$ be the coefficient function defined 
by \Cref{ConditionApprox}.
We define the discrete Trefftz function $\Phi\in \bVp(K)$ as
\begin{align}
\nonumber
\Phi(\bx, t) & := \frac{1}{\abs{B}}\int_B \sum_{\ell = 1}^{n_{d,p}} 
a_\ell(\bz, s) \phi_\ell(\bx, t) \dV(\bz,s)  \\
 & = \frac{1}{\abs{B}} 
\sum_{\ell = 1}^{n_{d,p}}\left(\int_B a_\ell(\bz, s) \dV(\bz, s)\right) 
\phi_\ell(\bx,t). \label{EQN::Phi}
\end{align}
Since $\Phi \in\EFC{\infty}{\overline{K}}$, by the triangle inequality, for
$|\bj|\le p$ we have 
\begin{equation}\label{EQN::TRIA}
\Norm{\DerA{\bj}{\left(\psi - \Phi\right)}}{L^2(K)} \leq  
\Norm{\DerA{\bj}{\left(\psi - 
\AvTaylor{p+1}{\psi}\right)}}{L^2(K)}   + 
\Norm{\DerA{\bj}{\left(\AvTaylor{p+1}{\psi} - \Phi\right)}}{L^2(K)},
\end{equation}
{where $\AvTaylor{p+1}{\cdot}$ is the averaged Taylor polynomial  of order $p + 1$ defined in \cref{EQN::AVERAGED-TAYLOR-POLYNOMIAL}.}

The first term is bounded by the Bramble--Hilbert \cref{LEMMA::Bramble--HILBERT}, while for the second term we first derive a 
pointwise estimate for all $(\bx, t) \in K$ and $\abs{\bj} \leq p$ using the 
multivariate Taylor's theorem and the fact that $\phi_{\ell} \in 
\EFC{\infty}{K}$:
\begin{align*}
\big| D^{\bj} & \big( \AvTaylor{p+1}{\psi} (\bx,t)  - \Phi(\bx,t)\big)\big| \\
&\overset{\cref{EQN::AVERAGED-TAYLOR-POLYNOMIAL}, \cref{EQN::Phi}}= 
\abs{\frac1{|B|}\int_B D^\bj\bigg(\Taylor{(\bz,s)}{p+1}{\psi} (\bx,t)
-\sum_{\ell=1}^{n_{d,p}}a_\ell(\bz,s)\phi_\ell(\bx,t)\bigg)\dV(\bz,s)}
\\
&\quad\overset{\cref{EQN::T=T}}=
\abs{\frac1{|B|}\int_B \sum_{\ell=1}^{n_{d,p}} a_\ell(\bz,s)
D^\bj\bigg(\Taylor{(\bz,s)}{p+1}{\phi_\ell} (\bx,t)-\phi_\ell(\bx,t)\bigg)\dV(\bz,s)}
\\
& \quad \overset{\cref{EQN::DQ}}= 
\abs{\frac1{|B|}\int_B \sum_{\ell=1}^{n_{d,p}} a_\ell(\bz,s)
\bigg(\Taylor{(\bz,s)}{p+1-|\bj|}{D^\bj\phi_\ell} (\bx,t)-D^\bj\phi_\ell(\bx,t)\bigg)\dV(\bz,s)}
\\
& \quad \overset{\cref{EQN::TAYLOR-REMAINDER}}\leq 
\frac{(d+1)^{\left(p + 1 - \abs{\bj}\right)/2}}
{\abs{B}(p+1-\abs{\bj})!} h_K^{p+1-\abs{\bj}}
\max_{\ell = 1, \ldots, n_{d,p}} 
\left\{\abs{\phi_{\ell}}_{\EFC{p+1}{K}}\right\} 
\int_B 
\abs{\ba(\bz, s)}_1 \dV(\bz, s)\\ 
& \quad \overset{\cref{EQN::Condition2}}
\leq  
\frac{|K|^{1/2}(d+1)^{\left(p + 1 - \abs{\bj}\right)/2}}
{|B|(p+1-\abs{\bj})!} h_K^{p+1-\abs{\bj}}
\max_{\ell = 1, \ldots, n_{d,p}} 
\left\{\abs{\phi_{\ell}}_{\EFC{p+1}{K}}\right\} 
C_\star \N{\psi}_{H^{p+1}(K)}.
\end{align*}
Then
\begin{align*}
&\Norm{\DerA{\bj}{\left(\AvTaylor{p+1}{\psi} - \Phi\right)}}{L^2(K)} 
\\&\leq 
\frac{|K|}{|B|}
\frac{(d+1)^{\left(p + 1 - \abs{\bj}\right)/2}}{(p+1-\abs{\bj})!}
h_K^{p + 1-\abs{\bj}}
\max_{\ell = 1, \ldots, n_{d,p}}
\left\{\abs{\phi_{\ell}}_{\EFC{p+1}{K}}\right\}
C_\star\N{\psi}_{H^{p+1}(K)},
\end{align*}
which, combined with $|K|\le |B_{h_K}|\le \rho^{d+1}|B_{\rho 
h_K}|=\rho^{d+1}|B|$, the triangle inequality \cref{EQN::TRIA} and 
Bramble--Hilbert lemma estimate \cref{EQN::Bramble--HILBERT-LEMMA}, completes 
the proof.
\end{proof}

No special property of the Schr\"odinger equation has been used in the proof 
of \cref{PROP::H-ESTIMATE}:
the same result extends to any linear PDE for which one has at hand a discrete Trefftz space that can reproduce the Taylor polynomials of any PDE solution.

\begin{remark}\label{rem:HvsC}
The use of averaged Taylor polynomials $\AvTaylor{m}{\cdot}$ 
\cref{EQN::AVERAGED-TAYLOR-POLYNOMIAL} {allows us} to treat any $\psi\in H^{p+1}(K)$.
Under the stronger assumption $\psi\in C^{p+1}(K)$ the argument of  
\Cref{PROP::H-ESTIMATE} simplifies considerably:
one could use the standard Taylor polynomial $\Taylor{(\bz,s)}{m}{\cdot}$,
assume that $K$ is star-shaped with respect to a point,
and require the identity \cref{EQN::Condition1} for a single point $(\bz,s)$ 
only.
\end{remark}


\subsection{Best approximation in the (1+1)-dimensional case}
\label{SECT::APPROX-1D}

The next proposition shows that, in one space dimension, the choice of 
exponential basis functions \cref{EQN::BASIS-FUNCTIONS} with different values of 
$k_\ell$ (and equal direction $d_\ell=1$) is enough to ensure 
\cref{ConditionApprox} and the approximation properties of $\IT^p(K)$.
Only $2p+1$ degrees of freedom per element are needed to obtain $h^p$ 
convergence.
An example of the basis \eqref{EQN::BASIS-1+1} for $p = 3$ is plotted in 
\cref{fig:1Dbasis}.

\begin{proposition}
\label{PROP::POINTWISE-TAYLOR-1D}
Let $d=1$, $p\in\IN$, $n_{1,p}=2p+1$ and the parameters 
$\{k_\ell\}_{\ell=1}^{2p+1}\subset\IR$ be all different from one another.
Let
\begin{equation}\label{EQN::BASIS-1+1}
\phi_{\ell} (x,t)=\exp\big[i\big(k_\ell x - (k_\ell^2 + V|_K)t\big)\big],
\qquad \ell=1,\ldots,2p+1,
\end{equation}
be the basis of the discrete Trefftz space $\IT^p(K)$.
Then \cref{ConditionApprox} is satisfied.
\end{proposition}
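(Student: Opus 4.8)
The plan is to verify \Cref{ConditionApprox} by matching, at a.e.\ center $(\bz,s)\in B$, the degree-$p$ Taylor polynomial of $\psi$ with that of a linear combination $\Phi_\ba:=\sum_{\ell=1}^{2p+1}a_\ell\phi_\ell$, exploiting that $\psi$ and all the $\phi_\ell$ solve the same Schr\"odinger equation on $K$ with the constant potential $V|_K$. By the computation in \cref{EQN::T=T} this matching is exactly equivalent to \cref{EQN::Condition1}. The starting point is the explicit formula $\DerA{(j_x,j_t)}{\phi_\ell}=(\ri k_\ell)^{j_x}\big(-\ri(k_\ell^2+V|_K)\big)^{j_t}\phi_\ell$, which follows from $\partial_x\phi_\ell=\ri k_\ell\phi_\ell$ and $\partial_t\phi_\ell=-\ri(k_\ell^2+V|_K)\phi_\ell$, together with the fact that $|\phi_\ell|\equiv1$ on the (real) domain $K$, so that $\phi_\ell(\bz,s)\neq0$.

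First I would reduce the \emph{a priori} overdetermined system \cref{EQN::Condition1} to a square one. On any Schr\"odinger solution $u$ the equation yields the operator identity $\partial_x^2u=(V|_K-\ri\partial_t)u$, whence two facts follow by iteration: every $\DerA{\bj}{u}(\bz,s)$ with $|\bj|\le p$ is a linear combination, with $V|_K$- and $p$-dependent coefficients, of the pure-space quantities $w_m:=\partial_x^mu(\bz,s)$ for $0\le m\le 2p$; and, conversely, each $w_{p+k}=\partial_x^{p-k}(V|_K-\ri\partial_t)^ku\big|_{(\bz,s)}$ with $1\le k\le p$ is a linear combination of the derivatives $\DerA{\bj}{u}(\bz,s)$ of order $|\bj|\le p$ \emph{only}. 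Hence the degree-$p$ Taylor polynomials of Schr\"odinger solutions form a space of dimension exactly $2p+1$, parametrized by $(w_0,\dots,w_{2p})$. Defining $w_m^\psi$ from the order-$\le p$ derivatives of $\psi$ in this way, \cref{EQN::Condition1} becomes equivalent to the $(2p+1)\times(2p+1)$ system $\sum_{\ell=1}^{2p+1}b_\ell(\ri k_\ell)^m=w_m^\psi$, $m=0,\dots,2p$, in the unknowns $b_\ell:=a_\ell\,\phi_\ell(\bz,s)$.

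The matrix $[(\ri k_\ell)^m]_{m,\ell}$ is a Vandermonde matrix with nodes $\ri k_1,\dots,\ri k_{2p+1}$, which are distinct since the $k_\ell$ are; it is therefore invertible, giving a unique $b_\ell(\bz,s)$ and hence $a_\ell(\bz,s)=b_\ell(\bz,s)/\phi_\ell(\bz,s)$ with $|a_\ell|=|b_\ell|$. Because $\Phi_\ba$ is itself a Schr\"odinger solution whose pure-space coordinates at $(\bz,s)$ agree with $w_m^\psi$ for all $m\le2p$, its degree-$p$ Taylor polynomial coincides with that of $\psi$, yielding \cref{EQN::Condition1} for \emph{all} $|\bj|\le p$. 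For \cref{EQN::Condition2}, the entries of the inverse Vandermonde depend only on $\{k_\ell\}$, so $\sum_\ell|a_\ell(\bz,s)|\le C\sum_{m=0}^{2p}|w_m^\psi|\le C'\sum_{|\bj|\le p}|\DerA{\bj}{\psi}(\bz,s)|$, with $C'$ depending on $p$, $\{k_\ell\}$ and $V|_K$ (admissible, since \Cref{ConditionApprox} allows $C_\star$ to depend on $\{\phi_\ell\}$). Integrating over $B$, applying Cauchy--Schwarz in each term and using $|B|\le|K|$ then gives $\N{|\ba|_1}_{L^1(B)}\le C_\star|K|^{1/2}\N{\psi}_{H^{p+1}(K)}$.

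The hard part is the low regularity: $\psi$ is only assumed in $H^{p+1}(K)$, so its pointwise derivatives must be read off the differential of order $p$ of \Cref{rem:PointDerivatives}, and the coordinates $w_{p+k}^\psi$, which formally involve space derivatives up to order $2p$, must instead be \emph{defined} through the order-$\le p$ derivatives as above (these exist at a.e.\ $(\bz,s)$). The construction is legitimate provided this differential of order $p$ genuinely lies in the $(2p+1)$-dimensional space of Schr\"odinger Taylor polynomials for a.e.\ $(\bz,s)$, i.e.\ the consistency relations $\ri\DerA{(i_x,i_t+1)}{\psi}+\DerA{(i_x+2,i_t)}{\psi}-V|_K\DerA{(i_x,i_t)}{\psi}=0$ hold a.e.\ for $|(i_x,i_t)|\le p-2$. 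This follows by differentiating $\ri\partial_t\psi+\partial_{xx}\psi-V|_K\psi=0$ up to $p-2$ times --- licit since all terms then remain in $L^2(K)$ because $\psi\in H^{p+1}(K)$ --- and evaluating at Lebesgue points; measurability of $\ba$ is immediate, since it solves a linear system with a fixed matrix and measurable right-hand side.
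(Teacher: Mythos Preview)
Your proof is correct and follows the same overall strategy as the paper: reduce the overdetermined Taylor-matching system \cref{EQN::Condition1} to a square $(2p+1)\times(2p+1)$ system by exploiting the Schr\"odinger relations among Taylor coefficients, identify the reduced system as a Vandermonde system, solve it, and bound the solution. The difference lies in the choice of coordinates on the $(2p+1)$-dimensional space $\calD$ of ``Schr\"odinger Taylor polynomials''. The paper parametrizes $\calD$ by the coefficients $C_{j_x,j_t}$ with $j_x\in\{0,1\}$ (the shaded region in \Cref{FIG::INDEPENDENT-RELATIONS}); this yields a square submatrix $\bM_{\msquare}=\bV\bD_{z,s}$ whose rows are $q_{j_x,j_t}(k_\ell)$ for polynomials $q_{j_x,j_t}$ of degrees $0,1,\dots,2p$, and an additional lower-triangular row transformation is then needed to reach a standard Vandermonde matrix in the $k_\ell$. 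You instead parametrize by the pure-space derivatives $w_m=\partial_x^m u(z,s)$ for $m=0,\dots,2p$ (with $w_{p+k}$ for $k\ge1$ \emph{defined} through the PDE as a combination of order-$\le p$ derivatives), which immediately produces a standard Vandermonde matrix with nodes $\ri k_\ell$. Your route is a little more direct at the linear-algebra step and more explicit about the low-regularity justification (the a.e.\ validity of the consistency relations for $\psi\in H^{p+1}(K)$); the paper's choice has the advantage that its $2p+1$ coordinates are a literal subset of the rows of the original system, so no separate argument is needed to show that matching the reduced coordinates implies matching all of \cref{EQN::Condition1}.
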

 
\begin{figure}[htb]
\includegraphics[width=\textwidth,clip,trim=80 20 70 20]{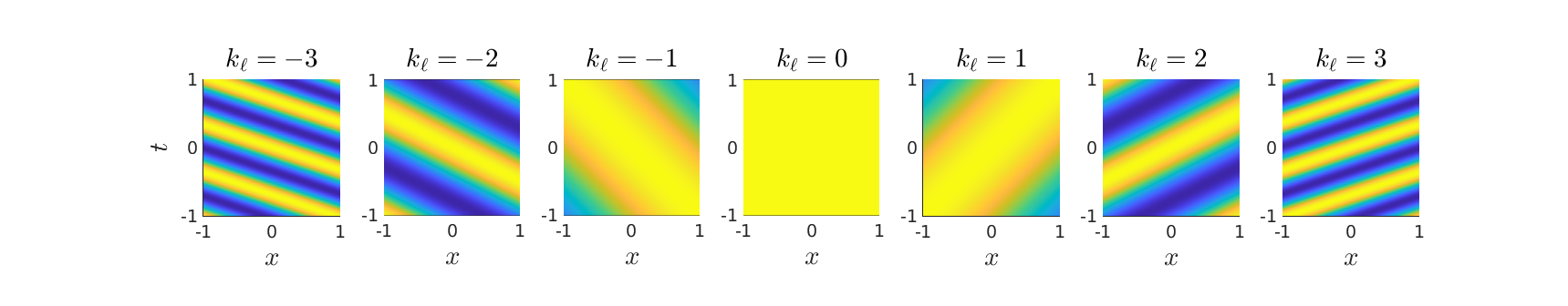}
\caption{The real parts ($\cos(k_\ell x-k_\ell^2t)$) of the Trefftz basis 
functions $\phi_\ell$ of \eqref{EQN::BASIS-1+1} for the potential $V=0$ and 
$p=3$, plotted on the space--time square $(x,t)\in(-1,1)\times(-1,1)$.
The parameters $k_\ell$ are chosen as $k_\ell=-3,-2,\ldots,3$.
These functions can be thought as plane waves $\phi_\ell(x,t)=\ee^{\ri \kappa (v_x,v_t)^\top(x,t)}$ in the $xt$ plane with different wavenumbers $\kappa=\sqrt{k_\ell^2+k_\ell^4}$ and directions $(v_x,v_t)=(k_\ell, -k_\ell^2)/\sqrt{k_\ell^2+k_\ell^4}$.
The space- and time-frequencies increase linearly and quadratically with 
$|k_\ell|$, respectively.
The space spanned by these $n_{1,3}=7$ basis functions approximates Schr\"odinger solutions with the same convergence rates of cubic polynomials.
}
\label{fig:1Dbasis}
\end{figure}

\begin{proof}
Let $\psi\in\bT(K)\cap H^{p+1}(K)$ and let $\Upsilon$ be the zero-measure set 
introduced in \cref{rem:PointDerivatives}.
Let $(z,s)\in B\setminus \Upsilon$.
Since all the derivatives of $\psi$ of order at 
most  $p$ are defined at $(z, s)$ and each basis function 
$\phi_\ell \in \EFC{\infty}{K}$, the Taylor polynomials centered at $(z, s)$ of 
order $p + 1$ of $\psi$ and each $\phi_\ell$ can be written as:
\begin{subequations}
\begin{align}
\label{EQN::ATP-PSI} 
\Taylor{(z,s)}{p+1}{\psi}(x, t) & = \sum_{j_x=0}^p \sum_{j_t = 0}^{p - j_x} 
b_\bj(x - z)^{j_x}(t - s)^{j_t}, \qquad \mbox{ where } \bj = (j_x, j_t),\\
\label{EQN::ATP-PHI}
\Taylor{(z,s)}{p+1}{\phi_{\ell}}(x, t) & = \sum_{j_x=0}^p \sum_{j_t = 0}^{p - 
j_x} M_{\bj, \ell}(x - z)^{j_x}(t - s)^{j_t} \quad \mbox{ for }\ell = 
1,\ldots, n_{1,p},
\end{align}
\end{subequations}
{where $\left\{b_\bj\right\}$ and $\left\{M_{\bj, \ell}\right\}$ denote the Taylor polynomial coefficients for $\psi$ and $\phi_{\ell}$, respectively, as in \Cref{DEF::TAYLOR-POLYNOMIAL}}.
We aim to prove that there exists $\ba{(z,s)} \in \IC^{2p+1}$ that 
satisfies 
\cref{EQN::Condition1}, i.e., 
$$\sum_{\ell = 1}^{n_{1,p}} a_{\ell}(z,s) M_{\bj, \ell} = b_{\bj},
\quad 
\mbox{for } |\bj| \leq p, \quad \bj = (j_x, j_t).
$$
This can be arranged as a rectangular linear system of the form 
$\bM \,\ba(z,s) = \bb$, with $\bb \in \IC^{r_p}$ and $\bM \in \IC^{r_p \times 
n_{1,p}}$, $r_p = (p+1)(p+2)/2{\ge n_{1,p}}$. 

The case $p = 1$ is rather special, since $n_{1,1} = r_1 = 3$ and
the matrix $\bM$ is square; hence we just need to show that it is not singular. 
For $p \geq 2$, {in order to prove the existence of such an $\ba(z,s)$, we need to show that $\bb\in\IMG\bM$.
To this purpose, after equation \eqref{EQN::TAYLOR-COEFFICIENTS-RELATION} below, we define a set $\calD\subset\IC^{r_p}$ that contains $\{\bb\}\cup\IMG\bM$, and the rest of the proof is devoted to show that $\dim\calD=\RANK\bM$.}

We first note that for all smooth 
$\varphi \in \bT(K)$ and each multi-index 
$\bj$, with $\abs{\bj} \leq p$, $\DerA{\bj}{\varphi}(z,s) = 
\DerA{\bj}{\Taylor{(z,s)}{p + 1}{\varphi}}(z,s)$.
Since $\varphi$ and its derivatives satisfy the Schr\"odinger equation, this 
implies that $D^\bj\Taylor{(z,s)}{p + 1}{\varphi}$ satisfies the same equation 
at the single point $(z,s)$:
\begin{equation}\label{eq:SchrDT}
\bigg(\Big(i\derp{}t+\Delta-V\Big)D^\bj\Taylor{(z,s)}{p + 
1}{\varphi}\bigg)(z,s)=0, 
\quad  \tfor p \geq 2 \; \tand \; |\bj|\le p-2. 
\end{equation}
We want to express these equations in terms of the coefficients of 
$\Taylor{(z,s)}{p+1}{\varphi}(x,t)$.
Setting $\bC\in\IC^{r_p}$ to be the vector with components $C_{j_x\, j_t}:=\frac1{\bj!}D^\bj\varphi(z,s)$ {with $\bj=(j_x,j_t)$}, so that 
$\Taylor{z,s}{p+1}{\varphi}(x,t)=\sum_{|\bj|\le p} 
C_{j_x\,j_t}(x-z)^{j_x}(t-s)^{j_t}$, we have
\begin{align*}
\dfdt{}\Taylor{z,s}{p+1}{\varphi}(x,t) & = \sum_{j_x = 0}^p 
\sum_{j_t=0}^{p-1-j_x} C_{j_x (j_t+1)}(j_t+1)(x- z)^{j_x}(t-s)^{j_t}, \\
\frac{\partial^2}{\partial x^2}\Taylor{z,s}{p+1}{\varphi}(x,t) & 
= \sum_{j_x = 0}^{p - 2} \sum_{j_t=0}^{p-2-j_x} 
C_{(j_x + 2) {j_t}}(j_x+1)(j_x+2)(x- z)^{j_x} (t-s)^{j_t}.
\end{align*}
Expanding \eqref{eq:SchrDT}, we get the following $p(p-1)/2$ relations between 
the coefficients of the 
Taylor polynomial of $\varphi$:
\begin{align}
\label{EQN::TAYLOR-COEFFICIENTS-RELATION}
i(j_t+1)C_{j_x(j_t+1)} + (j_x+1)(j_x+2)C_{(j_x+2)j_t}= V C_{j_x,j_t},
\\ \tfor |\bj| \le p - 2, \quad \bj = (j_x, j_t). \nonumber
\end{align}
We define $\calD : = \left\{\bC \in \IC^{r_p}\ \big| \ \bC \mbox{ 
satisfies \cref{EQN::TAYLOR-COEFFICIENTS-RELATION}}\right\}$: since 
$\psi,\phi_\ell\in\bT(K)$, it is evident 
that $\bb \in \calD$ and $\IMG{\bM} \subset \calD$.

The key point of the proof is to show that 
$\RANK{\bM} = \mbox{dim}(\calD) = 2p + 1$, which guarantees that the 
overdetermined linear system 
$\bM \ba(z,s) = \bb$ has a  unique solution.

\Cref{FIG::INDEPENDENT-RELATIONS} illustrates the relations that define $\calD$ 
for both cases $V = 0$ and $V \neq 0$.
It shows that given the $2p+1$ entries $C_{j_x j_t}$ of $\bC\in \calD$ 
corresponding to $j_x {\in\{ 0, 1\}}$, all other entries are uniquely 
determined by the conditions \cref{EQN::TAYLOR-COEFFICIENTS-RELATION}.
Therefore
\[\RANK{\bM} \leq \DIM{\calD} \leq 2p + 1.\]

\begin{figure}[htb]
\centering
\subfloat[Case $V = 0$ ]{
\begin{tikzpicture}[scale=.5]
\fill[lightgray](-.4,-.4)--(1.4,-.4)--(1.4,7.4)--(-.4,7.4);
\draw[step=1cm,gray,very thin] (0,0) grid (7,7);
\draw[very thick,->](0,0)--(8,0);\draw[very thick,->](0,0)--(0,8);
\draw(8.7,0)node{$j_x$};\draw(-1,7.8)node{$j_t$};
\draw(-.7,7)node{$p$};\draw(-.7,0)node{$0$};
\draw(7,-.7)node{$p$};\draw(0,-.7)node{$0$};
\foreach \x in {0,...,7}\foreach \y in {\x,...,7} { \fill (\x,7-\y) circle(.2); }
\foreach \y in {1,...,6}\foreach \x in {\y,...,6} { \draw[thick](6-\x,\y)--(8-\x,\y-1); }
\draw(7.8,7.5)node{$C_{j_x\,j_t}$};
\end{tikzpicture}
}
\hfill
\subfloat[Case $V \neq 0$]{
\begin{tikzpicture}[scale=.5]
\fill[lightgray](-.4,-.4)--(1.4,-.4)--(1.4,7.4)--(-.4,7.4);
\draw[step=1cm,gray,very thin] (0,0) grid (7,7);
\draw[very thick,->](0,0)--(8,0);\draw[very thick,->](0,0)--(0,8);
\draw(8.7,0)node{$j_x$};\draw(-1,7.8)node{$j_t$};
\draw(-.7,7)node{$p$};\draw(-.7,0)node{$0$};
\draw(7,-.7)node{$p$};\draw(0,-.7)node{$0$};
\foreach \x in {0,...,7}\foreach \y in {\x,...,7} { \fill (\x,7-\y)circle(.2); }
\foreach \y in {1,...,6}\foreach \x in {\y,...,6} 
    { \draw[thick](6-\x,\y)--(6.5-\x,\y-.5)--(8-\x,\y-1);
    \draw[thick](6-\x,\y-1)--(6.5-\x,\y-.5);
    \draw[fill=white,thick](6.5-\x,\y-.5) circle(.15);}
\draw(7.8,7.5)node{$C_{j_x\,j_t}$};
\fill (7.5,4) circle(.2); \draw(9.1,4)node{\quad coefficient};
\draw[fill=white,thick](7.5,3) circle(.15);\draw(8.8,3)node{\quad relation};
\end{tikzpicture}
}
\caption[]{Illustration of the relations defining the set $\calD$ in the 
(1+1)-dimensional case. 
The black dots in the $(j_x,j_t)$ plane represent the coefficients $C_{j_x\,j_t}$ of $\bC\in\calD\subset\IC^{r_p}$, $r_p=\frac{(p+1)(p+2)}2$.
In the case $V\ne0$ (right panel), each shape 
\begin{tikzpicture}[scale=.35]
\fill (0,0)circle(.2); \fill (2,0)circle(.2); \fill (0,1)circle(.2); 
\draw (0,0)--(.5,.5)--(2,0);
\draw (0,1)--(.5,.5);
\draw[fill=white,thick](.5,.5) circle(.15);
\end{tikzpicture}
connects three black dots located at the points $(j_x,j_t+1)$, $(j_x,j_t)$ and $(j_x+2,j_t)$: this shape represents one of the equations \eqref{EQN::TAYLOR-COEFFICIENTS-RELATION} which, given $C_{j_x(j_t+1)}$ and $C_{j_x\,j_t}$, allows to compute $C_{(j_x+2)j_t}$.
If the $2p+1$ values with $j_x\in\{0,1\}$ (corresponding to the nodes in the 
shaded region) are given, then these relations uniquely determine all the other 
coefficients, which can be computed sequentially by proceeding left to right in 
the diagram. 
In the figure $p=7$, the number of nodes is $r_p=36$, the number of nodes in the 
shaded region is $n_{1,p}=15$, the number of relations is $r_p-n_{1,p}=21$.
The case $V=0$ (left panel) is slightly simpler: the coefficient $C_{(j_x+2)j_t}$ is determined by $C_{j_x(j_t+1)}$ only and each relation in \eqref{EQN::TAYLOR-COEFFICIENTS-RELATION} is depicted as a 
segment~\begin{tikzpicture}[scale=.3]
\fill (2,0)circle(.2); \fill (0,1)circle(.2); 
\draw (0,1)--(2,0);
\end{tikzpicture}.
\label{FIG::INDEPENDENT-RELATIONS}}
\end{figure}

The rest of the proof consists in proving that $\RANK{\bM} \geq 2p + 1${, i.e.\ that $\bM$ is full-rank}.
To do so, we first
recall that $\phi_{\ell}(x, t) = \exp[i(k_\ell x - \left(k_{\ell}^2 + 
V\right)t)]$ from the basis definition \eqref{EQN::BASIS-FUNCTIONS}, and
observe that the entries of matrix $\bM$ are given by
$$
M_{\bj,\ell} = \frac{1}{j_x!j_t!} \DerA{\bj}{\phi_{\ell}}(z,s) = 
q_{j_x,j_t}(k_\ell) \phi_{\ell}(z,s),\quad \bj = (j_x, j_t), \quad \ell = 1, 
\ldots, 2p + 1,
$$
where $q_{j_x,j_t}(k) = \frac{1}{{j_x!j_t!}} (ik)^{j_x} \left(-i\left(k^2 + V\right)\right)^{j_t}$
is a complex-valued polynomial of degree exactly $j_x+2j_t$.
We define a square submatrix $\bM_{\msquare}$ of the matrix $\bM$ by taking the rows corresponding to $j_x \in\{0, 1\}$. This, in 
turn, can be decomposed as 
$\bM_{\msquare} = \bV \bD_{z,s}$, 
where $\bD_{z,s} = \mbox{diag}(\phi_1(z, s), \ldots, \phi_{2p + 1}(z, s))$ 
and the Vandermonde-like matrix $\bV \in \IC^{(2p + 
1)\times (2p + 1)}$ is given by
\begin{align}
\label{EQN::VANDERMONDE-MATRIX}
\bV & = \left(
\begin{tabular}{cccc}
$q_{0,0}(k_1)$ & $q_{0,0}(k_2)$ & \ldots & 
$q_{0,0}(k_{2p+1})$ \\
$q_{1,0}(k_1)$ & $q_{1,0}(k_2)$ & \ldots & 
$q_{1,0}(k_{2p+1})$ \\
\vdots & \vdots &  & \vdots \\
$q_{0,j_t}(k_1)$ & $q_{0,j_t}(k_2)$ & \ldots & 
$q_{0,j_t}(k_{2p+1})$ \\
$q_{1,j_t}(k_1)$ & $q_{1,j_t}(k_2)$ & \ldots & 
$q_{1,j_t}(k_{2p+1})$ \\
\vdots & \vdots &  & \vdots \\
$q_{0,p-1}(k_1)$ & $q_{0,p-1}(k_2)$ & \ldots & 
$q_{0,p1}(k_{2p+1})$ \\
$q_{1,p-1}(k_1)$ & $q_{1,p-1}(k_2)$ & \ldots & 
$q_{1,p-1}(k_{2p+1})$ \\
$q_{0,p}(k_1)$ & $q_{0,p}(k_2)$ & \ldots & 
$q_{0,p}(k_{2p+1})$ \\
\end{tabular}
\right).
\end{align}   
We observe that $V_{m, \ell} = p_{m-1}(k_\ell),\ m = 1, \ldots, 2p + 1$, for some polynomials $p_m(\cdot)$ of degree $m$.
Therefore there exists a lower triangular matrix $\bP$ such that 
$\bP\bV=\widetilde\bV$, where $\widetilde V_{m,\ell}=k_\ell^{m-1}$ for 
$m,\ell=1,\ldots,2p+1$.
(The entries of the $m$th row of the inverse of $\bP$ are the coefficients of the monomial expansion of the polynomial $p_{m-1}$.)
This means that $\widetilde\bV$ is a Vandermonde matrix, so it is invertible \cite[\S22.1]{Higham_2002} (recall that all $k_{\ell}$ are different from one another).
We conclude that the matrix $\bV$ is invertible, independent of $(z,s)$ and 
$\bM$ is full rank (namely, $\RANK\bM=2p+1$).

Denoting by $\bb_{\msquare}$ the subvector of $\bb$ corresponding to the 
indices $j_x \in\{0,1\}$, the unique solution of the linear system $\bM \ba(z,s) 
= \bb$ is $\ba(z,s) = \bM_{\msquare}^{-1} \bb_{\msquare}$ and satisfies 
condition~\eqref{EQN::Condition1}. Moreover, the following bound 
holds
\begin{equation}
\label{EQN::BOUND-A-1NORM}
\abs{\ba(z,s)}_1 \leq \MatrixNorm{\bV^{-1}}{1} 
\MatrixNorm{\bD_{z,s}^{-1}}{1}\abs{\bb_{\msquare}}_{1},
\end{equation}
where $\MatrixNorm{\bD^{-1}_{z,s}}{1} = 1$\ for all 
$(z,s)\in B$.
We now recall that $(z,s)$ was chosen arbitrarily in $B\setminus\Upsilon$.
Setting $\ba(z,s)=\bzero$ and $\bb_{\msquare}(z,s)=\bzero$ for $(z,s)\in 
\Upsilon$ (which has zero measure), recalling from \eqref{EQN::ATP-PSI} that 
$b_\bj=\frac1{\bj!}D^\bj\psi$, and integrating \cref{EQN::BOUND-A-1NORM} over 
$B$, we obtain  
\begin{align*}
\N{|\ba|_1}_{L^1(B)}
&\le \MatrixNorm{\bV^{-1}}1 \N{\abs{\bb_{\msquare}}_1}_{L^1(B)}
\le \MatrixNorm{\bV^{-1}}1 \sqrt{2p+1}\N{\abs{\bb_{\msquare}}_2}_{L^1(B)}\\
&\le \MatrixNorm{\bV^{-1}}1 \sqrt{2p+1} \sqrt{|B|}\N{\psi}_{H^{p+1}(B)}.
\end{align*}
This implies the assertion \cref{EQN::Condition2} with 
$C_\star=\MatrixNorm{\bV^{-1}}{1} \sqrt{2p+1}$, since $|B|\le|K|$.
\end{proof}

{Since the constant $C_\star$ depends on the norm of the inverse of the Vandermonde matrix $\bV$, which in turn depends on the parameters $\left\{k_\ell\right\}_{\ell = 1}^{2p + 1}$, the choice of the parameters defining the Trefftz basis functions has an important influence on the accuracy of the method; this is illustrated in the numerical experiments in \Cref{s:NumExp1D}.
}

\subsection{Best approximation in the (2+1)-dimensional case}
\label{SECT::APPROX-2D}
Following the same strategy of
the previous section, in \Cref{PROP::POINTWISE-TAYLOR-2D} we show that, for a 
sensible choice of the parameters $\left\{k_\ell\right\}$ and the directions 
$\left\{\bd_{\ell}\right\}$ in \eqref{EQN::BASIS-FUNCTIONS}, 
\Cref{ConditionApprox} is true for $\bVp(\Th)$ in the $(2 + 1)$-dimensional case. 

The basis functions $\phi_{m,\lambda}$ are plane waves in space indexed by two 
parameters: $m$ identifying the wavenumber $k_m$, and $\lambda$ identifying the 
propagation direction $\theta_{m,\lambda}$.
For every wavenumber we take a different number of directions following a strategy similar to that used for the construction of plane-wave Trefftz bases for the 3D Helmholtz equation in \cite[Lemma~4.2]{Moiola_Hiptmair_Perugia_2011}.
The time-dependence of each basis element is harmonic with frequency $k_m^2+V|_K$.
A sample basis   for $p = 2$ is shown in \cref{fig:2Dbasis}.

\begin{proposition}
\label{PROP::POINTWISE-TAYLOR-2D}
Let $d=2$ and $n_{2,p}=(p+1)^2$.
Let the parameters $k_m$ and $\theta_{m,\lambda}$ satisfy the following 
conditions:
\begin{align*}
k_m\in \IR \; &\tfor m=0,\ldots,p,\; \text{ with }\; k_{m_1}^2\ne k_{m_2}^2 
\tfor m_1\ne m_2 
\;\tand \; k_m \neq 0,
\\
\theta_{m, \lambda}\in [0,2\pi) \; &\tfor m=0,\ldots,p, \;
\lambda =1,\ldots,2m+1, 
\; \text{ with }\; \theta_{m, \lambda_1}\ne \theta_{m, \lambda_2}\tfor 
\lambda_1\ne \lambda_2.\!
\end{align*}
Define the directions 
$\bd_{m,\lambda}=(\cos\theta_{m,\lambda},\sin\theta_{m,\lambda})$ and the 
basis functions
$$
\phi_{m,\lambda} (\bx,t)=
\exp\left[i\left(k_m \bd_{m,\lambda}^\top \bx - (k_{m}^2 + V|_{K})t\right)\right]
\ 
\tfor m=0,\ldots,p,\; 
\lambda=1,\ldots,2m+1.
$$
Then \Cref{ConditionApprox} holds true.
\end{proposition}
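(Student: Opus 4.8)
The plan is to replicate the architecture of the proof of \Cref{PROP::POINTWISE-TAYLOR-1D}. Fix $\psi\in\bT(K)\cap\ESOBOLEV{p+1}{K}$ and a point $(\bz,s)$ outside the exceptional zero-measure set $\Upsilon$ of \Cref{rem:PointDerivatives}, and match order-$(p+1)$ Taylor polynomials at $(\bz,s)$, reducing \cref{EQN::Condition1} to an overdetermined linear system $\bM\,\ba(\bz,s)=\bb$. Here $\bb\in\IC^{r_p}$, $r_p=\binom{p+3}3=\DIM{\IP^p(K)}$, collects the Taylor coefficients of $\psi$, the columns of $\bM$ are indexed by the pairs $(m,\lambda)$, and $M_{\bj,(m,\lambda)}=q_\bj(k_m,\theta_{m,\lambda})\,\phi_{m,\lambda}(\bz,s)$ with $q_\bj$ the three-variable analogue of the polynomial $q_{j_x,j_t}$ of the (1+1)-dimensional proof. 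Applying the Schr\"odinger operator to the derivatives of a Trefftz function now yields the three-term relation between Taylor coefficients (the analogue of \cref{EQN::TAYLOR-COEFFICIENTS-RELATION})
\[
i(j_t+1)C_{j_{x_1},j_{x_2},j_t+1}
+(j_{x_1}+1)(j_{x_1}+2)C_{j_{x_1}+2,j_{x_2},j_t}
+(j_{x_2}+1)(j_{x_2}+2)C_{j_{x_1},j_{x_2}+2,j_t}
=V\,C_{j_{x_1},j_{x_2},j_t},
\]
valid for $|\bj|\le p-2$, and these define a space $\calD\subset\IC^{r_p}$ with $\{\bb\}\cup\IMG{\bM}\subset\calD$. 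Each such relation involves a distinct ``new'' coefficient $C_{j_{x_1},j_{x_2},j_t+1}$, so the relations are linearly independent and $\DIM{\calD}=r_p-\binom{p+1}3=(p+1)^2=n_{2,p}$; the free coefficients are those on the time level $j_t=0$ together with the top-degree ones ($|\bj|=p$) with $j_t\ge1$. Thus $\RANK{\bM}\le(p+1)^2$ and it remains to prove equality.

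The crux is therefore to show that $\bM$ has full column rank, equivalently that the polynomials $\{\Taylor{(\bz,s)}{p+1}{\phi_{m,\lambda}}\}$ are linearly independent. Factoring out the unimodular diagonal $\bD_{\bz,s}=\mathrm{diag}(\phi_{m,\lambda}(\bz,s))$, this reduces to independence at the origin: a combination $\sum_{m,\lambda}c_{m,\lambda}\phi_{m,\lambda}$ has vanishing order-$(p+1)$ Taylor polynomial if and only if $\sum_{m,\lambda}c_{m,\lambda}\,P(\xi_{m,\lambda},\eta_{m,\lambda},\zeta_m)=0$ for every polynomial $P$ of degree $\le p$ in three variables, where $(\xi,\eta,\zeta)=(ik_m\cos\theta_{m,\lambda},\,ik_m\sin\theta_{m,\lambda},\,-i(k_m^2+V))$ lie on the paraboloid $\zeta=i(\xi^2+\eta^2)-iV$. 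Restricting $P$ to this paraboloid identifies the test space with the $(p+1)^2$-dimensional space $\mathcal{W}_p:=\spn\{\xi^a\eta^b(\xi^2+\eta^2)^l:a+b+l\le p\}$ of two-variable polynomials (the $-iV$ shift only perturbs lower-order terms, leaving the span unchanged, and the coincidence $\DIM{\mathcal W_p}=\DIM{\calD}$ is exactly the same kernel computation). Hence full rank is equivalent to the \emph{unisolvence} of the $(p+1)^2$ points $\mathbf{y}_{m,\lambda}:=ik_m(\cos\theta_{m,\lambda},\sin\theta_{m,\lambda})$ for $\mathcal{W}_p$: these points lie $2m+1$ to a circle on the $p+1$ concentric circles of distinct squared radii $\kappa_m^2:=-k_m^2\ne0$.

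I expect this unisolvence statement to be the main obstacle, since it is exactly the two-dimensional coupling of angular and radial resolution that is absent in the (1+1)-dimensional case and that forces the choice of $2m+1$ directions for the $m$-th wavenumber. To prove it I would decompose $\mathcal{W}_p$ into angular modes, writing a typical element as $\sum_{|r|\le p}e^{ir\alpha}\kappa^{|r|}h_r(\kappa^2)$ with $\deg h_r\le p-|r|$, so that its value at $\mathbf{y}_{m,\lambda}$ is $\sum_{|r|\le p}\kappa_m^{|r|}h_r(\kappa_m^2)\,e^{ir\theta_{m,\lambda}}$. The argument is then a two-layer Vandermonde induction over the circles $m=p,p-1,\ldots,0$: at step $m$ the modes with $|r|>m$ have, by the previous steps, already been shown to vanish identically and thus drop out, so the $2m+1$ equations on circle $m$ (with distinct angles $\theta_{m,\lambda}$) form an invertible angular Fourier--Vandermonde system, forcing $\kappa_m^{|r|}h_r(\kappa_m^2)=0$, i.e.\ $h_r(\kappa_m^2)=0$, for all $|r|\le m$; combined with the roots $h_r(\kappa_{m'}^2)=0$ accumulated at the earlier steps $m'>|r|$, and since the $\kappa_m^2$ are distinct, each $h_r$ with $|r|=m$ then has $p-|r|+1$ distinct roots while $\deg h_r\le p-|r|$, so $h_r\equiv0$. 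After the final step all $h_r\equiv0$, the evaluation map on $\mathcal{W}_p$ is injective, unisolvence follows, and $\RANK{\bM}=(p+1)^2$.

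Finally, the uniform bound \cref{EQN::Condition2} would follow exactly as in the (1+1)-dimensional case: selecting the $(p+1)^2$ rows that realise the unisolvence yields an invertible square submatrix of the form $\bM_{\msquare}=\bV\bD_{\bz,s}$ with $\bV$ independent of $(\bz,s)$ and $\MatrixNorm{\bD_{\bz,s}^{-1}}{1}=1$, whence $\abs{\ba(\bz,s)}_1\le\MatrixNorm{\bV^{-1}}{1}\abs{\bb_{\msquare}}_1$; integrating over $B$ and using $b_\bj=\frac1{\bj!}\DerA{\bj}{\psi}$ as in \Cref{PROP::POINTWISE-TAYLOR-1D} gives \cref{EQN::Condition2} with a constant $C_\star$ depending only on $d$, $p$ and $\MatrixNorm{\bV^{-1}}{1}$.
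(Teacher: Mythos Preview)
Your proposal is correct and follows essentially the same route as the paper: both reduce \cref{EQN::Condition1} to showing $\RANK{\bM}=(p+1)^2=\DIM{\calD}$, and both establish the rank by a backward induction over $m=p,\ldots,0$, using the $2m+1$ distinct angles at step $m$ to kill a trigonometric Vandermonde system and then counting roots of a one-variable polynomial in $k_m^2$ (the paper's $\zeta_n(k_m)$ is precisely your $\kappa_m^{|r|}h_r(\kappa_m^2)$). Your packaging differs in two cosmetic ways---you parameterise $\calD$ by time-slicing ($j_t=0$ plus top-degree coefficients) rather than the paper's $j_x\in\{0,1\}$ slicing, and you phrase the rank computation geometrically as unisolvence of the nodes $ik_m\bd_{m,\lambda}$ for the restriction space $\mathcal W_p$ on the paraboloid $\zeta=i(\xi^2+\eta^2)-iV$, where the paper instead constructs the square matrix $\bS=\bP\bG$ explicitly---but the underlying computation and the final $C_\star$ bound are the same.
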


\begin{figure}[htb]
\includegraphics[width=\textwidth,clip,trim=115 20 105 10]{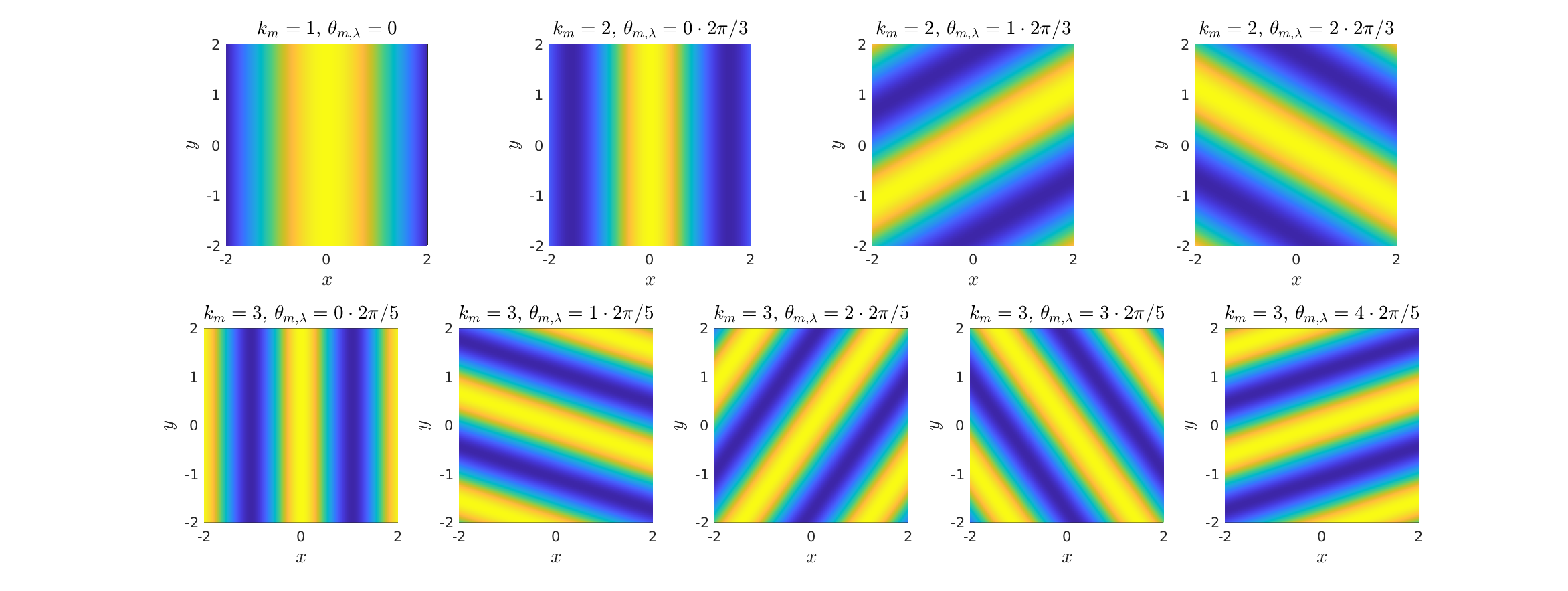}
\caption{The real parts of the Trefftz basis functions defined in 
\eqref{PROP::POINTWISE-TAYLOR-2D} for $V=0$ and $p=2$ at time $t=0$ on the space 
domain $(-2,2)^2$.
Here $k_m=m+1$ for $m=0,1,2$ and $\theta_{m,\lambda}=2\pi\frac{\lambda-1}{2m+1}$ for $\lambda=1,\ldots,2m+1$.
Note that, differently from Figure~\ref{fig:1Dbasis}, here we only plot the space dependence of the $\phi_\ell$.
}
\label{fig:2Dbasis}
\end{figure}

\begin{proof} 
Let $\psi\in \bT(K)\cap H^{p+1}(K)$ and $(\bz,s)\subset B\setminus\Upsilon$, 
with the set $\Upsilon$ as in \cref{rem:PointDerivatives}.
As in \cref{PROP::POINTWISE-TAYLOR-1D}
we can arrange the coefficients of the 
Taylor polynomials of $\psi$ and each basis function $\phi_{\ell}$ in a vector 
$\bb \in \IC^{r_p}$ and a matrix $\bM \in \IC^{r_p \times n_{2,p}}$ with $r_p = 
(p+1)(p+2)(p+3)/6$. 
In order to prove \eqref{EQN::Condition2}, we look for a vector $\ba(\bz, s) 
\in \IC^{n_{2,p}}$ such that $\bM \ba(\bz,s) = \bb$.

Similarly to the $(1 + 1)$ dimensional case, $\bb \in \calD$ and $\IMG{\bM} 
\subset \calD$ for $\calD$
the set of all the vectors $\bC \in \IC^{r_p}$ satisfying the following 
relations
\begin{align}\nonumber
i(j_t + 1) C_{j_x j_y (j_t + 1)} \ & +\  (j_x + 1)(j_x + 2) C_{(j_x + 2)j_y 
j_t} \\
& +  (j_y + 1)(j_y + 2) C_{j_x (j_y + 2) j_t}= VC_{j_x j_y j_t}
\label{EQN::TAYLOR-COEFFICIENTS-RELATIONS-2D}\\
& \hspace{3cm}\tfor |\bj| \leq p - 2, \ \bj = (j_x, j_y, j_t).
\nonumber
\end{align}
\Cref{FIG::Relations2+1D} depicts the equations 
\eqref{EQN::TAYLOR-COEFFICIENTS-RELATIONS-2D} as relations between the 
coefficients of the vector $\bC$, which are represented as points in the 
$(j_x,j_y,j_t)$ 
space.
In particular, it shows that the $(p+1)^2$ entries of any $\bC\in\calD$ with $j_x\in\{0,1\}$ determine all the other entries of $\bC$, thus 
$\RANK\bM\le\DIM\calD\le(p+1)^2$.

\begin{figure}[htb]
\centering
\begin{tikzpicture}
[rotate around x=-90,rotate around y=0,rotate around z=-125,grid/.style={very thin,gray},scale=1]

\def\maxX{5}
\foreach \x in {0,1,...,\maxX}
\foreach \y in {0,1,...,\maxX}
\foreach \z in {0,1,...,\maxX}
    {
    \draw[grid,lightgray] (\x,0,0) -- (\x,\maxX,0);
    \draw[grid,lightgray] (0,\y,0) -- (\maxX,\y,0);
    \draw[grid,lightgray] (0,\y,0) -- (0,\y,\maxX);
    \draw[grid,lightgray] (0,0,\z) -- (0,\maxX,\z);
    \draw[grid,lightgray] (\x,0,0) -- (\x,0,\maxX);
    \draw[grid,lightgray] (0,0,\z) -- (\maxX,0,\z);
    }

\foreach \x in {0,...,\maxX} 
    {
    \pgfmathsetmacro\xx{\maxX-\x}  
    \foreach \y in {0,...,\xx}
        {
        \pgfmathsetmacro\xxy{\maxX-\x-\y}  
        \foreach \z in {0,...,\xxy}
            {  
            \ifnum \x>0 \relax  \ifnum \y>0 \relax  \ifnum \z>0 \relax
                \draw [lightgray,dashed] (\x,\y,0)--(\x,\y,\z);
                \draw [lightgray,dashed] (\x,0,\z)--(\x,\y,\z);
                \draw [lightgray,dashed] (0,\y,\z)--(\x,\y,\z);
            \fi \fi \fi
            \ifnum \x<2\relax
                \draw[blue] plot [mark=*, mark size=2] coordinates{(\x,\y,\z)}; 
            \else
                \draw plot [mark=*, mark size=2] coordinates{(\x,\y,\z)}; 
            \fi
            }
        }
    }
    
\pgfmathsetmacro\maxXX{\maxX-2} 
\foreach \x in {0,...,\maxXX}
    {
    \pgfmathsetmacro\xx{\maxXX-\x}
    \foreach \y in {0,...,\xx}
        {
        \pgfmathsetmacro\xxy{\maxXX-\x-\y}  
        \foreach \z in {0,...,\xxy}
            {        
            \draw [very thick](\x,\y,\z)--(\x+.5,\y+.5,\z+.5)--(\x,\y,\z+1);
            \draw [very thick] (\x+2,\y,\z)--(\x+.5,\y+.5,\z+.5)--(\x,\y+2,\z);
            \draw [fill=white, thick] plot [mark=*, mark size=3] coordinates{(\x+.5,\y+.5,\z+.5)};      
            }
        }
    }
\draw(\maxX+.4,0,0)node{$j_x$};
\draw(0,\maxX+.4,0)node{$j_y$};
\draw(0,.5,\maxX+0.2)node{$j_t$};

\draw [red,very thick](0,1,2)--(.5,1.5,2.5)--(0,1,2+1);
\draw [red,very thick](2,1,2)--(0+.5,1.5,2.5)--(0,1+2,2);
\draw [red,fill=white,thick] plot [mark=*, mark size=3] coordinates{(.5,1.5,2.5)}; 
\end{tikzpicture}
\caption{A representation of the relations defining the set $\calD$ in the 
(2+1)-dimensional case.
The colored dots in position $\bj=(j_x,j_y,j_t)$, $|\bj|\le p$, correspond to the entries $C_{j_x\,j_y\,j_t}$ of the vector $\bC\in\calD\subset\IC^{r_p}$ (here $p=5$ and $r_p=56$).
Each white circle is connected by the segments to four nodes and represents one of the equations in \eqref{EQN::TAYLOR-COEFFICIENTS-RELATIONS-2D}: given $C_{j_x\,j_y\,j_t}$, $C_{j_x\,j_y(j_t+1)}$ and $C_{j_x(j_y+2)j_t}$, it allows to compute $C_{(j_x+2)j_y\,j_t}$ (the leftmost of the four nodes connected to a given white circle).
The red dot exemplifies one of these relations, for $\bj=(0,1,2)$.
Given the $(p+1)^2$ coefficients with $j_x\in\{0,1\}$ (the blue dots), all other 
coefficients are uniquely determined and $\DIM\calD\le (p+1)^2$.
\label{FIG::Relations2+1D}}
\end{figure}
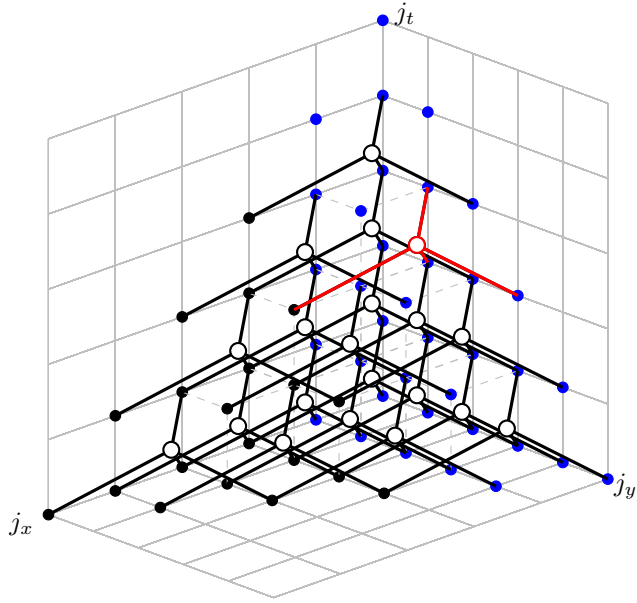

Now it just remains to prove that $\RANK{\bM} \geq (p + 1)^2$.

We fix an arbitrary ordering $\ell=1,\ldots,(p+1)^2$ of the ``triangular'' index 
set 
$\{(m,\lambda): m=0,\ldots,p,\; \lambda=1,\ldots,2m+1\}$, so that we 
can write $\phi_\ell$, $k_\ell$ and $\theta_\ell$ for the basis 
functions $\phi_{m,\lambda}$ and the parameters $k_{m,\lambda}$ and 
$\theta_{m,\lambda}$.

First, from the basis function definition, the matrix $\bM$ can be decomposed as 
$\bM 
= \bG \bD_{\bz, s}$ for 
the diagonal matrix $\bD_{\bz, s} = \mbox{diag}(\phi_{1}(\bz, s), \ldots, 
\phi_{(p+1)^2}(\bz, s))$ and a matrix $\bG \in \IC^{r_p, n_{2,p}}$ with entries 
independent of $(\bz, s)$ given by:
\begin{equation}
\bG_{\bj, \ell} = \frac{1}{j_x!j_y! j_t!} 
\big(ik_\ell \cos\theta_{\ell}\big)^{j_x}
\big(ik_\ell \sin\theta_{\ell}\big)^{j_y}
\big(-i(k_\ell^2 + V)\big)^{j_t},
\quad \bj = (j_x, j_y, j_t).
\end{equation}
We define a square matrix $\bS \in \IC^{n_{2,p} \times n_{2, p}}$ with the 
block structure $\bS =\begin{pmatrix} \bS^+ \\\bS^- \end{pmatrix}$, 
where $\bS^+ \in \IC^{\frac{(p+1)(p+2)}2 \times n_{2, p}}$ and $\bS^- \in 
\IC^{\frac{p(p+1)}2 \times n_{2, p}}$ are defined as
\begin{align*}
S^+_{(j_\bx,j_t), \ell} & = \left(k_\ell^2 + V\right)^{j_t} k_\ell^{j_\bx} 
\ee^{i \theta_{\ell} j_\bx}  & \; \tfor
\begin{tabular}{ll}
$j_\bx = 0 , \ldots, p,$\\
$j_t = 0, \ldots, p - j_{\bx}$,
\end{tabular}\ell = 1, \ldots, (p+1)^2,\\
S^-_{(j_\bx,j_t), \ell} & = \left(k_\ell^2 + V\right)^{j_t} k_{\ell}^{j_\bx}
\ee^{-i \theta_{\ell} j_\bx} & \; \tfor
\begin{tabular}{ll}
$j_\bx = 1, \ldots, p,$\\
$j_t = 0, \ldots, p - j_{\bx}$,
\end{tabular}\ell = 1, \ldots, (p+1)^2.
\end{align*}
The binomial theorem {allows us} to relate the blocks $\bS^\pm$ to $\bG$:
\begin{align*}
S_{(j_\bx,j_t), \ell}^+ & = \left(k_\ell^2 + V\right)^{j_t} (k_\ell 
\cos\theta_\ell + i k_\ell 
\sin\theta_\ell)^{j_\bx} \\
& = \sum_{j_x + j_y = j_{\bx}} 
\frac{j_\bx !}{j_x!j_y!} (k_\ell \cos \theta_\ell)^{j_x}
(ik_\ell\sin\theta_\ell)^{j_y} (k_\ell^2 + V)^{j_t}\\
& = \sum_{j_x + j_y = j_{\bx}} \frac{j_t! \:{j_{\bx}!} }{
i^{j_x}(-i)^{j_t}} \bG_{\bj, \ell}, 
&&\hspace{-5mm}\begin{tabular}{ll}
$j_\bx = 0 , \ldots, p,$\\
$j_t = 0, \ldots, p - j_{\bx}$,\\
$\ell=1,\ldots, (p+1)^2$,
\end{tabular}
\\
S_{(j_\bx,j_t), \ell}^- & = \left(k_\ell^2 + V\right)^{j_t} 
(k_\ell\cos\theta_\ell - i k_\ell \sin\theta_\ell)^{j_\bx} \\
& = \sum_{j_x + j_y = j_{\bx}} \frac{j_\bx !}{j_x!j_y!}(k_\ell \cos\theta_\ell)^{j_x}
(-ik_\ell\sin\theta_\ell)^{j_y} \left(k_\ell^2 + V \right)^{j_t} \\
& = \sum_{j_x + j_y = j_{\bx}} \frac{j_t! \:{j_{\bx}!}}{i^{j_x} (-1)^{
j_y}(-i)^{j_t}} \bG_{\bj, \ell},
&&\hspace{-5mm}\begin{tabular}{ll}
$j_\bx = 1 , \ldots, p,$\\
$j_t = 0, \ldots, p - j_{\bx}$,\\
$\ell=1,\ldots, (p+1)^2$.
\end{tabular}
\end{align*}

This means that there exists a matrix $\bP \in \IC^{n_{2,p} \times r_p}$ such 
that 
$\bS = \bP \bG$.

Next, we prove that $\bS$ is not singular. 
If $\bS^\top \bc =\uu{0}$ for some vector $\bc \in \IC^{n_{2,p}}$, then for 
each pair $\left( k_m,\ \theta_{m, \lambda}\right)$ in the definition of 
the basis functions we have 
\begin{align}
\label{EQN::TRIGONOMETRIC-POLYNOMIAL}
0 = \sum_{n = -p}^p \bigg(\sum_{j_t = 0}^{p - \abs{n}} c_{n, j_t} 
\left(k_{m}^{2} + V\right)^{{j_t}}k_m^{\abs{n}}\bigg) 
\ee^{in\theta_{m,\lambda}}
= \sum_{n = -p}^p \zeta_{n}(k_m)\ \ee^{i n\theta_{m,\lambda}},
\\
m=0,\ldots,p,\quad\lambda=1,\ldots,2m+1,
\nonumber
\end{align}
If we fix $m = p$ in \Cref{EQN::TRIGONOMETRIC-POLYNOMIAL}, the $\zeta_{n}(k_p)$ 
are the 
coefficients of a trigonometric polynomial of degree $p$ with $2 p + 1$ 
different zeros $\theta_{p, \lambda}$, which implies that $\zeta_{n}(k_p) = 
0,$ 
for $n = -p, \ldots, p$.
In particular $ 0 = \zeta_{\pm p}(k_p) = c_{\pm p, 0} k_p^p 
\Rightarrow c_{\pm p, 0} = 0,$ since $k_p \neq 0$ by hypothesis. 

We now proceed by (backward) induction.
Assume that for some $\eta \in \left\{1, \ldots, p\right\}$ we have:
\begin{subequations}
\begin{align}
\label{EQN::COEFFICIENTS-TRIG-POLY-1}
c_{\pm n, j_t} & = 0,\ \mbox{ for } n = p - \eta + 1, \ldots, p \
\tand \ j_t=0,\ldots,p-|n|,\\
\label{EQN::COEFFICIENTS-TRIG-POLY-2}
\zeta_n(k_m) & = 0,\ \mbox{ for } n = -p, \ldots, p\ \mbox{ and }\ 
m = p - \eta + 1, \ldots, p.
\end{align}
\end{subequations}
Then \eqref{EQN::TRIGONOMETRIC-POLYNOMIAL} gives
\begin{equation*}
\sum_{n = - (p - \eta)}^{p - \eta} \zeta_n(k_m) 
\ee^{i n\theta_{m,\lambda}} = 0, 
\end{equation*}
and for $m = (p - \eta)$, the $\zeta_n(k_{p-\eta})$ are the coefficients of a 
trigonometric polynomial of degree $p - \eta$ with $2(p - \eta) + 1$ different 
zeros $\theta_{p - \eta, \lambda}$.
Therefore assumption \cref{EQN::COEFFICIENTS-TRIG-POLY-2} also holds for $m = p 
- \eta$.
For $n = \pm (p - \eta)$ in \cref{EQN::COEFFICIENTS-TRIG-POLY-2} we have
\begin{equation*}
	k_{m}^{p - \eta} \sum_{j_t = 0}^{\eta} c_{\pm (p - \eta), j_t} 
\left(k_{m}^2 + V\right)^{j_t} = 0;
\end{equation*}
therefore $c_{\pm (p - \eta), j_t}$ are the coefficients of a complex-valued 
polynomial of degree $\eta$ with $\eta + 1$ different zeros $\left(k_{m}^2 + 
V\right)$,\ $m = p - \eta, \ldots, p$; which implies that assumption 
\cref{EQN::COEFFICIENTS-TRIG-POLY-1} holds for $n = p - \eta$. 
Recursively, this leads to $\bc = \uu{0}$; therefore the matrix $\bS$ is invertible
and, since $\bS=\bP\bG$ and $\bM=\bG\bD_{\bz,s}$ for a nonsingular 
$\bD_{\bz,s}$, the matrix $\bM$ has rank at least $n_{2,p}$ so it is full 
rank.

The solution $\ba(\bz, s)$ of the rectangular linear system $\bM \ba(\bz, s) = 
\bb$ is 
$\ba(\bz, s) = \bD_{\bz, s}^{-1} \bS^{-1} \bP \bb$, satisfies 
\eqref{EQN::Condition1}, and the following 
bound is obtained
\begin{equation*}
\abs{\ba(z,s)}_1 \leq 
\MatrixNorm{\bS^{-1}}{1}\MatrixNorm{\bD_{\bz, s}^{-1}}{1} \abs{\bP \bb}_{1},
\end{equation*}
where $\MatrixNorm{\bD^{-1}_{z,s}}{1} = 1$\ for all $(\bz,s)\in B$.
Writing $b_\bj=\frac1{\bj!}D^\bj\psi(\bz,s)$ and integrating over $B$ as in the 
proof of \cref{PROP::POINTWISE-TAYLOR-1D}, we obtain \cref{EQN::Condition2} with 
$C_\star=\MatrixNorm{\bS^{-1}}1\MatrixNorm{\bP}1 \sqrt{r_p}$.
\end{proof}

{Analogously to the $(1 + 1)$-dimensional case, the constant $C_\star$ depends on the norm of the inverse of the matrix $\bS$, which depends on the choice of both the parameters $\{k_m\}$ and the directions $\left\{\bd_{m,\lambda}\right\}$.
This indicates that finding an appropriate choice of these parameters is crucial in order to get accurate and stable approximations.}

\begin{remark}\label{Rem:3D}
{In order to extend \Cref{ConditionApprox}, and thus the Trefftz approximation theory, to $(d+1)$-dimensional problems with $d\ge3$, one has to provide conditions on the parameters defining the local basis functions.
Not only one has to determine the minimal number of directions $\{\bd_{m,\lambda}\}\subset\mathcal{S}^d_1$ associated to each parameter $k_m$, but also the mutual relations that the directions have to satisfy (e.g.\ not too many of them can belong to the same $(d-1)$-dimensional hyperplane).
Work in this direction is currently in progress.
}
\end{remark}


\subsection{Error bounds in DG norm} \label{SECT::DG-ERROR-ESTIMATE}

The next proposition provides a bound on the $\Tnorm{\cdot}{\DGp}$ norm in 
terms 
of volume Sobolev seminorms and norms and is a direct consequence of the trace 
inequalities \cref{EQN::TRACE-INEQUALITY-TIME}.

\begin{proposition}\label{PROP::DGp-BOUND}
For all $\varphi\in \prod_{K\in\calT_h(Q)} H^1(I_n;L^2(\Kx))\cap 
L^2(I_n;H^2(\Kx))$,
the following bound holds
\begin{align*}
\Tnorm{\varphi}{\DGp}^2 \leq  3\,C_{\tr}\sum_{K \in \Th(Q)} \Big[&
h_n^{-1} \Norm{\varphi}{L^2(K)}^2 + h_n 
\Norm{\partial_t \varphi}{L^2(K)}^2\\
& +\mathrm{a}_K^2 h_{\Kx}^{-1}\Norm{\varphi}{L^2(K)}^2 + \mathrm{a}^2_K h_{\Kx} 
\Norm{\nabla \varphi}{L^2(K)^d}^2 \\
& + \mathrm{b}_K^2 h_{\Kx}^{-1}\Norm{ \nabla \varphi}{L^2(K)^d}^2 + 
\mathrm{b}_K^2 h_{\Kx} \Norm{D^2 \varphi}{L^2(K)^{d\times d}}^2 \Big],
\end{align*}
where $D^2 \left(\varphi \right)$ is the space--time Hessian of $\varphi$ and 
\[
\begin{tabular}{ll}
$\alpha_{\inf}^K := \underset{\partial K \cap \left(\Ftime \cup 
\FD\right)}{\essinf} \alpha,$
& $\beta_{\inf}^K := \underset{\partial K \cap \Ftime}{\essinf}\beta$, \\
$\alpha_{\sup}^K := \underset{\partial K \cap \left(\Ftime \cup 
\FD\right)}{\esssup} \alpha,$
& $\beta_{\sup}^K := \underset{\partial K \cap \Ftime}{\esssup}\beta$, \\
$\mathrm{a}_K := \max\left\{\left(\alpha_{\sup}^K \right)^{1/2},\ 
\left(\beta_{\inf}^K \right)^{-1/2}\right\},$
& $\mathrm{b}_K := \max\left\{\left(\alpha_{\inf}^K \right)^{-1/2},\ 
\left(\beta_{\sup}^K \right)^{1/2}\right\}.$
\end{tabular}
\]
\end{proposition}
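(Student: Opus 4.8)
The plan is to bound each of the skeleton (trace) terms that make up $\Tnorm{\varphi}{\DGp}^2$ by volume integrals over the adjacent elements, using only the two trace inequalities in \cref{EQN::TRACE-INEQUALITY-TIME}. The assumed regularity $\varphi\in\prod_{K}H^1(I_n;L^2(\Kx))\cap L^2(I_n;H^2(\Kx))$ is exactly what guarantees that $\varphi$ and $\nabla\varphi$ admit the $L^2$ traces appearing in the norm (the time-trace inequality needs $\varphi\in H^1(I_n;L^2(\Kx))$, and the trace of $\nabla\varphi$ needs $\varphi\in L^2(I_n;H^2(\Kx))$) and that the volume quantities $\partial_t\varphi$, $\nabla\varphi$ and the space--time Hessian $D^2\varphi$ on the right-hand side are finite. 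First I would expand $\Tnorm{\varphi}{\DGp}^2$ into its nine constituent squared norms (the five defining $\Tnorm{\cdot}{\DG}^2$ together with the four additional terms) and sort them into three groups of three, according to the portion of the skeleton on which they live and whether they involve $\varphi$ or $\nabla\varphi$.

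The first group collects the terms supported on the space-like part of the skeleton, namely $\Norm{\jump{\varphi}_t}{L^2(\Fspa)}^2$, $\tfrac12\Norm{\varphi}{L^2(\FT\cup\FO)}^2$ and $\Norm{\varphi^-}{L^2(\Fspa)}^2$; to each I would apply the time-trace inequality (first line of \cref{EQN::TRACE-INEQUALITY-TIME}), which produces precisely the pair $h_n^{-1}\Norm{\varphi}{L^2(K)}^2$ and $h_n\Norm{\partial_t\varphi}{L^2(K)}^2$. The second and third groups live on the time-like faces and on $\FD$ and use the spatial trace inequality (second line). The terms weighted by $\alpha^{1/2}$ or $\beta^{-1/2}$, i.e.\ $\Norm{\alpha^{1/2}\varphi}{L^2(\FD)}^2$, $\Norm{\alpha^{1/2}\jump{\varphi}_\bN}{L^2(\Ftime)^d}^2$ and $\Norm{\beta^{-1/2}\mvl{\varphi}}{L^2(\Ftime)}^2$, involve only the trace of $\varphi$, so the spatial trace inequality applied to $\varphi$ yields $h_{\Kx}^{-1}\Norm{\varphi}{L^2(K)}^2$ and $h_{\Kx}\Norm{\nabla\varphi}{L^2(K)^d}^2$; after estimating the weights elementwise by $\alpha^K_{\sup}$ and $(\beta^K_{\inf})^{-1}$ these are absorbed into $\mathrm{a}_K^2=\max\{\alpha^K_{\sup},(\beta^K_{\inf})^{-1}\}$. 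Symmetrically, the terms weighted by $\alpha^{-1/2}$ or $\beta^{1/2}$, i.e.\ $\Norm{\alpha^{-1/2}\nabla\varphi\cdot\vnSpace{\Omega}}{L^2(\FD)}^2$, $\Norm{\alpha^{-1/2}\mvl{\nabla\varphi}}{L^2(\Ftime)^d}^2$ and $\Norm{\beta^{1/2}\jump{\nabla\varphi}_\bN}{L^2(\Ftime)}^2$, involve the trace of $\nabla\varphi$, so applying the spatial trace inequality componentwise to $\nabla\varphi$ gives $h_{\Kx}^{-1}\Norm{\nabla\varphi}{L^2(K)^d}^2$ and $h_{\Kx}\Norm{D^2\varphi}{L^2(K)^{d\times d}}^2$, with weights collected into $\mathrm{b}_K^2=\max\{(\alpha^K_{\inf})^{-1},\beta^K_{\sup}\}$.

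The one point requiring care is the treatment of the jumps and averages on interior faces. On a shared face the two spatial normals are opposite unit vectors, so by the triangle inequality $|\jump{\varphi}_\bN|^2,|\jump{\varphi}_t|^2\le 2(|\varphi_1|^2+|\varphi_2|^2)$ and $|\mvl{\varphi}|^2\le\tfrac12(|\varphi_1|^2+|\varphi_2|^2)$, and likewise for the gradient jumps and averages. Re-summing these element contributions over the mesh (each interior face is shared by exactly two elements, and a partial boundary trace is dominated by the full trace $\Norm{\cdot}{L^2(\partial\Kx\times I_n)}$, resp.\ $\Norm{\cdot}{L^2(\Kx\times\{t_{n-1},t_n\})}$, to which the trace inequality applies), each element face carries a coefficient bounded by $3$: the extreme case is a top space-like face, which receives a factor $2$ from the time-jump $\jump{\varphi}_t$ and an additional factor $1$ from the extra term $\Norm{\varphi^-}{L^2(\Fspa)}^2$ of the $\Tnorm{\cdot}{\DGp}$ norm, whereas the two spatial groups yield per-element coefficients of at most $2+\tfrac12=\tfrac52$. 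Collecting the constants thus gives the stated factor $3\,C_{\tr}$.

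I expect the main (and essentially only) obstacle to be this bookkeeping: correctly distributing the interior-face jump/average terms to the two adjacent elements while keeping the weights uniformly controlled by the per-element extrema $\alpha^K_{\sup},(\alpha^K_{\inf})^{-1},\beta^K_{\sup},(\beta^K_{\inf})^{-1}$, which are finite and positive thanks to the standing assumptions $\essinf_{\Ftime\cup\FD}\alpha>0$ and $\essinf_{\Ftime}\beta>0$. No property of the Schr\"odinger equation and no inverse estimate enters; the only analytic inputs are the trace inequalities \cref{EQN::TRACE-INEQUALITY-TIME}, and everything else is elementary summation together with the elementwise bounds on the stabilization weights.
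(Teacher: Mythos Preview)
Your proposal is correct and is exactly the argument the paper has in mind: the paper states only that the proposition ``is a direct consequence of the trace inequalities \cref{EQN::TRACE-INEQUALITY-TIME}'' and then explains, in the paragraph immediately following the statement, that the factor $3$ arises from the combinations $\jmp{w}_t^2+(w^-)^2$ on $\Fspa$, $\alpha\abs{\jmp{w}_\bN}^2+\beta^{-1}\mvl{w}^2$ and $\beta\jmp{\nabla w}_\bN^2+\alpha^{-1}\abs{\mvl{\nabla w}}^2$ on $\Ftime$---precisely your three groups and your bookkeeping for the constant.
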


The factor 3 appearing in the bound of \Cref{PROP::DGp-BOUND} is due to the 
integral terms with arguments $\jmp{w}_t^2+(w^-)^2$ on $\Fspa$, 
$\alpha\abs{\jmp{w}_\bN}{}^2 + \beta^{-1}\mvl{w}^2$ and $\beta\jmp{\nabla 
w}_\bN^2+\alpha^{-1} \abs{\mvl{\nabla w}}^2$ on $\Ftime$ in the definition 
\eqref{EQN::DG-NORMS} of the $\Tnorm{\cdot}\DG$ norm.

\Cref{THM::ERROR-ESTIMATE} provides the error estimate for the Trefftz-DG 
approximation of \eqref{EQN::SCHRODINGER-EQUATION} in the 
$\Tnorm{\cdot}\DG$ norm assuming that \Cref{ConditionApprox} holds true.
It is consequence of 
\Cref{PROP::DGp-BOUND,PROP::H-ESTIMATE}, 
\Cref{THEOREM::WELL-POSEDNESS} and the 
``local quasi-uniformity in space'' assumption on the mesh.

\begin{theorem}
\label{THM::ERROR-ESTIMATE}
Let $p\in\IN$. Let $\psi \in \bT(\Th) 
\cap H^{p + 1}(\Th)$ be the exact solution of 
\eqref{EQN::SCHRODINGER-EQUATION} and $\uhp\in \bVp(\Th)$ be the Trefftz-DG 
approximation solving \cref{EQN::VARIATIONAL-TREFFTZ-DG} with $\bVp(\Th)$ 
satisfying \Cref{ConditionApprox} for all $K \in \Th(Q)$.
Set the stabilization parameters as
\begin{align*}
&\alpha\big|_{F} = \frac1{h_{F_\bx}}
\quad \forall F \subset \Ftime\cup\FD, 
\qquad \beta\big|_{F} = h_{F_\bx} \quad \forall F \subset \Ftime,\; 
\text{where}
\\
&\tfor F\subset \deK\cap \FD:\quad h_{F_\bx}=h_{\Kx}
\\
&\tfor F=\Kx^1\times(t_{n-1},t_n)\;\cap\;\Kx^2\times(t_{n-1},t_n)\subset \Ftime:\\
&\hspace{10mm}\text{$h_{F_\bx}$ is any constant satisfying:}
\quad \min\{h_{\Kx^1},h_{\Kx^2}\}\le h_{F_\bx}\le 
\max\{h_{\Kx^1},h_{\Kx^2}\}.
\end{align*}
Then there exists a constant $C$ independent on the mesh size such that
$$
\Tnorm{\psi - \uhp}{\DG} \leq C \sum_{K=\Kx\times(t_{n-1},t_n)\in\calT_h(Q)} 
\max\{h_{\Kx},h_n\}^p \N{\psi}_{H^{p+1}(K)}.
$$
\end{theorem}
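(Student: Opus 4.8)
The plan is to chain together the abstract quasi-optimality of \Cref{THEOREM::WELL-POSEDNESS}, the local approximation estimate of \Cref{PROP::H-ESTIMATE}, and the skeleton-to-volume trace bound of \Cref{PROP::DGp-BOUND}. First I would apply the quasi-optimality bound \cref{EQN::QUASI-OPTIMALITY} to replace the Galerkin error by a best-approximation error in the stronger norm. Since $\bVp(\Th)$ is a product of the local spaces $\bVp(K)$, the competitor in the infimum may be chosen independently on each element, so it suffices to exhibit one good global candidate: I would take $\Phi\in\bVp(\Th)$ with $\Phi|_K=\Phi_K$, where $\Phi_K\in\bVp(K)$ is the local approximant provided by \Cref{PROP::H-ESTIMATE} (applicable because \Cref{ConditionApprox} is assumed on every $K$). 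This yields
\[
\Tnorm{\psi-\uhp}{\DG}\ \le\ 3\,\Tnorm{\psi-\Phi}{\DGp},
\]
reducing everything to an estimate of $\Tnorm{\varphi}{\DGp}$ with $\varphi:=\psi-\Phi$.

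Next I would insert $\varphi$ into \Cref{PROP::DGp-BOUND} to turn the skeleton seminorm into a weighted sum of elementwise volume norms. The role of the prescribed stabilization $\alpha|_F=1/h_{F_\bx}$, $\beta|_F=h_{F_\bx}$ is precisely to make the weights $\mathrm{a}_K,\mathrm{b}_K$ scale correctly: by local quasi-uniformity in space one has $h_{F_\bx}\simeq h_{\Kx}$ on every face of $K$, hence $\alpha_{\sup}^K\simeq\alpha_{\inf}^K\simeq h_{\Kx}^{-1}$ and $\beta_{\sup}^K\simeq\beta_{\inf}^K\simeq h_{\Kx}$, so that $\mathrm{a}_K^2\simeq h_{\Kx}^{-1}$ and $\mathrm{b}_K^2\simeq h_{\Kx}$. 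With these weights the six-term bracket of \Cref{PROP::DGp-BOUND} reduces, on each $K$, to controlling $h_n^{-1}\Norm{\varphi}{L^2(K)}^2$, $h_n\Norm{\partial_t\varphi}{L^2(K)}^2$, $h_{\Kx}^{-2}\Norm{\varphi}{L^2(K)}^2$, two copies of $\Norm{\nabla\varphi}{L^2(K)^d}^2$, and $h_{\Kx}^2\Norm{D^2\varphi}{L^2(K)^{d\times d}}^2$.

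Then I would estimate each factor by \Cref{PROP::H-ESTIMATE}, which gives $\Norm{D^\bj\varphi}{L^2(K)}\le C h_K^{p+1-|\bj|}\Norm{\psi}{\ESOBOLEV{p+1}{K}}$ for $0\le|\bj|\le p$, with $h_K=\diam(K)\simeq\max\{h_{\Kx},h_n\}$. Using $|\bj|=0$ for the two $L^2$ terms, $|\bj|=1$ for the three gradient terms, and $h_{\Kx},h_n\le h_K$, each of these five contributions is bounded by $C\max\{h_{\Kx},h_n\}^{2p}\Norm{\psi}{\ESOBOLEV{p+1}{K}}^2$. The term with second derivatives needs more care: for $p\ge2$ the case $|\bj|=2\le p$ gives $h_{\Kx}^2\Norm{D^2\varphi}{L^2(K)^{d\times d}}^2\le C h_{\Kx}^2 h_K^{2p-2}\Norm{\psi}{\ESOBOLEV{p+1}{K}}^2\le C\max\{h_{\Kx},h_n\}^{2p}\Norm{\psi}{\ESOBOLEV{p+1}{K}}^2$, whereas for $p=1$ one invokes the top-order variant $|\bj|=p+1$ of the argument of \Cref{PROP::H-ESTIMATE}, in which the Taylor remainder no longer decays but stays bounded, so $\Norm{D^2\varphi}{L^2(K)^{d\times d}}\le C\Norm{\psi}{\ESOBOLEV{2}{K}}$. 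Summing over $K\in\Th(Q)$, invoking \Cref{THEOREM::WELL-POSEDNESS} and \Cref{PROP::DGp-BOUND}, taking square roots and finally the elementary inequality $\big(\sum_K a_K^2\big)^{1/2}\le\sum_K a_K$ produces the asserted $\ell^1$-type bound.

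The one genuinely delicate point is balancing the negative powers of $h_n$ and $h_{\Kx}$ produced by the trace inequalities against the single isotropic approximation rate $h_K^{p+1}$. Because \Cref{PROP::H-ESTIMATE} only sees the diameter $h_K$, the term $h_n^{-1}\Norm{\varphi}{L^2(K)}^2$ carries a factor $h_K^2/h_n$ and $h_{\Kx}^{-2}\Norm{\varphi}{L^2(K)}^2$ a factor $(h_K/h_{\Kx})^2$; both are $\ORDER{1}$ exactly when the temporal and spatial element sizes are comparable, $h_n\simeq h_{\Kx}$, in which regime the chain closes cleanly (for strongly anisotropic space--time elements one would instead need an anisotropic refinement of \Cref{PROP::H-ESTIMATE}). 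I expect this weight-balancing, together with the bookkeeping of the top-order term for small $p$, to be the only real obstacle; the remainder is a mechanical composition of the three cited results.
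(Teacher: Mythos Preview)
Your approach is exactly the one the paper takes: it states the theorem as a direct consequence of \Cref{THEOREM::WELL-POSEDNESS}, \Cref{PROP::H-ESTIMATE}, \Cref{PROP::DGp-BOUND} and the local quasi-uniformity in space, without spelling out further details. Your observation about the space--time aspect ratio is well taken---the isotropic bound of \Cref{PROP::H-ESTIMATE} combined with the anisotropic weights $h_n^{-1},h_{\Kx}^{-2}$ from \Cref{PROP::DGp-BOUND} indeed leaves factors $h_K^2/h_n$ and $(h_K/h_{\Kx})^2$ that the paper's brief sketch does not address, so the constant $C$ implicitly depends on $\max\{h_{\Kx}/h_n,\,h_n/h_{\Kx}\}$; your handling of the $|\bj|=p+1$ case via the boundedness of the top-order remainder is likewise a detail the paper omits.
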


\begin{remark}
\label{REMARK::ENERGY-ESTIMATE}
The last formula in \cref{Rem:Energy} shows that, if $\gD=0$ and 
the assumptions 
of \cref{THM::ERROR-ESTIMATE} are satisfied, the energy dissipated by the 
Trefftz DG method (i.e.\ $\calE(0;\psi_0)-\calE( T,\psi\hp)$) 
converges 
 to zero proportionally to the square of the error in the DG norm, i.e.\ as
$\max_{K\in\Th(Q)}\max\{h_{K_\bx},h_n\}^{2p}$.
\end{remark}

\begin{remark}\label{Rem:LessDOFs}
The previous sections show that for $d=1,2$ the local space $\IT^p(K)$ has dimension $\calO_{p\to\infty}(p^d)$ and approximates Schr\"odinger solutions with the same rates of the space $\IP^p(K)$ of the degree-$p$ polynomials on $K$, which has larger dimension $\dim\IP^p(K)=\calO_{p\to\infty}(p^{d+1})$.
More precisely, $\IT^p(K)$ has the dimension of the space of the harmonic polynomials of degree $p$ on $K$.
We expect the same to hold for $d>2$.
This is a major advantage of the Trefftz approach: it achieves the same convergence rates of standard methods with considerably fewer degrees of freedom.
The same situation is well-known for other PDEs, see e.g.\
\cite{Moiola_Hiptmair_Perugia_2011},
\cite[Fig.~4]{Perugia_Schoeberl_Stocker_Wintersteiger_2020},
\cite[Rem.~4.11]{ImbertGerard_Moiola_Stocker}.
\end{remark}


\section{Numerical experiments\label{SECT::NUMERICAL-EXPERIMENTS}}

We present some numerical experiments validating the error 
estimates in the mesh-dependent norm $\Tnorm{\cdot}\DG$ derived 
in \Cref{SECT::DG-ERROR-ESTIMATE}. We also numerically assess the error 
behavior with respect to $h := \max \left\{h_\bx, h_t\right\}$ 
in the final-time, mesh-independent $\Norm{\cdot}{L_2(\FT)}$ norm and evaluate 
the energy dissipation of the proposed method.
All experiments have been implemented in Matlab.

A direct implementation of the variational problem 
\eqref{EQN::VARIATIONAL-TREFFTZ-DG} leads to 
a large global linear system involving all the degrees of freedom of the 
expansion coefficients of $\uhp$ in the basis of $\bVp(\Th)$, over the full 
space--time cylinder $Q$.
Due to the choice of the upwind-in-time numerical flux 
$\widehat\psi_{hp}=\psi^-_{hp}$ on $\Fspa$, this system can be decomposed as a 
sequence of $N$ smaller linear systems: each of them arises 
from solving sequentially for $\psi_{hp}$ in each time-slab $\Omega \times 
[t_{n - 1}, t_n]$, and using
the trace of the solution from the previous slab as initial datum. 

Furthermore, by choosing the same space mesh $\Thxn$ and time step 
$\tau = t_n - t_{n - 1}$ for all $n = 1, \ldots, N$, we can apply a time 
translation for each time-slab $\Omega \times [t_{n - 1}, t_n]$ in the 
definition of the basis functions \cref{EQN::BASIS-FUNCTIONS}, as 
$\phi_{\ell}(\bx, t) := 
\exp[i(k_\ell \bd_\ell^\top \bx - (k_{\ell}^2 + V\big|_{K})(t - t_{n -1}))]
$. This makes the matrices of 
the linear systems for all the time-slabs to be the same, which represents 
a substantial reduction in the computational cost of the method.
To solve these systems we 
perform the LU factorization of such matrix once
using the Matlab's function \texttt{lu} with scaling and row--column 
permutations, which produces sparser and stable factorizations; then we 
solve for each time slab applying forward and backward substitutions.

As it is usual for plane-wave 
approximations \cite[\S4.3]{Hiptmair_Moiola_Perugia_2016}, 
the time-stepping matrix is ill-conditioned. We observe $\calO(h^{-(2p + 
1)})$ growth of the 2-condition number $\kappa_2$ under uniform space--time mesh 
refinement for both the ($1+1$)- and the ($2+1$)-dimensional 
cases. A theoretical study of this matter as well as suitable preconditioning 
(or condition-number reduction)
techniques for this method is highly relevant and will be the subject of 
future work.

The stabilization parameters $\alpha$ and $\beta$ are taken as in 
\Cref{THM::ERROR-ESTIMATE} 
with $h_{F_{\bx}} = \min\{h_{K_x^1}$, $h_{K^2_\bx}\}$ for the  faces in $\Ftime$.

The integrals in the assembly of the Galerkin matrix and load vectors are 
computed with Gauss--Legendre quadratures (combined with the Duffy transform 
for the integrals over triangles).
On polytopic meshes, thanks to the choice of exponential basis functions 
\eqref{EQN::BASIS-FUNCTIONS}, closed formulas for all the integrals appearing in 
the matrix assembly could be written, following the ideas in 
\cite[\S4.1]{Hiptmair_Moiola_Perugia_2016}{: the implementation of these formulas is non-trivial, particularly in higher dimensions, but it could considerably speed up the computations}.
We recall that the Trefftz-DG formulation \eqref{EQN::VARIATIONAL-TREFFTZ-DG} 
does not involve ($d+1$)-dimensional integrals over mesh elements but only on 
the $d$-dimensional element faces.

\subsection{Square potential well in \texorpdfstring{$1+1$}{1+1} dimensions}
\label{s:NumExp1D}
Let us consider the (1+1)-dimensional Schr\"odinger equation 
\cref{EQN::SCHRODINGER-EQUATION} on $Q = (-2, 2) \times (0, 1)$ with 
homogeneous Dirichlet boundary conditions and the following square-well potential:
\begin{equation}
\label{EQN::SQUARE-WELL}
V(x) = \left\{  
\begin{tabular}{ll}
	$0$, & $x\in (-1, 1),$\\
	$V_*$, & $x \in (-2, 2)\ \setminus\ (-1, 1),$
\end{tabular}
\right.	
\end{equation}
for some $V_* > 0$.
The initial condition is taken as an eigenfunction (bound state) of 
$-\partial_x^2+V$ on $(-2,2)$:
\begin{equation*}
\psi_0(x) = \left\{
\begin{tabular}{ll}
$\cos(k_*x)$, & $x \in (-1, 1),$ \\
$\frac{\cos(k_*)}{\sinh(\sqrt{V_* - k_*^2})} \sinh(\sqrt{V_* - k_*^2}(2 - |x|))$, & $x \in (-2, 2) \ \setminus\ (-1, 1)$,
\end{tabular}
\right.
\end{equation*}
where $k_*$ is a real root of the function $f( k) := \sqrt{V_* - k^2} - 
k\tan( k) \tanh(\sqrt{V_* - k^2} )$.
The solution of the corresponding initial boundary value problem 
\eqref{EQN::SCHRODINGER-EQUATION} is $\psi(x, t) = \psi_0(x)\exp(-ik^2 t)$.
For each $V_*$ there is a finite number of such values $k_*$: in the
numerical experiments below we take the largest one, corresponding 
to faster oscillations in space and time.
In \Cref{FIG::PLOT-ZERO-FUNC} we present the plot the function 
$f(x)$ for 
$V_* = 20$ and $V_* = 50$ with the values of $k_*$ used in this experiment.

\begin{figure}[htb]
\centering
\subfloat[$V_* = 20$]{\label{FIG::F-V20}
\includegraphics[width=.49\textwidth, clip]{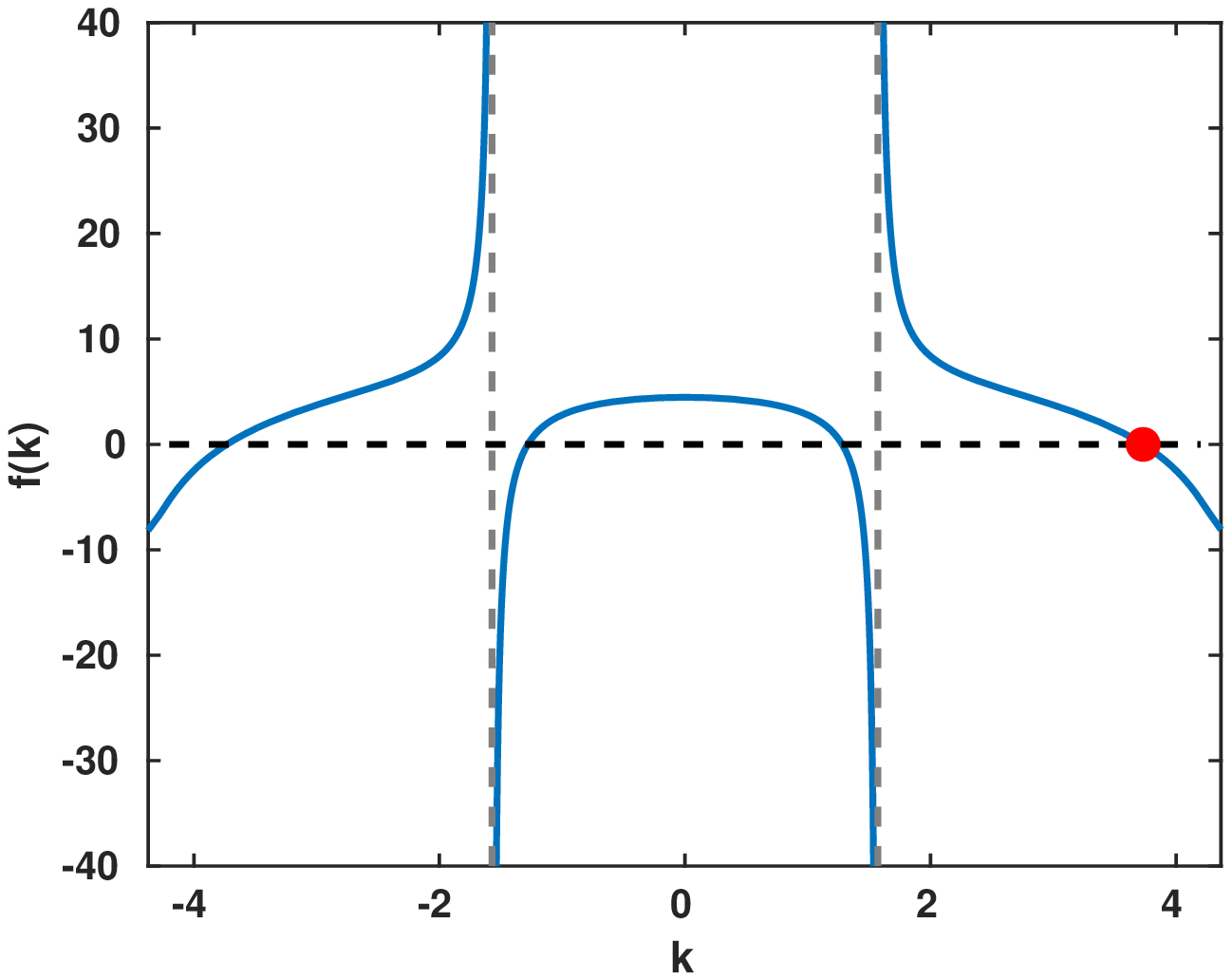}}
\subfloat[$V_* = 50$]{\label{FIG::F-V50}
\includegraphics[width=.49\textwidth, clip, trim=10 0 40 20]{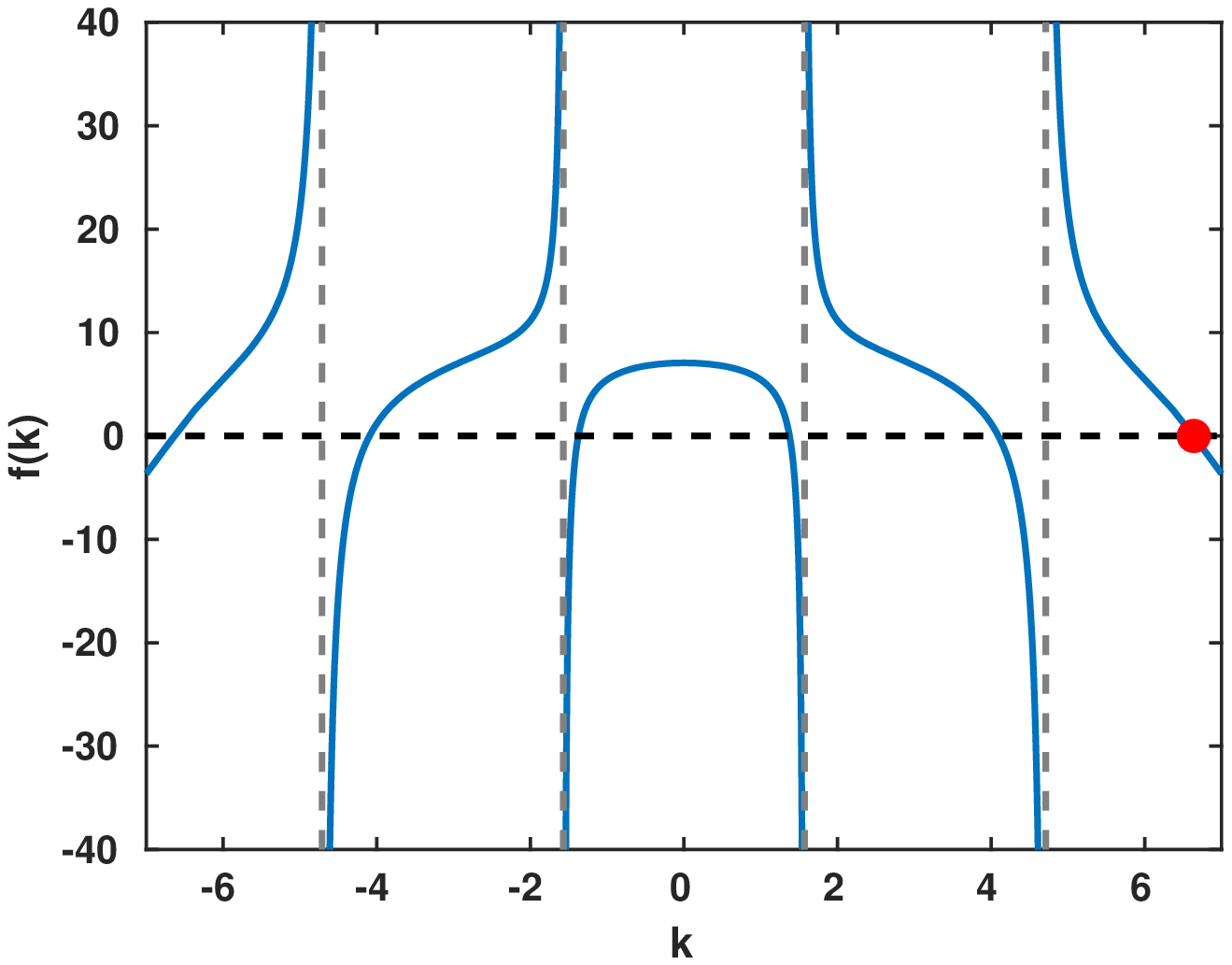}}
\caption{Plot of $f(k)$ for different values of $V_*$.
The red dots are the values of $k$ taken in the numerical experiments: $k_ * 
\approx 3.7319\ (V_* = 20)$ and $k_ * \approx 6.6394\ (V_* = 50)$ \label{FIG::PLOT-ZERO-FUNC}.}
\end{figure}

From the definition \eqref{EQN::BASIS-1+1} of the Trefftz basis, we observe 
that if the parameters $k_{\ell}$ are chosen very close to one another then the 
corresponding basis functions approach mutual linear dependence.
The consequence is that the Trefftz-DG method becomes more and more ill-conditioned.
This is confirmed by the Vandermonde-like matrix $\bV$ in \cref{EQN::VANDERMONDE-MATRIX}, which is singular in the limit $|k_{\ell_1}-k_{\ell_2}|\to0$.
In the experiments we take $2p+1$ equally spaced values 
$k_\ell\in\{-p, -(p - 1), \ldots, 0, \ldots, p - 1, p\}$.

In \Cref{FIG::PLOT-V20,FIG::PLOT-V50} we plot the Trefftz-DG numerical 
approximations $\psi_{hp}$ obtained for $p = 3$ on the finest mesh 
(described below) with $V_* = 20$ and $V_* = 50$ respectively.
It can be observed that for increasing $V_*$ the solution oscillates more 
with respect to $x$ in $(-1, 1)$, while it decays monotonically to 0 in $(-2, 
2) \setminus (-1, 1)$.

\begin{figure}[htb]
\subfloat[$\REL{\uhp}$ for $V_* = 20$]{\label{FIG::PLOT-V20}
\includegraphics[width=.49\textwidth, clip, trim=0 0 20 10]{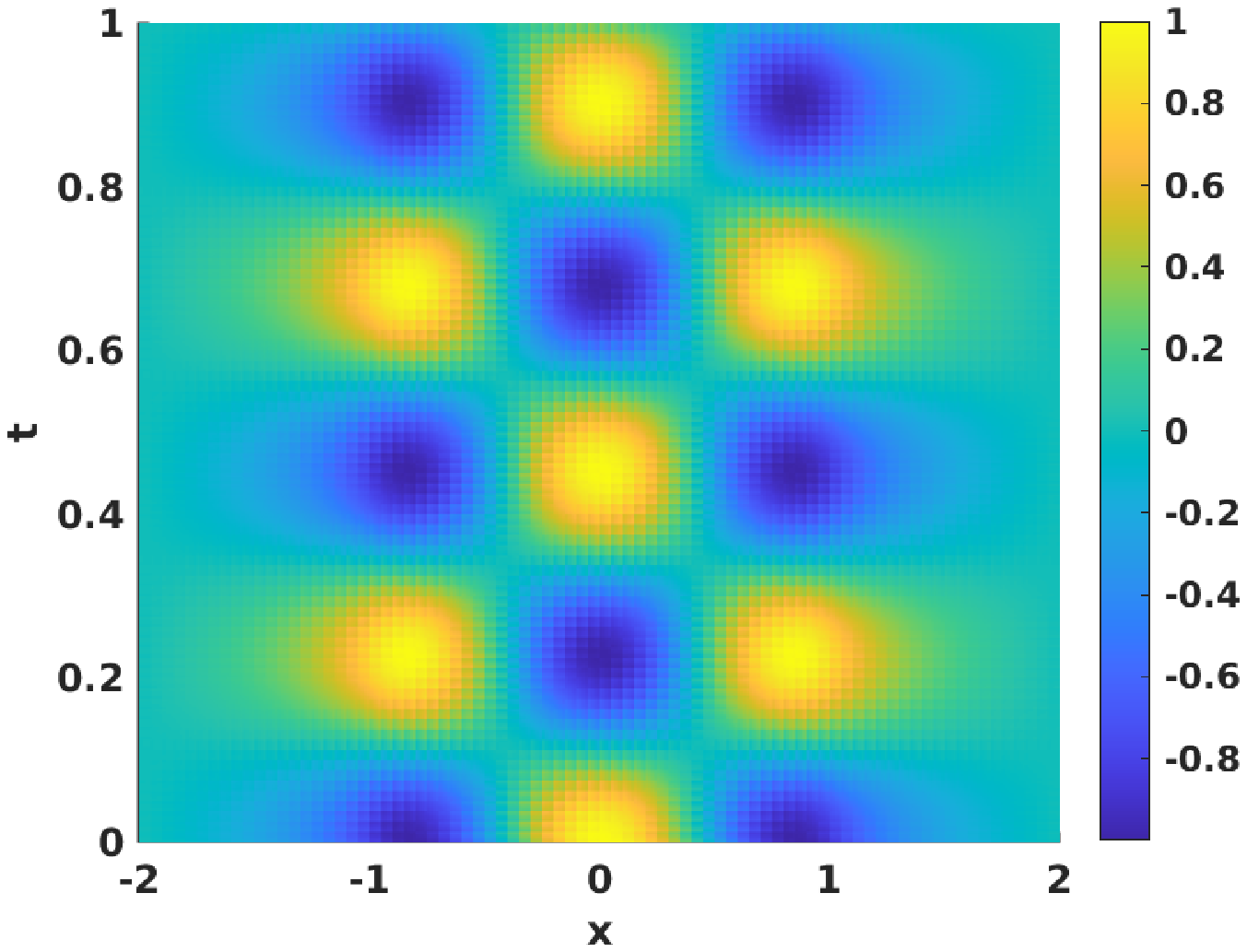}}
\subfloat[$\REL{\uhp}$ for $V_* = 50$]{\label{FIG::PLOT-V50}
\includegraphics[width=.49\textwidth, clip, trim=0 0 20 10]{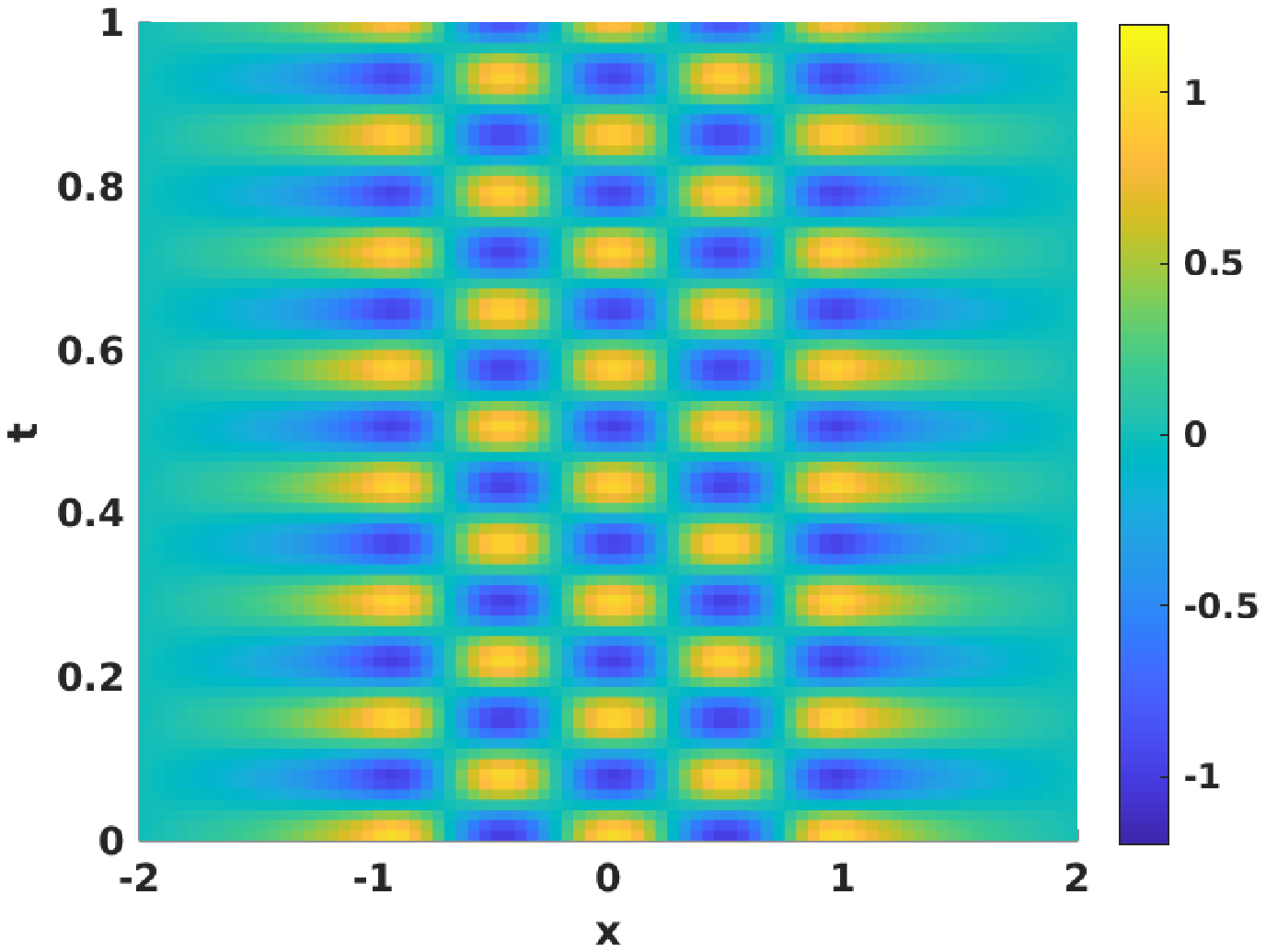}}
\caption{Trefftz-DG approximation $\psi_{hp}$ in the space--time cylinder $Q$ 
for the ($1+1$)-dimensional square-well potential problem \eqref{EQN::SQUARE-WELL} 
computed with $p = 3$.}
\end{figure}

In \Cref{FIG::1D-DG-ERROR} we 
plot the DG norm of the Galerkin error obtained for 
$V_* = 20$ and a sequence of space--time, uniform, Cartesian meshes with
$h_\bx = 0.2,\ 0.1,\ 0.0667,\ 0.05$, 
$h_t = 0.25h_\bx$ and $p=1,2,3$.

\begin{figure}[!ht]
\subfloat[Error in DG norm]{\label{FIG::1D-DG-ERROR}
\includegraphics[width=.49\textwidth, clip]{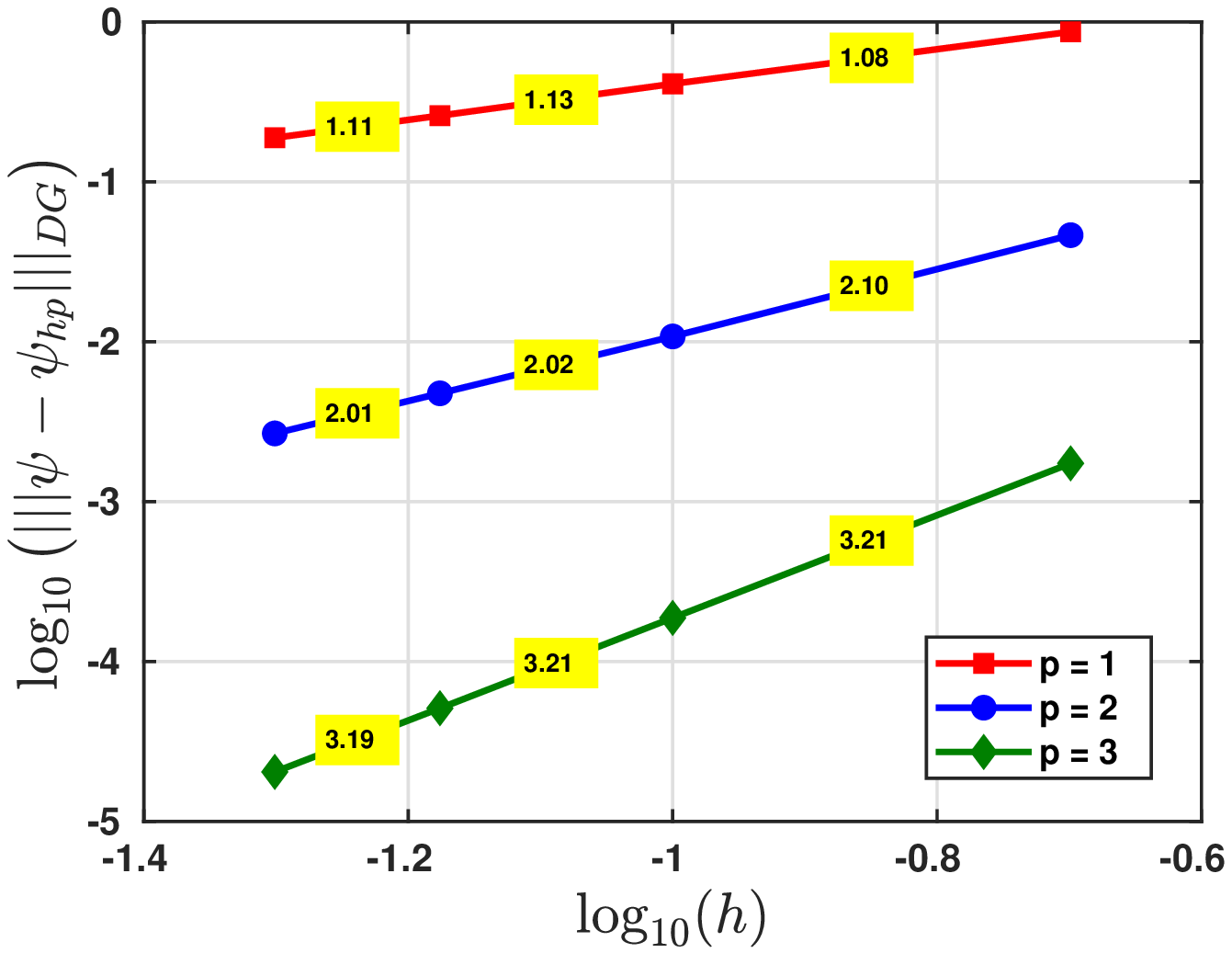}}
\subfloat[Error in $L_2$ norm at $T = 1$]{\label{FIG::1D-L2-ERROR}
\includegraphics[width=.49\textwidth, clip]{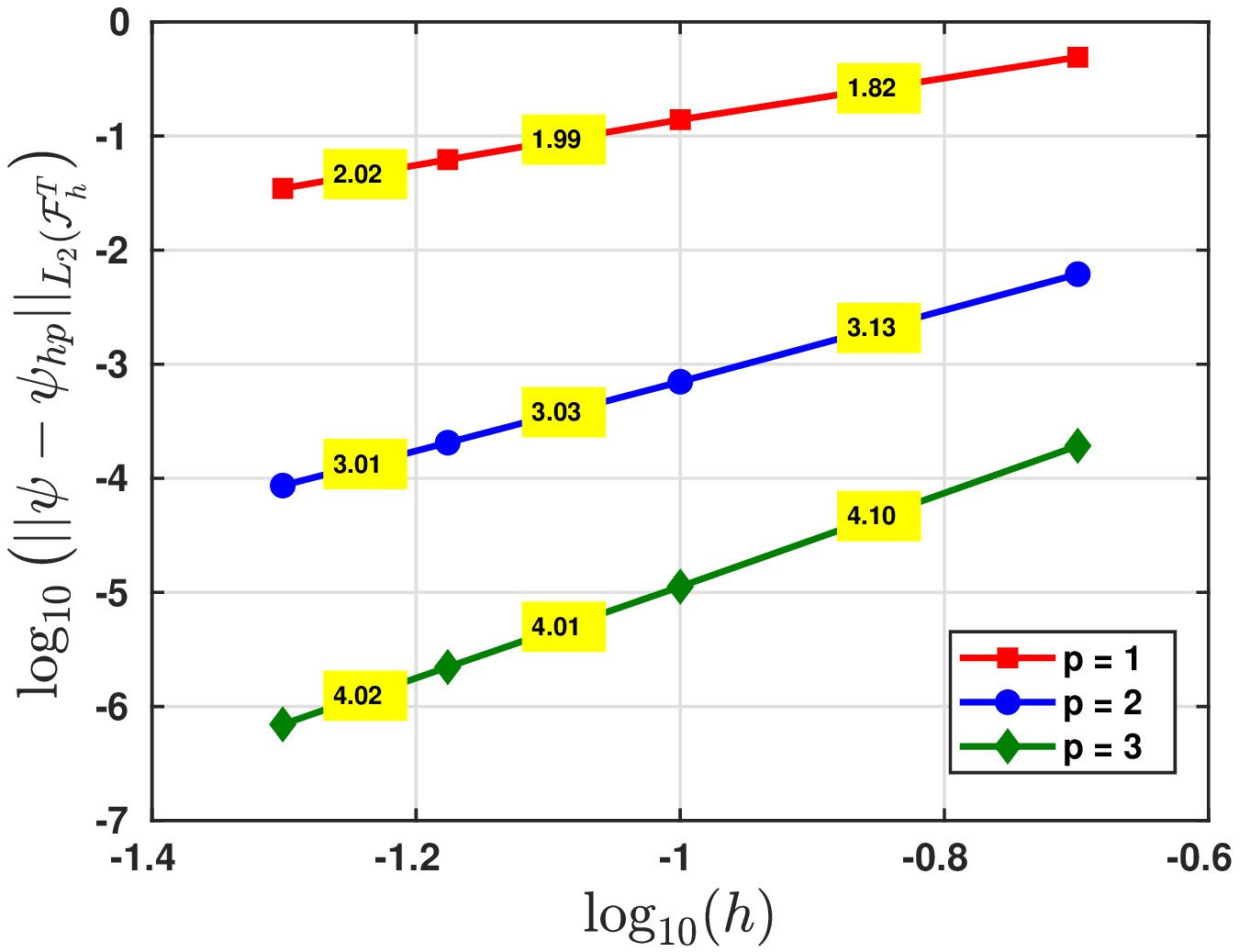}}
\caption{Trefftz-DG error for the $(1 + 1)$-dimensional problem with square 
well potential \cref{EQN::SQUARE-WELL} with $V_* = 20$. 
The numbers in the yellow rectangles are the empirical algebraic convergence 
rates in $h$.
\label{FIG::ERROR-1D}}
\end{figure}

Since we have homogeneous Dirichlet boundary conditions, the continuous model 
preserves the energy functional $\calE(t,\psi)$, recall \cref{Rem:Energy}.
In \cref{FIG::ENERGY-ERROR-d1} 
we show the time-evolution of the energy 
error for the Trefftz-DG approximation for the finest mesh , which is 
smaller for larger
$p$, as expected.
Moreover, in \cref{FIG::ENERGY-LOSS-d1} we numerically observe that $\calE_{loss}$ converges to zero as 
$\calO(h^{2p})$, as it can be proved combining \cref{THM::ERROR-ESTIMATE} 
and \cref{Rem:Energy}.

\begin{figure}
\centering
\subfloat[Energy error evolution ]{\label{FIG::ENERGY-ERROR-d1}
\includegraphics[width=.49\textwidth, clip]{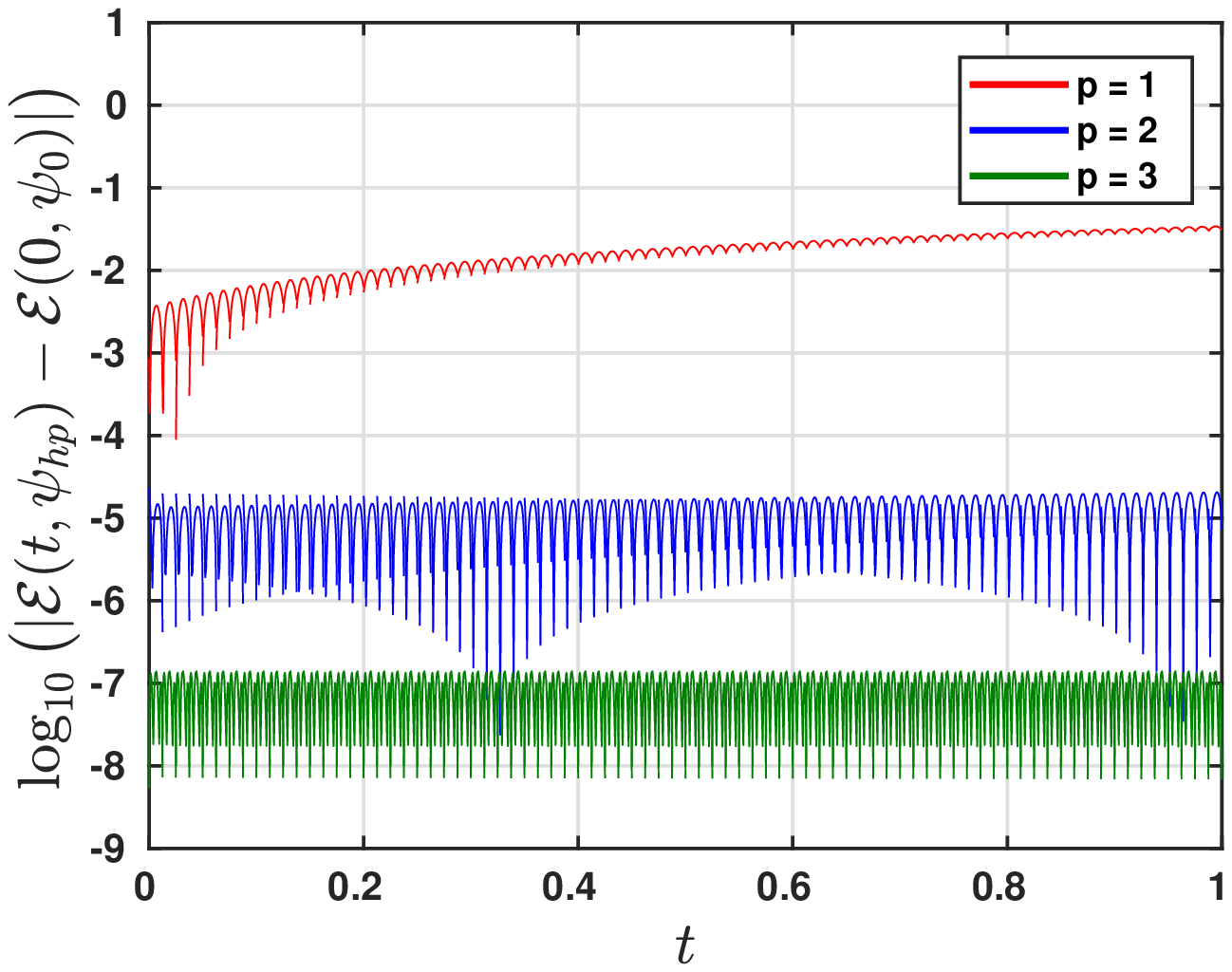}}
\subfloat[Energy loss at $T = 1$ ]{\label{FIG::ENERGY-LOSS-d1}
\includegraphics[width=.49\textwidth, clip]{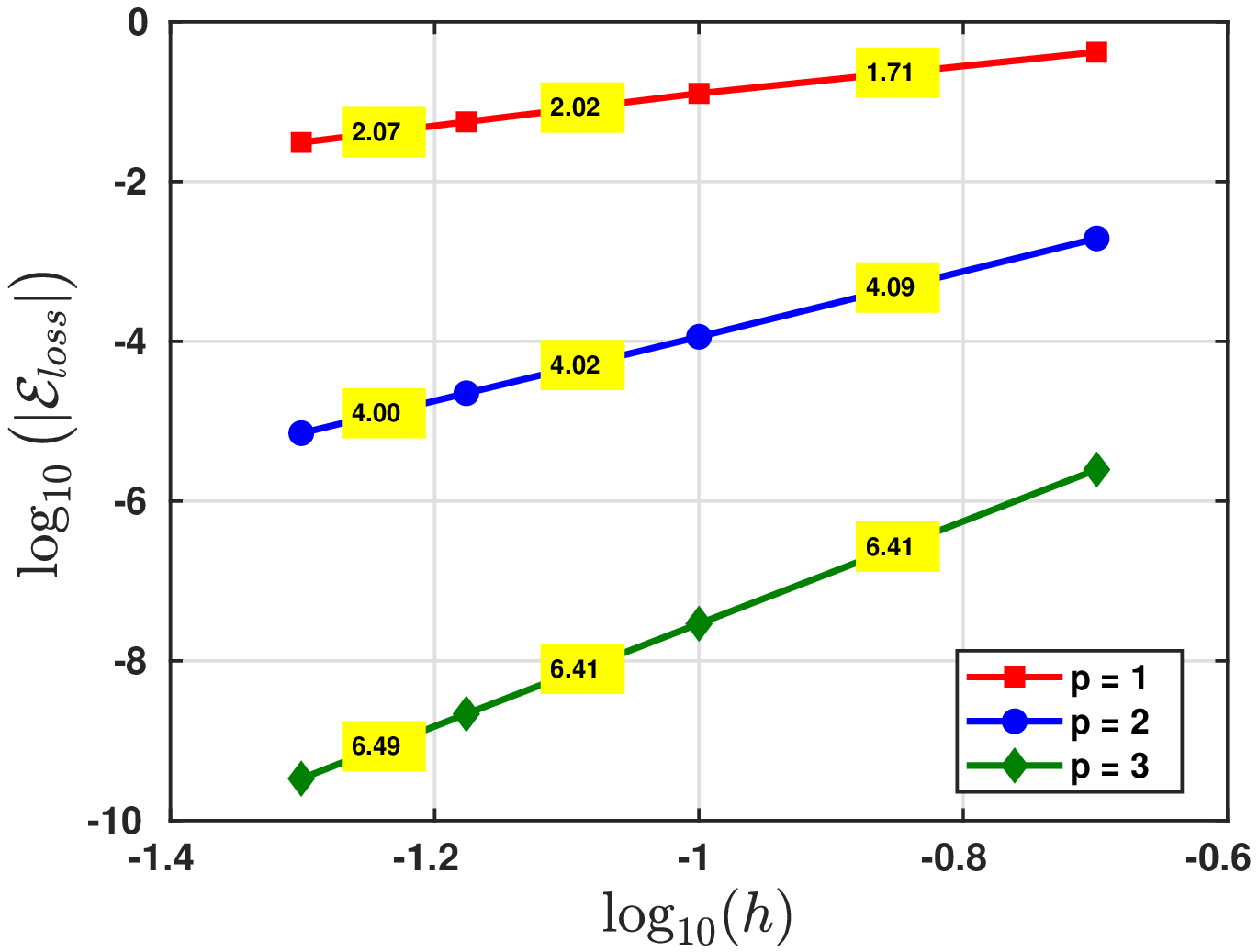}}
\caption{Time-evolution of the energy error and dependence on $h$ and $p$ for 
the problem with square-well potential and $V_* = 20$.}
\end{figure}

In order to see the effect of the choice of the parameters $k_\ell$, we first 
note that in this experiment we know the time frequency of the exact solution, 
which is $\omega = k_*^2$. Therefore it is natural to 
expect the approximation to be better if our basis functions oscillates at the same time frequency.
To numerically illustrate this, in 
\Cref{FIG::1D-DG-ERROR-V50,FIG::1D-DG-ERROR-V100} we show
that the convergence rates clearly degrade for $p = 1$, $V_* = 50$ and 
$V_*= 100$, and our previous choice of the parameters $k_\ell$,
as the time frequencies of the basis functions are too far from those of the 
exact solution. On the contrary, by taking the parameters $k_\ell$ as $\{ 
-k_*, 0, k_*\}$ we 
recover the expected rates. This clearly suggests that a sensible tuning of 
 the basis function parameters can significantly improve the accuracy of 
the method.

\begin{figure}[!ht]
\subfloat[$V_* = 50$]{\label{FIG::1D-DG-ERROR-V50}
\includegraphics[width=.49\textwidth, clip, trim=0 0 30 10]{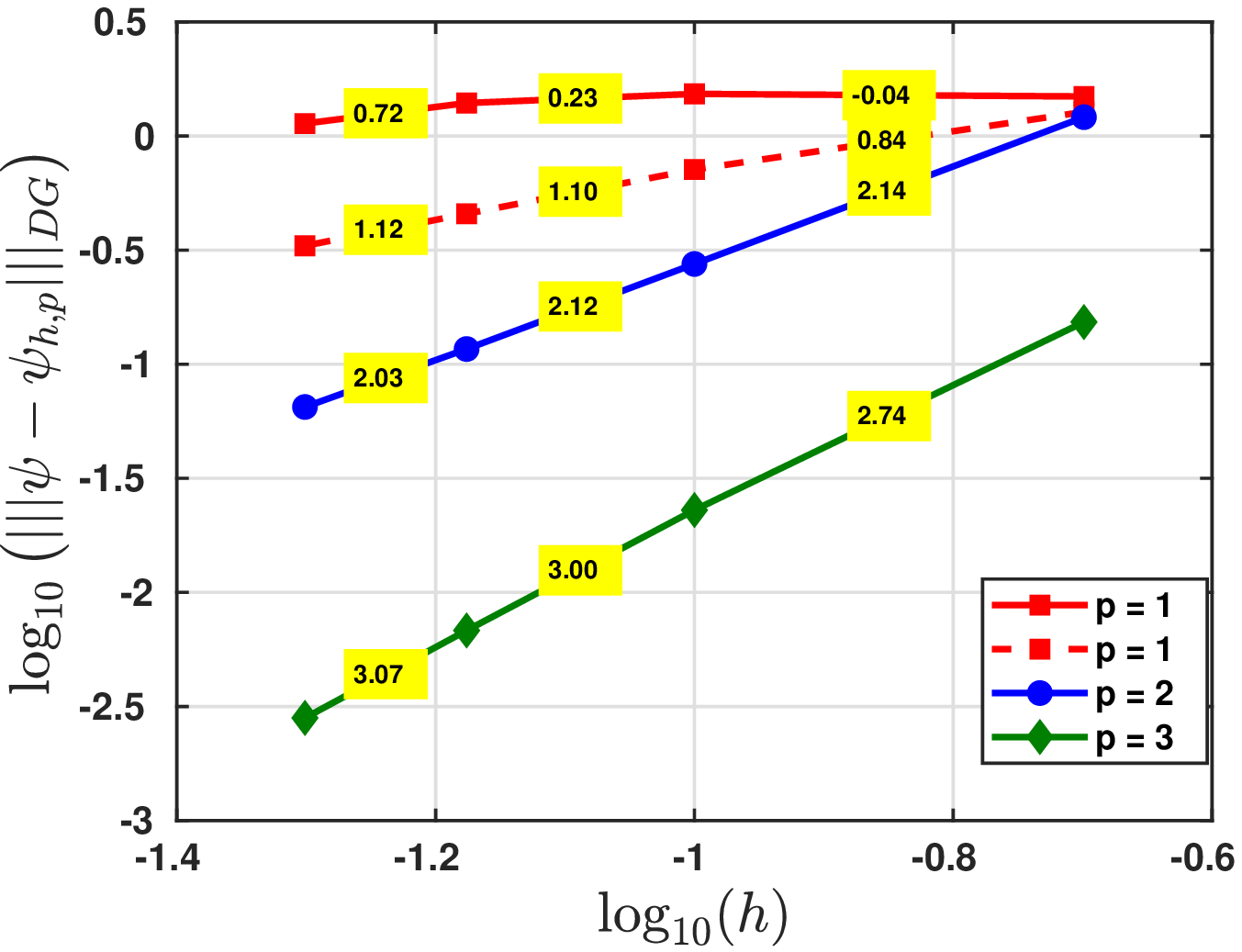}}
\subfloat[$V_* = 100$]{\label{FIG::1D-DG-ERROR-V100}
\includegraphics[width=.49\textwidth, clip, trim=0 0 30 10]{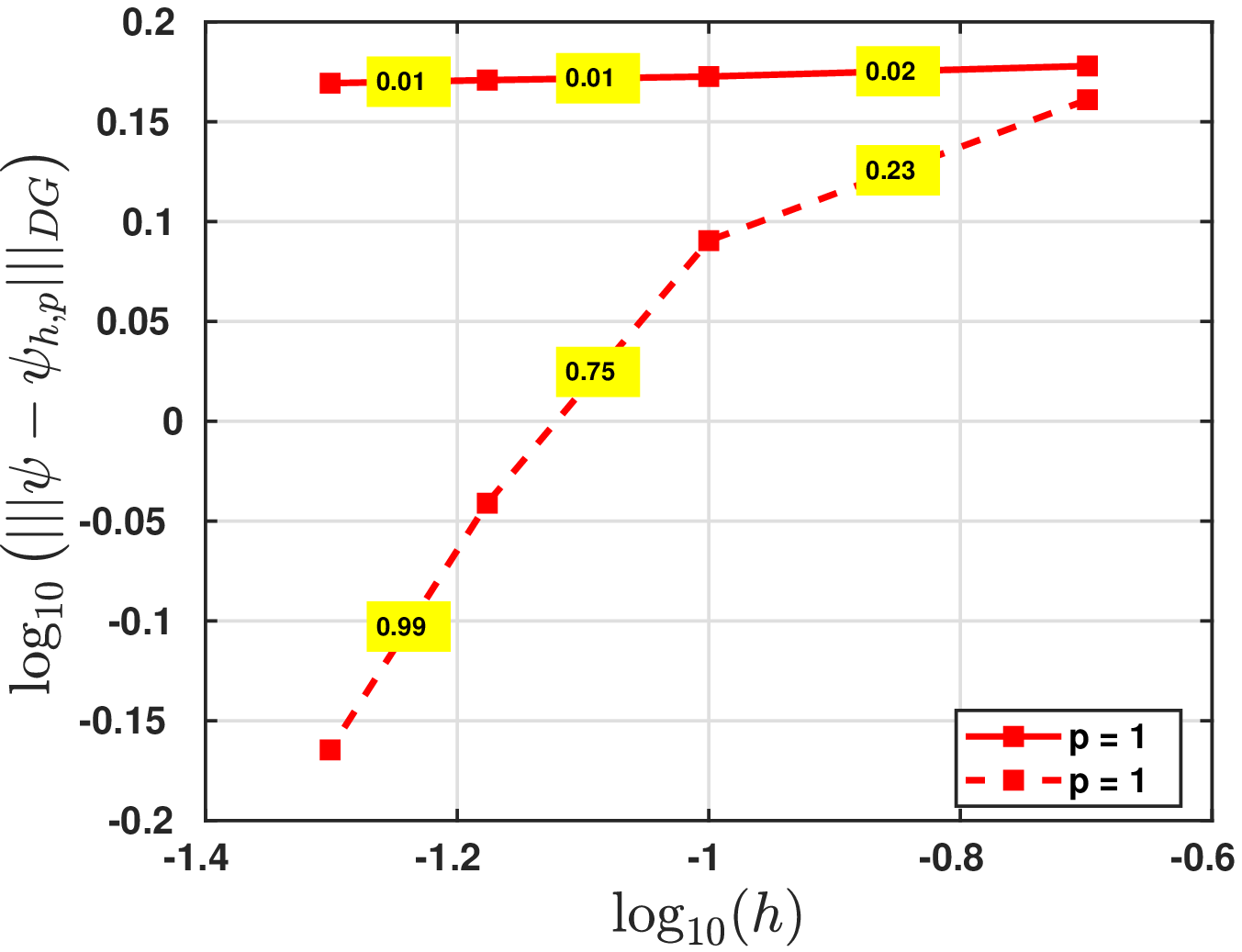}}
\caption{Trefftz-DG error measured in DG norm for the (1 
+ 1) dimensional problem with square-well potential \cref{EQN::SQUARE-WELL} 
with 
$V_* = 50\ (k_* \approx 6.6394)$ and $V_* = 100\ (k_* \approx 9.6812)$, 
and for 
$k_\ell\in\{-p,\ldots,p\}$ (continuous line), which is the same choice of the 
previous plots, and $k_\ell\in\{0,\pm k_*\}$ (dashed line).
\label{FIG::ERROR-1D-V50}}
\end{figure}

\subsection{(\texorpdfstring{$2 + 1$}{2+1})-dimensional transient Gaussian distribution}

We consider the linear Schr\"odinger equation 
\eqref{EQN::SCHRODINGER-EQUATION} with zero potential $V = 0$ on $Q = 
\Omega\times(0,2)$, with $\Omega = (-2, 4) \times (-2.5, 2.5)$.
Following \cite{Antonie_Besse_Mouysset_2004}, the initial and boundary 
conditions are chosen such that the exact solution is
$$
\psi(x, y, t) =  \frac{i}{i - 4t} \ee^{-\frac{i}{i - 4t} (x^2 + y^2 + ix + it)}.
$$
The basis function parameters are chosen as $p+1$ equally spaced space 
wavenumbers $k_{m} = 1, 2, \ldots, p + 1$, and the equally spaced angles 
$\theta_{m,\lambda} = \frac{2\pi(\lambda-1)}{2m + 1}$
in $(0, 2\pi]$, $\lambda = 1, \ldots, 2m+1$, as in \cref{fig:2Dbasis}.

In \Cref{FIG::2D-DG-ERROR} we show the convergence rates of the 
Trefftz-DG approximation for a set of structured triangular meshes generated 
by halving each rectangle in a Cartesian partition of $\Omega$ with equal 
number of divisions (20, 40, 60, 80) in both $x$ and $y$ directions.
In this experiment $h_t \approx 0.5 h_\bx$. 
The $\calO(h^p)$
convergence rates obtained are in agreement with \cref{THM::ERROR-ESTIMATE}.
Similarly to the ($1+1$)-dimensional example in the previous section, in 
\cref{FIG::2D-L2-ERROR} we observe $\calO(h^{p+1})$ convergence rates
in the $L_2\OO$ norm at the final time. Not shown here, similar results were 
obtained for rectangular meshes in space.

In \cref{FIG::ERROR-2D-DOF} we study the $p$-convergence of the method: for the 
two coarsest space meshes with $h_t \approx h_\bx/8$, and for $p\in\{1,2,3,4\}$, 
the DG norm of the error is plotted against the total number of degrees of 
freedom in $Q$.
We observe $\calO(\ee^{-b\sqrt{\#DOF}})$ convergence when the ``local degree'' $p$ is raised.
This is in strong contrast with what one might expect from a polynomial method: 
in that case only the slower rate $\calO(\ee^{-b\sqrt[3]{\#DOF}})$ can be 
achieved (recall \cref{Rem:LessDOFs}).
The Trefftz-DG approximations for $p = 3$, at the initial and final times, are 
shown in \Cref{FIG::PLOT-2D}.

\begin{figure}[!ht]
\subfloat[Error in DG norm]{\label{FIG::2D-DG-ERROR}
\includegraphics[width=.49\textwidth,clip]{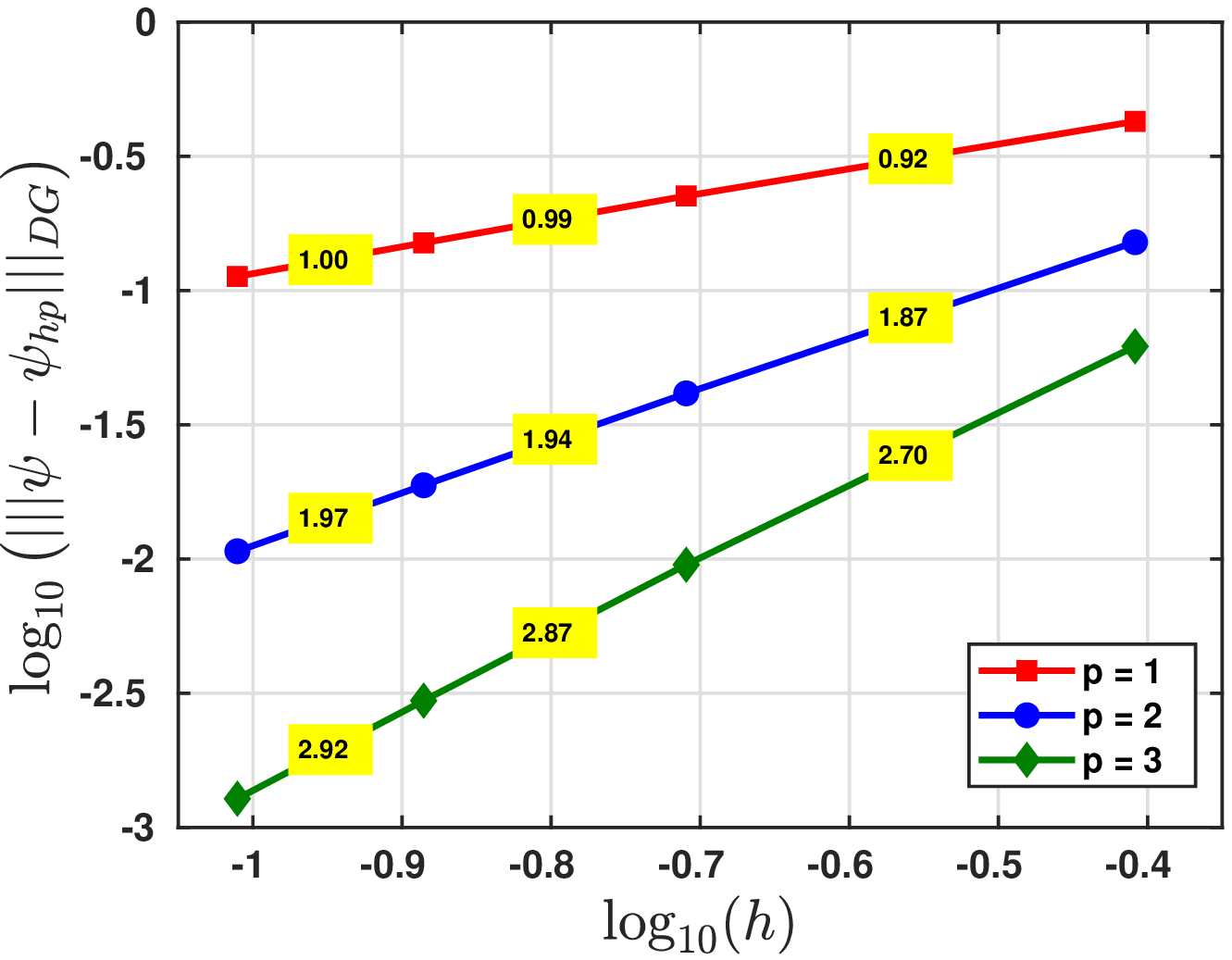}}
\subfloat[Error in $L_2$ norm at $T = 2$]{\label{FIG::2D-L2-ERROR}
\includegraphics[width=.49\textwidth,clip]{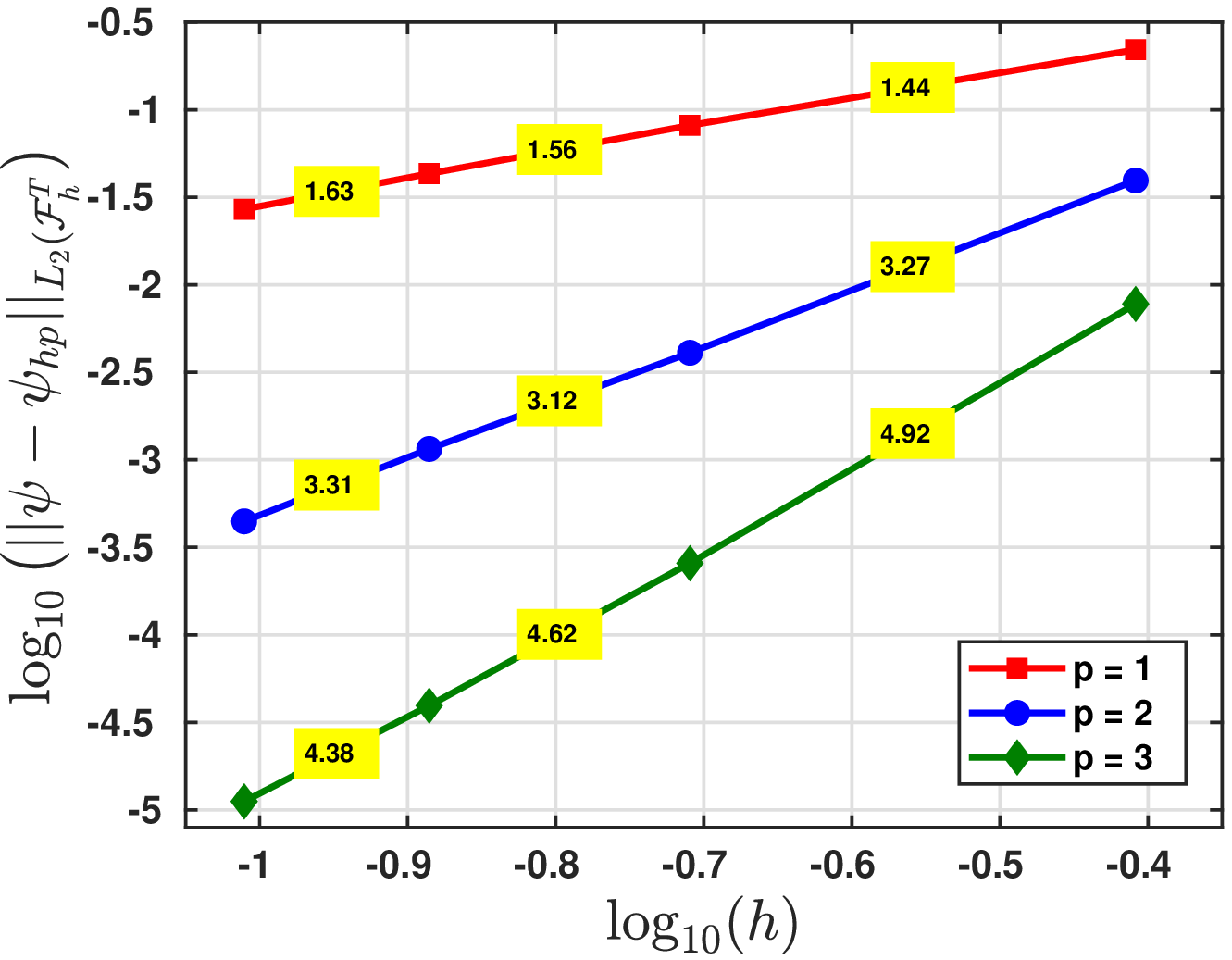}}
\caption{Trefftz-DG error  for the ($2 + 1$)-dimensional transient Gaussian problem. \label{FIG::ERROR-2D}}
\end{figure}

\begin{figure}[!ht]
\subfloat[Mesh 1, $h_\bx\approx 0.3905$]{\label{FIG::2D-DG-ERROR-DOF}
\includegraphics[width=.49\textwidth,clip]{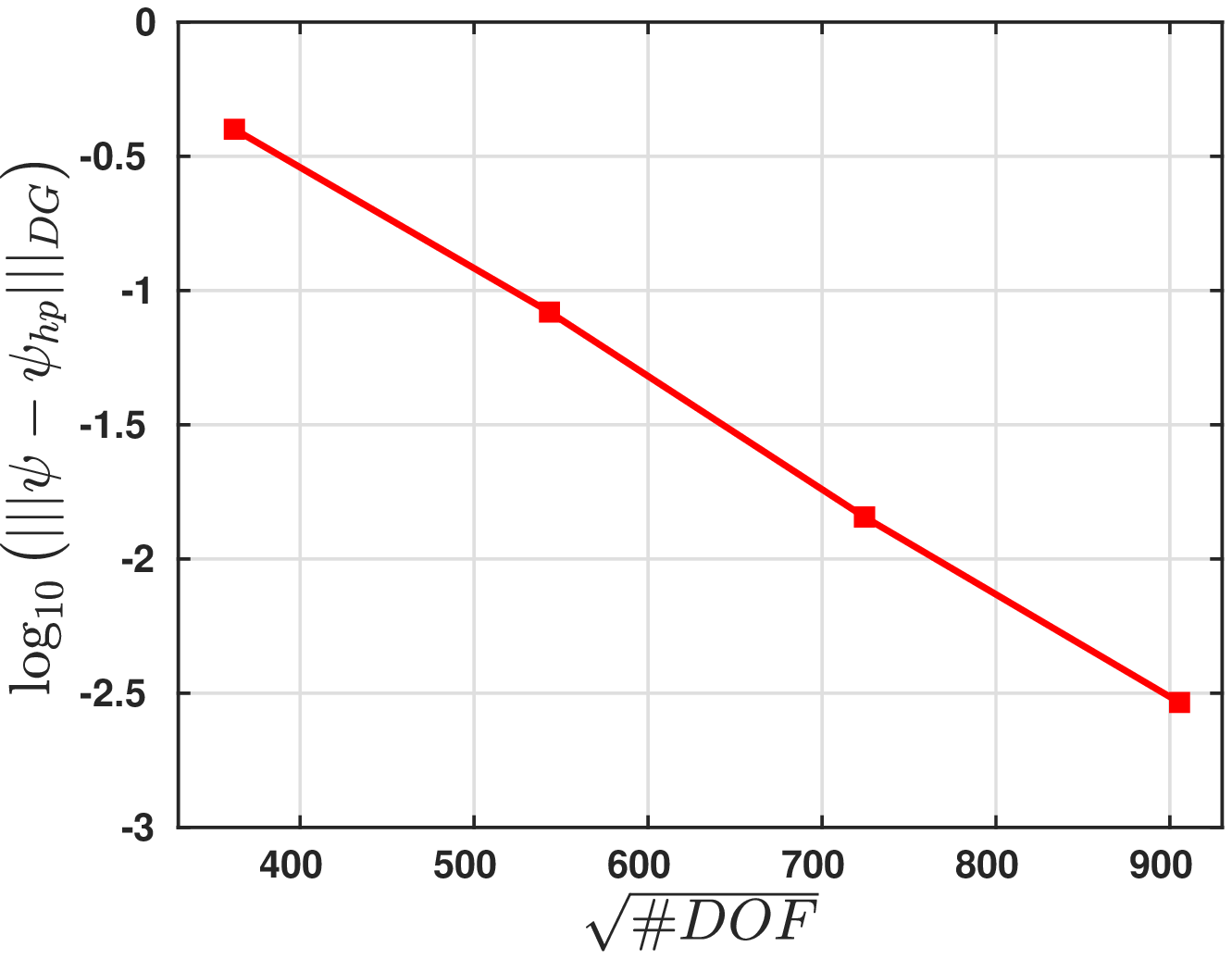}}
\subfloat[Mesh 2, $h_\bx \approx 0.1953$]{\label{FIG::2D-L2-ERROR-DOF}
\includegraphics[width=.49\textwidth,clip]{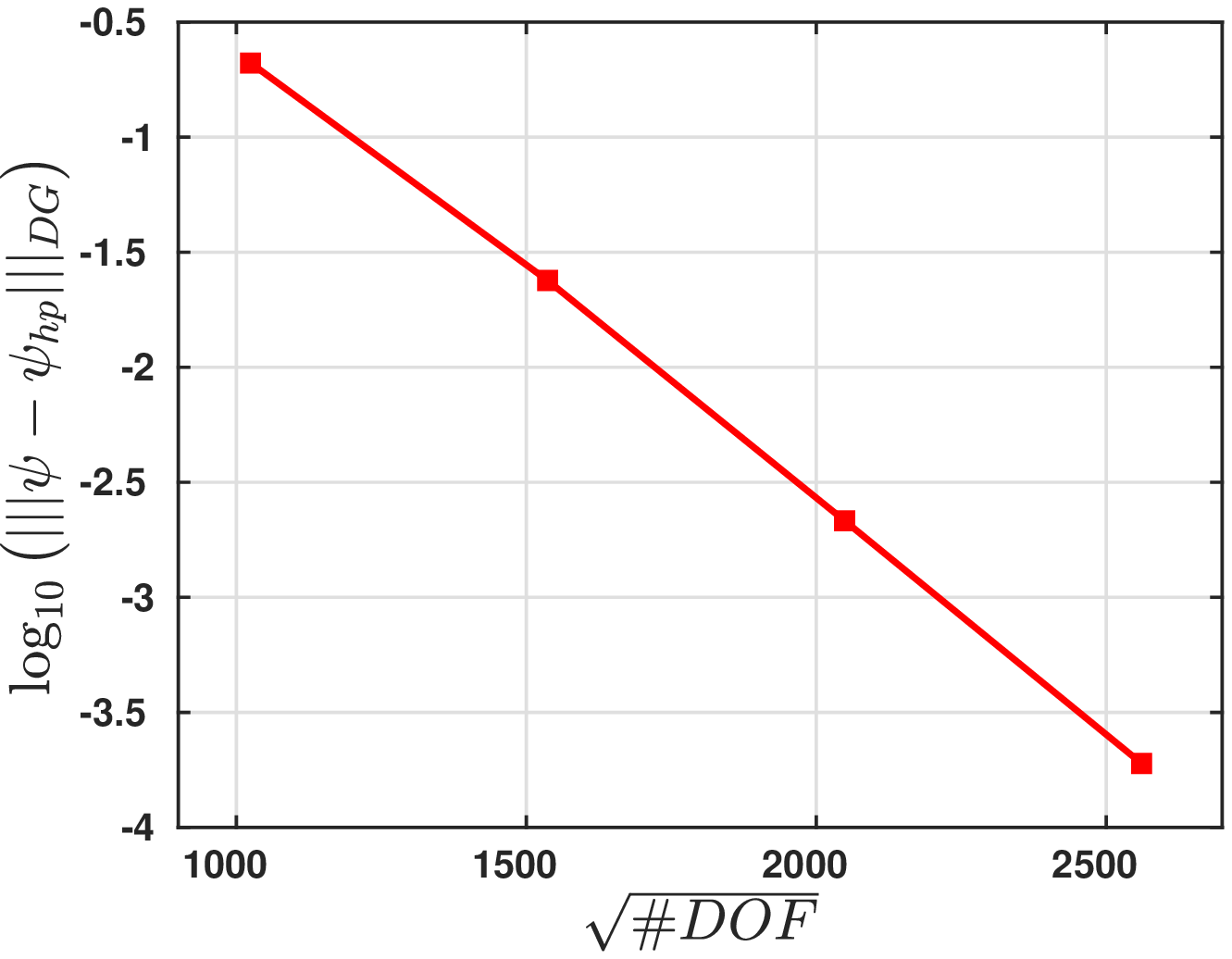}}
\caption{$p$-convergence of the Trefftz-DG error against the squared 
root of the total number of degrees of freedom for the ($2 + 1$)-dimensional 
transient Gaussian problem. \label{FIG::ERROR-2D-DOF}}
\end{figure}

\begin{figure}[!ht]
\subfloat[$t = 0$]{\label{FIG::2D-PLOT-0}
\includegraphics[width=.49\textwidth,clip,trim=0 0 20 15]{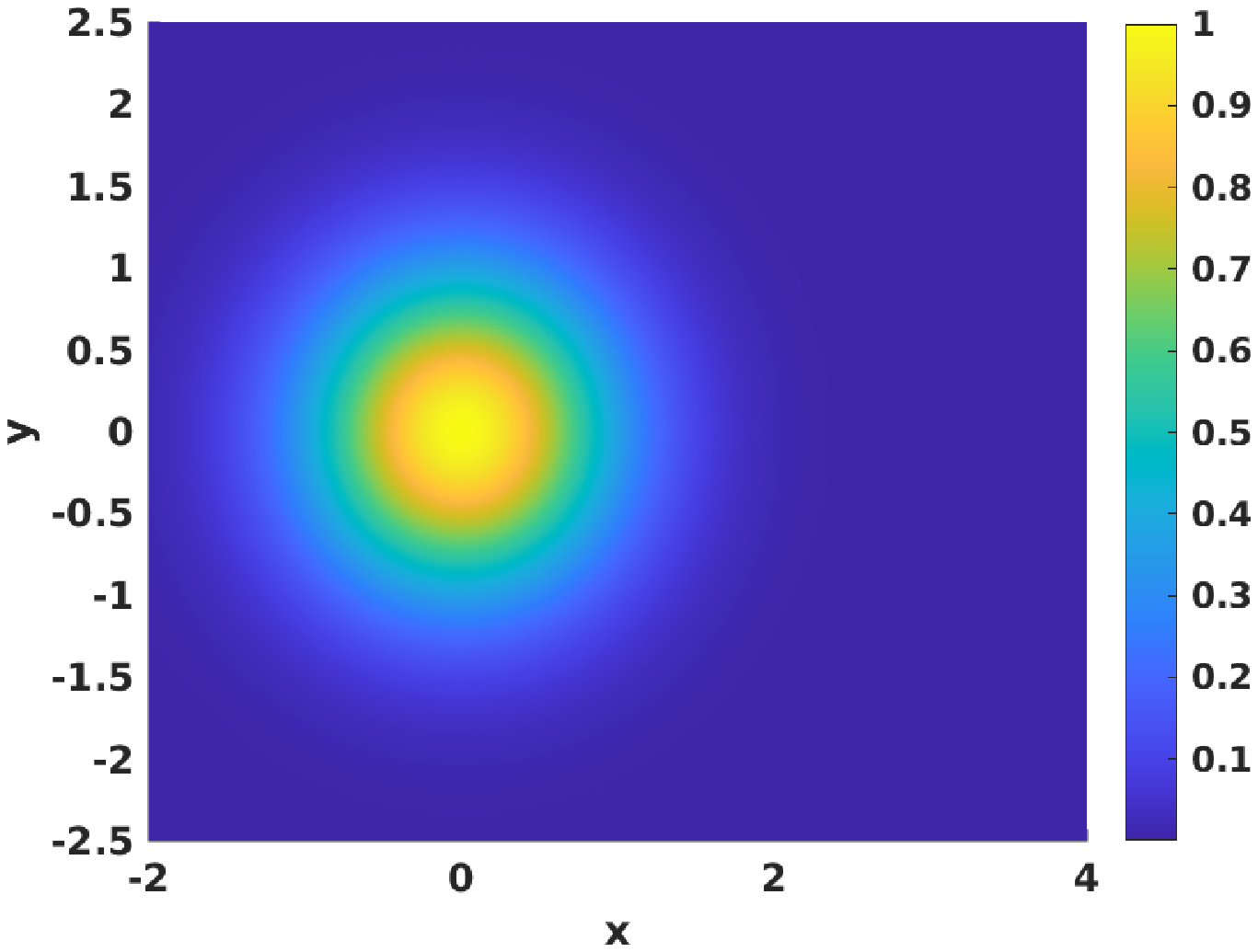}}
\subfloat[$t = 2$]{\label{FIG::2D-PLOT-F}
\includegraphics[width=.49\textwidth,clip,trim=0 0 20 15]{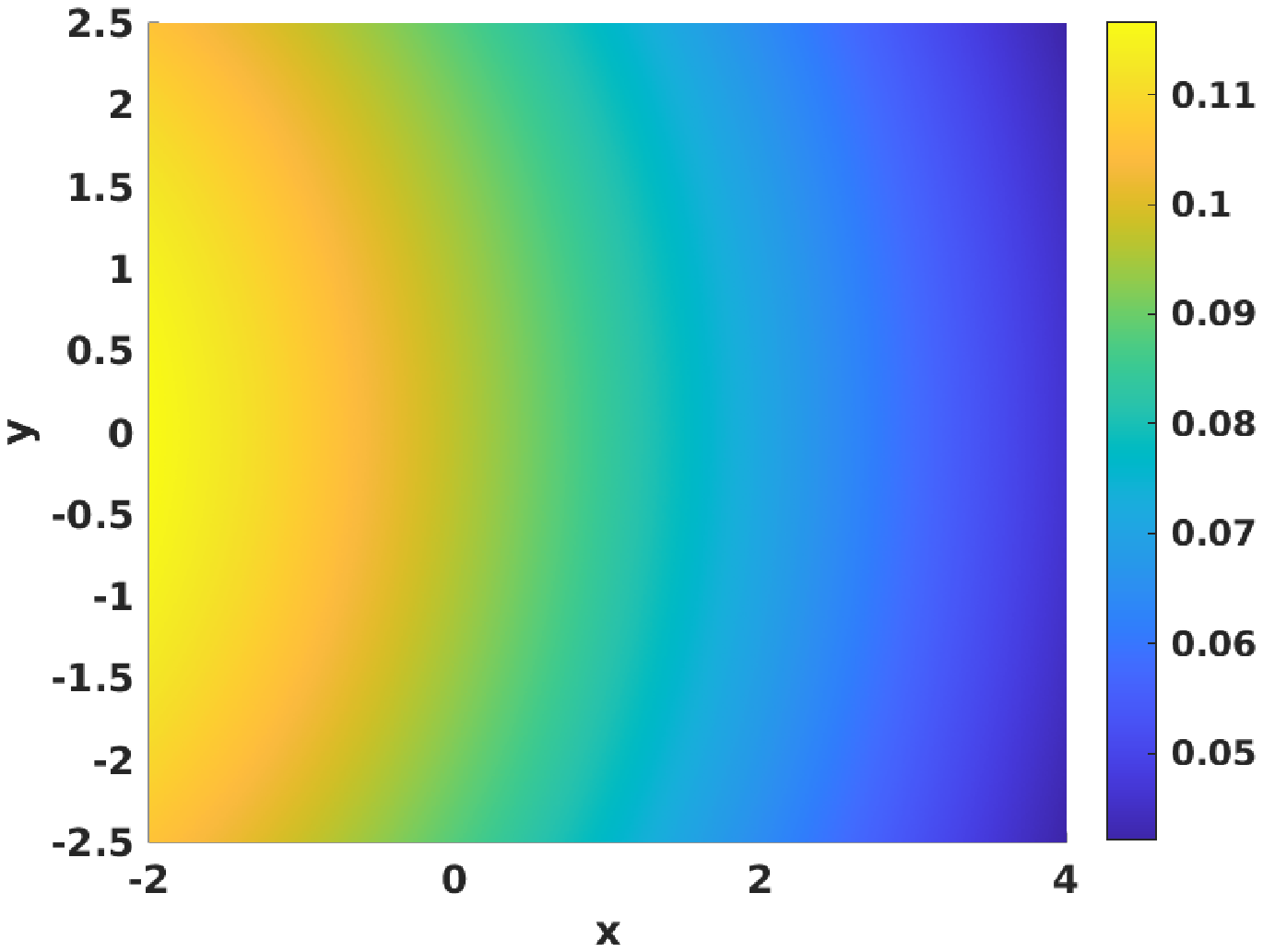}}
\caption{Trefftz-DG approximation of the ($2 + 1$)-dimensional transient Gaussian. \label{FIG::PLOT-2D}}
\end{figure}

\section{Concluding remarks\label{SECT::CONCLUSIONS}}		

We have introduced a Trefftz DG method for the approximation of the 
time-dependent, linear Schr\"odinger equation.
We have analysed its well-posedness, stability and $h$-convergence properties.
As this is the first description of such a numerical scheme,  
several extensions and improvements of the method and its analysis might be addressed.
We list here a few possible future research directions.
\begin{itemize}
\item The extension of the $h$-convergence bounds to space dimensions higher 
than~2.

\item The proof of optimal error estimates in mesh-independent norms such as $L^2(Q)$ (cf.\ the duality approach used in \cite[\S5.4]{Moiola_Perugia_2018} for the wave equation).

\item The analysis of the method in locally refined space--time meshes, as in \cite{Moiola_Perugia_2018}.

\item The $p$-convergence analysis, i.e.\ the proof of convergence rates for 
sequences of discrete spaces obtained by local enrichment on a fixed mesh (the 
Trefftz equivalent of increasing the local polynomial degrees).
Only the best-approximation bounds are missing, as the quasi-optimality bound \eqref{EQN::QUASI-OPTIMALITY} is independent of the discrete space.
This is a very challenging task, given the nature of the Trefftz basis functions.
In fact, this was accomplished for the Helmholtz equation 
\cite{Moiola_Hiptmair_Perugia_2011} but not yet for the wave equation.

\item The ``sparsification'' of the scheme, i.e.\ the combination with sparse-grid techniques in space--time to improve its efficiency, e.g.\ along the lines of \cite{Bansal_Moiola_Perugia_Schwab_2020}.

\item The extension to non-piecewise-constant potentials $V$, in particular to smooth potentials that are relevant for applications (e.g.\ Coulomb-interaction potentials in quantum mechanics).
Useful tools towards this goal are the ``generalized plane waves'' developed for 
the Helmholtz equation in \cite{ImbertGerard_Despres_2014} and subsequent 
papers, and the closely related ``quasi-Trefftz'' approach developed for the 
wave equation in~\cite{ImbertGerard_Moiola_Stocker}.

\item Efficient implementations, e.g.\ using the closed-form integration 
of~\cite[\S4.1]{Hiptmair_Moiola_Perugia_2016}.

\item A more accurate analysis of the Trefftz discrete spaces in order to optimise the choice of the parameters ($k_\ell$ and $\bd_\ell$ in \eqref{EQN::BASIS-FUNCTIONS}), improving conditioning, robustness and accuracy.
Basis different from complex exponentials might also be devised and analysed.
The poor conditioning of the time-stepping matrix is likely to be the main bottleneck for the use of the proposed scheme in demanding applications: the Trefftz technologies recently developed for time-harmonic wave problems might greatly help under this respect.

\item The extension to initial boundary value problems with non-reflecting boundary conditions, which are often used to truncate unbounded domains (e.g.\ \cite{Antonie_Besse_Mouysset_2004}).
Under this respect the Trefftz approach is promising as it {allows for} the selection of outward-propagating basis functions on boundary cells, as in \cite{Egger_Kretzchmar_Schnepp_Tsukerman_Weiland_2015}.
\end{itemize}

\bibliographystyle{plain}

\end{document}